\newtheorem{thm}{Theorem}[section]
\newtheorem*{thm*}{Theorem}
\newtheorem{lem}[thm]{Lemma}
\newtheorem{prop}[thm]{Proposition}
\newtheorem{rmk}[thm]{Remark}
\newtheorem{definition}[thm]{Definition}
\DeclareMathOperator{\dist}{dist}
\DeclareMathOperator{\cof}{cof}
\DeclareMathOperator{\supp}{supp}
\DeclareMathOperator{\interior}{int}
\newcommand{\RR}{\mathbb{R}}     % for real numbers
\newcommand{\NN}{\mathbb{N}}     % for positive integers
\newcommand{\E}{\mathcal{E}}
\newcommand{\J}{\mathcal{J}}  
\newcommand{\e}{\varepsilon}
\renewcommand{\epsilon}{\varepsilon}
\newcommand{\weakstarto}{\overset{\ast}{\rightharpoonup}}
\newcommand{\abs}[1]{\left| #1 \right|}
\newcommand{\norm}[2][]{\left\| #2 \right\|_{#1}}
\newcommand{\inner}[3][]{\left\langle #2,#3\right\rangle_{#1}}
\newcommand{\tang}[2]{T_{#1}(#2)}
\newcommand{\regtang}[2]{\widehat T_{#1}(#2)}
\newcommand{\connormal}[2]{\overline N_{#1}(#2)}
\newcommand{\contact}[1]{C_{#1}}
\newcommand{\Adm}[2]{T_{#1}(\mathcal{E} \ifx&#2& \else \cap #2 \fi)}
\newcommand{\Admr}[2]{T^0_{#1}(\mathcal{E} \ifx&#2& \else \cap #2 \fi)}
\numberwithin{equation}{section} % to number equations by section
\begin{document}

% \nocite{}

\title[]{Inertial (self-)collisions of viscoelastic solids with Lipschitz boundaries}
\author[] {Anton\'in \v{C}e\v{s}\'ik}
\address{Department of Mathematical Analysis\\Faculty of Mathematics and Physics\\ 
Charles University\\Prague\\ Czech Republic}
\email {cesik@karlin.mff.cuni.cz}
\author[] {Giovanni Gravina} 
\address{School of Mathematical and Statistical Sciences, Arizona State University, Tempe, Arizona, USA}
\email {ggravina@asu.edu} 
\author[] {Malte Kampschulte}
\address{Department of Mathematical Analysis\\Faculty of Mathematics and Physics\\ 
Charles University\\Prague\\ Czech Republic}
\email {kampschulte@karlin.mff.cuni.cz}

\begin{abstract}
We continue our study, started in \cite{Cesik2022}, of (self-)collisions of viscoelastic solids in an inertial regime. We show existence of weak solutions with a corresponding  contact force measure in the case of solids with only Lipschitz-regular boundaries. This necessitates a careful study of different concepts of tangent and normal cones and the role these play both in the proofs and in the formulation of the problem itself. Consistent with our previous approach, we study contact without resorting to penalization, i.e.\@ by only relying on a strict non-interpenetration condition. Additionally, we improve the strategies of our previous proof, eliminating the need for regularization terms across all levels of approximation.
\end{abstract}

\maketitle
% \tableofcontents

% \keywords{}

\section{Introduction}

As in our previous paper \cite{Cesik2022}, we consider the (self-)collision of viscoelastic solids under the influence of inertia. That is, we are interested in showing the existence of weak solutions to

\begin{align}
 \label{eq:strongSolution}
 \rho \partial_{tt} \eta + DE(\eta) + D_2R(\eta,\partial_t \eta) = f+\sigma
\end{align}
where $\eta \colon Q \to \RR^n$ is a deformation of a reference configuration, $\rho$ is the mass density, $E$ and $R$ denote elastic energy and dissipation potential, respectively, $f$ is a given force, and $\sigma$ is a solution-dependent contact force. This force purely derives from a non-interpenetration condition, i.e.\@ from the assumption that all deformations are restricted to some container $\Omega$ and are injective almost everywhere.

While also in our previous work we did not prescribe the topology or the general shape of the solids, we restricted ourselves to the case of smooth (i.e.\@ $C^{1,\alpha}$) boundaries. This was a natural class, compatible with the regularity imposed on deformations by the type of second order elastic energies considered there. 

However, the main reason for this restriction was that it not only greatly simplifies the type of contact that can occur, but also gave us a unique, well-defined normal vector at every point of the boundary. Furthermore, this normal vector was not only continuous along the boundary, but also stable with respect to convergence of deformations, two properties we relied on heavily in our proofs.

It is worth mentioning that the study of the static minimization case has a long history, and for this it has long been known how to properly generalize obstacle problems to encompass less regular, i.e.\@ Lipschitz, boundaries (see, for example, \cite{Palmer2018}; see also \cite{Schuricht2002}).

A way to deal with non-smooth boundaries is to work with normals and tangents in a generalized sense. Specifically, variational analysis offers different notions of generalized tangents and normals to any set in $\RR^n$, cf.\@ \cite{Rockafellar1998}. This is the approach that we take and which will be discussed in \Cref{sec:tangent-normal}.
An alternative would be to use the Clarke subdifferential for Lipschitz functions (see \cite{Clarke1987}), and through it define generalized tangents and normals for Lipschitz sets.

The aim of this paper is now to apply those considerations to the dynamic case using the time-delayed approximations developed in \cite{Benesova2020}. This generalization comes with non-trivial analytical challenges. Indeed, not only does it require a more nuanced discussion of contact forces compared to \cite{Cesik2022}, but in addition, a crucial step of the proof (the time-delayed energy inequality), which relied on testing with the time-derivative, can now no longer be obtained in the same way (see Remark \ref{rmk:energyEst}). We instead have to employ a completely different strategy to obtain a similar estimate.

With this, the outline of this paper is as follows: In the next section, we introduce our assumptions and state the main results. Sections \ref{sec:tangent-normal}-\ref{sec:adm} will then be concerned with introducing notation and properties of tangent and normal cones, contact forces, and admissible test functions, respectively. In Section \ref{sec:qs} we will show existence of weak solutions for the quasistatic problem as an intermediate step. Finally, in Section \ref{sec:inertial} we will then use the results from the previous section as a building block for proving the main theorem, thus establishing the existence of weak solutions for the full problem with inertia.

\subsection*{Acknowledgements}
 A.\v{C}.\@ and M.K.\@ acknowledge the support of the the ERC-CZ grant LL2105, and the Czech Science Foundation (GA\v{C}R) under grant No.\@ 23-04766S.  A.\v{C}.\@ further acknowledges the support of Charles University, project GA UK No.\@ 393421.

\section{Assumptions and main results}

\subsection{Viscoelastic solids}

The assumptions we impose on the solid mirror those found in \cite{Cesik2022}, with the key difference that in order to deal with corners and related phenomena, in this paper we expanded our scope to encompass general Lipschitz boundaries, instead of requiring them to be of class $C^{1,\alpha}$. While we will reintroduce the definitions for clarity, we refer the reader to \cite{Cesik2022} for a more detailed discussion of the various aspects that are not directly tied to the lowered regularity of $\partial Q$.

The solid body will be described in Lagrangian coordinates by a (time dependent) deformation of a reference configuration $Q \subset \RR^n$, denoted by $\eta \colon [0, T] \times Q \to \Omega$. The set $Q \subset \RR^n$ will be a Lipschitz, bounded domain, or alternatively a disjoint union of finitely many of such domains. The confining set $\Omega \subset \RR^n$ will similarly be a (possibly unbounded) Lipschitz domain. The class of admissible deformations consists of maps that satisfy the Ciarlet--Ne\v{c}as condition \cite{Ciarlet1987} and are thus injective almost everywhere. To be precise, we set
\begin{equation}
\label{domain-def}
\E \coloneqq \left\{\eta \in W^{2, p}(Q; \RR^n) : \eta(Q) \subset \Omega, \eta|_\Gamma = \eta_0, \det \nabla \eta > 0, \text{ and } \mathcal{L}^n(\eta(Q)) = \int_{Q}\det \nabla \eta(x)\,dx \right\}.
\end{equation}
Here we use $\eta_0$ to denote a given admissible (initial) deformation, $\mathcal{L}^n$ is the Lebesgue measure and $\Gamma$ is a (fixed) measurable subset of $\partial Q$, not necessarily with positive measure, i.e.\ possibly empty. Furthermore, we require $\eta_0|_\Gamma$ to be injective and $\eta_0(\Gamma) \cap \partial \Omega = \emptyset$ (see \cite[Remark 2.3]{Cesik2022} for more information). Here and in the following we assume that $p > n$. Thus we can assume that $\eta \in C^{1, 1 - \frac{n}{p}}(\overline Q;\RR^n)$ for all $\eta\in \mathcal E$ and always work with this precise representative.

In order to simplify the notation, in the following we write $A\langle u \rangle$ as a shorthand for the duality pairing $\langle A, u\rangle_{(W^{k, p})^* \times W^{k, p}}$, whenever $A \colon W^{k, p}(Q; \RR^n) \to \RR$ is linear and $u \in W^{k, p}(Q; \RR^n)$. Moreover, we use the subscript $\Gamma$ to denote spaces of functions whose trace vanishes on that set, e.g.\@ $W^{k,p}_\Gamma(Q) \coloneqq \{u \in W^{k,p}(Q) : u|_\Gamma = 0\}$.

Next, we specify the assumptions on the energy-dissipation pair $(E, R)$. We assume that the elastic energy $E \colon W^{2,p}(Q; \RR^n) \to (-\infty, \infty]$ has the following properties: 
\begin{enumerate}[label=(E.\arabic*), ref=E.\arabic*]
\item \label{E1} There exists $E_{\min} > - \infty$ such that $E(\eta) \ge E_{\min}$ for all $\eta \in W^{2, p}(Q; \RR^n)$. Moreover, $E(\eta) < \infty$ for every $\eta \in W^{2, p}(Q; \RR^n)$ with $\inf_Q\det \nabla \eta > 0$.
\item \label{E2} For every $E_0  \ge E_{\min}$ there exists $\e_0 > 0$ such that $\det \nabla \eta \ge \e_0$ on $Q$ for all $\eta$ with $E(\eta) \le E_0$.
\item \label{E3} For every $E_0 \ge E_{\min}$ there exists a constant $C$ such that 
\[
\|\nabla^2 \eta\|_{L^p} \le C 
\]
for all $\eta$ with $E(\eta) < E_0$.
\item \label{E4} $E$ is weakly lower semicontinuous, that is, 
\[
E(\eta) \le \liminf_{k \to \infty} E(\eta_k)
\]
whenever $\eta_{k} \rightharpoonup \eta$ in $W^{2, p}(Q; \RR^n)$. Moreover, $E$ is continuous with respect to strong convergence in $W^{2, p}(Q; \RR^n)$.
\item \label{E5} $E$ is differentiable in its effective domain with derivative $DE(\eta) \in (W^{2, p}(Q; \RR^n))^*$ given by 
\[
 DE(\eta) \langle \varphi \rangle = \left.\frac{d}{d\e} E(\eta + \e \varphi)\right|_{\mathrlap{\e = 0}}. 
\]
Furthermore, $DE$ is bounded on any sub-level set of $E$ and $DE(\eta_k) \langle \varphi \rangle \to DE(\eta)\langle \varphi \rangle$ whenever $\eta_k \to \eta$ in $W^{2, p}(K; \RR^n)$ for all $K$ compactly contained in $\overline{Q}$ with $\dist(K, \Gamma) > 0$ and $\varphi \in W^{2, p}_{\Gamma}(Q; \RR^n)$.

\item \label{E6} $DE$ satisfies 
\[
\liminf_{k \to \infty}  (DE(\eta_k) - DE(\eta)) \langle (\eta_k - \eta)\psi \rangle \ge 0
\]
for all $\psi \in C^{\infty}_{\Gamma}(Q; [0, 1])$ and all sequences $\eta_k \rightharpoonup \eta$ in $W^{2, p}(Q; \RR^n)$. In addition, $DE$ satisfies the following Minty-type property: If 
\[
\liminf_{k \to \infty} (DE(\eta_k) - DE(\eta))\langle (\eta_k - \eta) \psi \rangle \le 0
\]
for all $\psi \in C^{\infty}_{\Gamma}(Q; [0, 1])$, then $\eta_k \to \eta$ in $W^{2, p}(K; \RR^n)$ for all $K$ compactly contained in $\overline{Q}$ with $\dist(K, \Gamma) > 0$.
\end{enumerate}

Additionally, we assume that the dissipation potential $R \colon W^{2, p}(Q; \RR^n) \times W^{1, 2}(Q; \RR^n) \to [0, \infty)$ is function satisfying the following properties:

\begin{enumerate}[label=(R.\arabic*), ref=R.\arabic*]
\item \label{R1} $R$ is weakly lower semicontinuous in its second argument, that is, for all $\eta \in W^{2, p}(Q; \RR^n)$ and every $b_{k} \rightharpoonup b$ in $W^{1, 2}(Q; \RR^n)$ we have that
\[
R(\eta, b) \le \liminf_{k \to \infty} R(\eta, b_k).
\]
\item \label{R2} $R$ is convex and homogeneous of degree $2$ with respect to its second argument, that is, 
\[
R(\eta, \lambda b) = \lambda^2 R(\eta, b)
\]
for all $\lambda \in \RR$.
\item \label{R3} $R$ admits the following Korn-type inequality: For any $\e_0 > 0$, there exists $K_R$ such that
\[
K_R \|b\|_{W^{1,2}}^2 \le \|b\|_{L^2}^2 +  R(\eta, b)
\]
for all $\eta \in \mathcal{E}$ with $\det \nabla \eta \geq \e_0$ and all $b \in W^{1, 2}(Q; \RR^n)$. 
\item \label{R4} $R$ is differentiable in its second argument, with derivative $D_2R(\eta, b) \in (W^{1, 2}(Q; \RR^n))^*$ given by
\[
 D_2R(\eta, b)\langle \varphi \rangle \coloneqq \left.\frac{d}{d\e} R(\eta, b + \e \varphi)\right|_{\mathrlap{\e = 0}}. 
\]
Furthermore, the map $(\eta, b) \mapsto D_2R(\eta, b)$ is bounded and weakly continuous with respect to both arguments, that is, 
\[
\lim_{k \to \infty}  D_2R(\eta_k, b_k) \langle \varphi \rangle =  D_2R(\eta, b) \langle \varphi \rangle
\]
holds for all $\varphi \in W^{1, 2}(Q; \RR^n)$ and all sequences $\eta_k \rightharpoonup \eta$ in $W^{2, p}(Q; \RR^n)$ and $b_k \rightharpoonup b$ in $W^{1, 2}(Q; \RR^n)$.
\end{enumerate}
We also introduce a variant of \eqref{R3} that will be used for the quasistatic case in the form of
\begin{enumerate}[label=(R.3\textsubscript{q}), ref=R.3\textsubscript{q}]
 \item \label{R3q} $R$ admits the following Korn-type inequality: For any $\e_0 > 0$, there exists $K_R$ such that
\[
K_R \|b\|_{W^{1,2}}^2 \le  R(\eta, b)
\]
for all $\eta \in \mathcal{E}$ with $\det \nabla \eta \geq \e_0$ and all $b \in W^{1,2}_\Gamma(Q;\RR^n)$.
\end{enumerate}

We mention here that the assumptions on the energy-dissipation pair are standard within the framework of second-order viscoelastic materials (see in particular \cite{Healey2009}, \cite{kromerQuasistaticViscoelasticitySelfcontact2019}, and the references therein).

Finally, as in \cite[Remark 2.1]{Cesik2022} we note that these assumptions imply the following:
 \begin{align} \label{eq:R1homog}
  D_2R(\eta,\lambda b) &= \lambda D_2R(\eta,b) \\
  \label{eq:D2Rgrowthbounds}
  \norm[(W^{1,2})^*]{D_2R(\eta,b)} &\leq C \norm[W^{1,2}]{b},   \\ \label{eq:Rgrowthbounds}
  2R(\eta,b) &\leq C \norm[W^{1,2}]{b}^2 \end{align}
 for all $b \in W^{1,2}(Q;\RR^n)$ and all $\eta \in W^{2,p}(Q;\RR^n)$ with $E(\eta) \leq E_0$ and $C = C(E_0)$.
 
% 
% \begin{rmk}
% \label{partial-Dir}
% The difference between \eqref{R3} and \eqref{R3q} is subtle but central to the difference between quasistatic and inertial evolutions. The reasons for this do not only become evident in the proof, but also have a physical explanation. Indeed, in contrast to the full inertial problem, in the quasistatic regime there is no automatic conservation of linear or rotational momentum. As a result, when considering physical dissipations such as $R(\eta,b) = \int_Q | \nabla \eta^T \nabla b + \nabla b^T \nabla \eta|^2\, dx$, which are invariant under Galilean transformations, we need to include additional restrictions to the admissible deformations in $\mathcal{E}$, such as (partial) Dirichlet boundary data or a fixed center of mass and rotation around it.
% \end{rmk}

\subsection{Statement of the main result} \label{main-sec}
The precise definition of (weak) solution to the initial value problem considered in this paper can be formulated as follows. 
\begin{definition}
\label{var-in-def}
Let $T>0$, $\eta_0 \in \E$, $\eta^* \in L^2(Q; \RR^n)$, and $f \in L^2((0, T); L^2(Q; \RR^n))$ be given. We say that 
\[
\eta \in L^{\infty}((0,T); \E) \cap W^{1,2}((0, T); W^{1, 2}(Q; \RR^n))
\]
with $E(\eta)\in L^\infty((0,T))$ is a \emph{weak solution} to \eqref{eq:strongSolution} in $(0, T)$ if $\eta(0) = \eta_0$ and the variational inequality
\begin{multline}
\label{var-ineq-def}
\int_0^T DE(\eta(t))\langle  \varphi(t) \rangle + D_2R(\eta(t), \partial_t \eta(t))\langle  \varphi(t)\rangle\,dt \\ - \rho \langle \eta^*, \varphi(0)\rangle_{L^2} - \int_0^T \rho \langle \partial_t \eta(t), \partial_t\varphi(t) \rangle_{L^2} \,dt \ge \int_0^T \langle f(t), \varphi(t) \rangle_{L^2}\,dt
\end{multline}
holds for all $\varphi \in C([0, T]; \Admr{\eta}{}) \cap C^1_c([0, T); L^2(Q; \RR^n))$. Here the set $\Admr{\eta}{}$ denotes the class of reduced admissible perturbations for the deformation $\eta$; its precise definition is given below in \Cref{lem:strictly-int-admissible}.

Furthermore, we say that such a function $\eta$ is a \emph{weak solution with a contact force $\sigma$} if additionally it satisfies
\begin{multline*}
% \label{weakSol-def}
\int_0^T DE(\eta(t))\langle  \varphi(t) \rangle + D_2R(\eta(t), \partial_t \eta(t))\langle  \varphi(t)\rangle\,dt \\ - \rho \langle \eta^*, \varphi(0)\rangle_{L^2} - \int_0^T \rho \langle \partial_t \eta(t), \partial_t\varphi(t) \rangle_{L^2} \,dt =  \int_{[0,T]\times \partial Q} \varphi(t,x)\cdot d \sigma(t,x) + \int_0^T   \langle f(t), \varphi(t) \rangle_{L^2}\,dt
\end{multline*}
 for all $\varphi \in C([0, T]; W^{2,p}_{\Gamma}(Q;\RR^n)) \cap C^1_c([0, T); L^2(Q; \RR^n))$, where $\sigma \in M([0,T] \times\partial Q; \RR^n)$ is a contact force satisfying the action-reaction principle in the sense of \Cref{cf-def}.
\end{definition} 

Observe that in view of its regularity, $\eta$ belongs to the space $C_w([0,T];W^{2,p}(Q;\RR^n))$. Therefore, we have that $\eta(t) \in W^{2,p}(Q;\RR^n)$ for all $t \in [0,T]$, and in particular the initial condition $\eta(0) = \eta_0$ holds in the classical sense.

With this in hand, we can state the main result of this paper.

\begin{thm}
\label{main-thm-VI}
Let $E$ and $R$ be as in \eqref{E1}--\eqref{E6} and \eqref{R1}--\eqref{R4}, respectively, and let $T> 0$, $\eta_0 \in \E$, $\eta^* \in L^2(Q; \RR^n)$, and $f \in L^2((0,T); L^2(Q; \RR^n))$ be given. Then \eqref{eq:strongSolution} admits a weak solution with a contact force in $(0,T)$ in the sense of \Cref{var-in-def}, where the resulting contact force $\sigma$ has no concentrations in time. Additionally, this solution satisfies the energy inequality
\begin{align*}
 E(\eta(t)) + \frac{\rho}{2} \norm[L^2]{\partial_t \eta(t)}^2 + \int_0^t 2R(\eta(s),\partial_t\eta(s))\, ds \leq E(\eta_0) + \frac{\rho}{2} \norm[L^2]{\eta^*}^2 + \int_0^t \langle f(s),\partial_t \eta(s)\rangle_{L^2}\, ds
\end{align*}
for $\mathcal{L}^1$-a.e.\@ $t \in [0,T]$.
\end{thm}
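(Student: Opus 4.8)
The plan is to prove Theorem~\ref{main-thm-VI} by the time-delayed approximation scheme of \cite{Benesova2020}, in two nested layers: first construct solutions to a sequence of quasistatic-type problems with a time-delay parameter $\tau>0$ (this is the content of Section~\ref{sec:qs}), and then send $\tau\to 0$ to recover inertia. At the delayed level one discretizes time into intervals of length $\tau$ and, on each interval, solves a minimization problem whose Euler--Lagrange inequality mimics \eqref{var-ineq-def} but with $\partial_{tt}\eta$ replaced by a finite-difference quotient of the previous two time slices. Existence of minimizers on each step follows from the direct method, using \eqref{E1}, \eqref{E3}, \eqref{E4} for coercivity and weak lower semicontinuity, \eqref{R1} for the dissipation term, and the crucial point that the constraint set $\mathcal{E}$ (incorporating the Ciarlet--Ne\v{c}as condition and $\eta(Q)\subset\Omega$) is weakly closed in $W^{2,p}$ — here $p>n$ and the compact embedding into $C^{1,\alpha}$ make both $\det\nabla\eta$ and the injectivity/containment conditions stable under weak convergence. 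The contact force $\sigma$ arises as a Lagrange multiplier for the non-interpenetration constraint; one extracts it via the variational inequality against admissible perturbations (cone $\Admr{\eta}{}$, resp.\ $T_\eta(\mathcal{E})$), and the description of normal cones from Section~\ref{sec:tangent-normal} together with the contact-force formalism of Section~\ref{sec:cf} shows it lives in $M([0,T]\times\partial Q;\RR^n)$ and obeys the action-reaction principle of \Cref{cf-def}.

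The heart of the argument, and the step I expect to be the main obstacle, is the energy inequality — both at the delayed level and in the limit. In \cite{Cesik2022} the time-delayed energy estimate was obtained by testing the equation with $\partial_t\eta$ itself; as the authors flag in Remark~\ref{rmk:energyEst}, with only Lipschitz boundaries the time-derivative is no longer an admissible test function (it need not lie in the relevant tangent cone, since the cone can be genuinely non-convex and non-smooth at corners of $\partial Q$ or $\partial\Omega$). So one needs a substitute. The natural replacement is to exploit the minimizing property directly: since at each discrete step $\eta^k$ minimizes the incremental functional, comparing its value with that of the competitor $\eta^{k-1}$ (or a suitable interpolation/retraction of it into the admissible set) yields a discrete energy-dissipation inequality \emph{for free}, without differentiating. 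Summing these telescoping inequalities over the time steps, using \eqref{R2}--\eqref{R3} (homogeneity of degree two and the Korn inequality) to control the dissipation, and \eqref{eq:Rgrowthbounds} for the growth bound, produces a uniform a~priori estimate of the form appearing in the statement, with the kinetic term encoded through the finite differences. The delicate point is that the comparison competitor must itself be admissible (strict non-interpenetration), which may require a small retraction or mollification that one must show costs only a negligible amount of energy — this is where the Lipschitz geometry of $\Omega$ and the openness properties of $\mathcal{E}$ genuinely enter.

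The passage to the limit $\tau\to 0$ then proceeds along standard but technically involved lines. From the uniform energy bound one extracts: weak-$*$ convergence of $\eta^\tau$ in $L^\infty((0,T);W^{2,p})$, weak convergence of $\partial_t\eta^\tau$ in $L^2((0,T);W^{1,2})$ (via \eqref{R3}), and — by an Aubin--Lions-type compactness argument — strong convergence of $\eta^\tau$ in, say, $C([0,T];C^1(\overline Q))$; the uniform bound on $\sigma^\tau$ in $M([0,T]\times\partial Q;\RR^n)$ gives weak-$*$ convergence of the contact forces. One must check that the limit $\eta$ still lies in $\mathcal{E}$ at every time (using the strong $C^1$ convergence), that $\eta(0)=\eta_0$, and that the limiting variational inequality \eqref{var-ineq-def} holds: the elastic term passes by \eqref{E5}--\eqref{E6} (the Minty-type argument is needed precisely to upgrade weak to strong convergence of second gradients where required, to pass to the limit in $DE(\eta^\tau)\langle\varphi\rangle$), the dissipation term by \eqref{R4} (weak continuity of $D_2R$), the inertial term by the compactness of $\partial_t\eta^\tau$ against $\partial_t\varphi$, and the initial-velocity term $\rho\langle\eta^*,\varphi(0)\rangle$ emerges from the discrete initialization of the finite differences. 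For the contact force identity and the action-reaction principle one passes to the limit in the test-function formulation against $\varphi\in C([0,T];W^{2,p}_\Gamma)$; the ``no concentrations in time'' property of $\sigma$ should follow from the fact that the delayed forces are, at each level, equi-integrable in time thanks to the energy bound. Finally, the limiting energy inequality is obtained from the discrete one by lower semicontinuity (using \eqref{E4}, \eqref{R1}, and weak lower semicontinuity of the $L^2$-norm for the kinetic term), holding for a.e.\ $t$ after passing through the piecewise-constant interpolants. The genuinely new difficulty throughout — compared to the $C^{1,\alpha}$ case — is that every place where one previously used a pointwise normal vector and its continuity/stability must now be recast in terms of the generalized normal cones and their semicontinuity properties established in Section~\ref{sec:tangent-normal}, and one must ensure the substitute energy estimate is robust enough to survive both limits.
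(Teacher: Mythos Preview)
Your overall strategy---time-delayed approximation \`a la \cite{Benesova2020}, a quasistatic solver via minimizing movements on each delayed interval, then sending the delay to zero---is the paper's. There is, however, a concrete gap in the energy inequality. Comparing $\eta^k$ with the competitor $\eta^{k-1}$ (which is always admissible, since $\eta^{k-1}\in\mathcal{E}$ by construction; no retraction or mollification is needed and your ``delicate point'' is a non-issue) gives only
\[
E(\eta_m) + \sum_{k=1}^m \tau\, R\Bigl(\eta_{k-1},\tfrac{\eta_k-\eta_{k-1}}{\tau}\Bigr) \le E(\eta_0) + \sum_{k=1}^m \tau\Bigl\langle f_k,\tfrac{\eta_k-\eta_{k-1}}{\tau}\Bigr\rangle_{L^2},
\]
i.e.\ a single copy of the dissipation. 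This suffices for compactness but not for the factor $2R$ stated in the theorem. The paper obtains the sharp constant via the Moreau--Yosida / De Giorgi variational interpolation (\Cref{thm:td-energy-ineq}): one minimizes $E_\theta(\,\cdot\,;\eta_{k-1})$ over $\mathcal{E}$ for each $\theta\in(0,\tau]$, shows that $\theta\mapsto\min E_\theta$ is locally Lipschitz, computes its a.e.\ derivative, and integrates from $0$ to $\tau$. That derivative produces a \emph{second} dissipation term carried by the De Giorgi interpolant $\beta_\tau$; one then identifies the weak limit $\beta=\partial_t\eta$ by subtracting the two limiting Euler--Lagrange inequalities and invoking monotonicity of $D_2R(\eta,\cdot)$. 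The same device handles the kinetic energy: the added $\tfrac{\rho}{2h}\|b\|^2_{L^2}$ in $\tilde R^{(h)}$ yields, after the De Giorgi step and telescoping over the intervals $[lh,(l+1)h]$, the running average $\fint_{s-h}^{s}\tfrac{\rho}{2}\|\partial_t\eta^{(h)}\|^2_{L^2}\,dt$, which becomes $\tfrac{\rho}{2}\|\partial_t\eta(s)\|^2_{L^2}$ in the limit. Your sketch does not indicate how this term would emerge.

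A smaller point: ``no concentrations in time'' for $\sigma$ does not follow from an equi-integrability statement driven by the energy bound alone. The paper instead tests the equation on $[t_0,t_0+\delta]$ with a uniformly interior vector field $\xi_\Gamma\tilde t_{\eta^{(h)}}$ chosen piecewise constant in time, performs a discrete summation by parts on the inertial difference quotient, and observes that the resulting boundary terms at $t_0$ and $t_0+\delta$ are controlled uniformly by the energy inequality (and can be made small by shrinking the spatial support of $\tilde t_{\eta^{(h)}}$ near $\partial Q$), while all remaining terms are $O(\sqrt{\delta})$.
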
 

As in our previous paper, a similar quasistatic result is also available in the form of \Cref{AuxExistence}.

It is worth noting that, in contrast to \cite{Cesik2022}, we do not provide a characterization of weak solutions in terms of an inequality for test functions in the whole abstract tangent space to the set of configurations. Instead we only consider a somewhat restricted subspace. This is not an oversight, but a crucial feature of the problem. Indeed, in order to work with a set of solutions that is closed under convergence, we have to contend with situations where forces may appear in such a way that prevents the solids from moving or deforming in a direction that otherwise seems admissible. We defer a more detailed discussion of this issue to \Cref{rmk:testSpaces}, as it requires several definitions that will be given in the intervening sections.

\section{Tangent and normal cones}\label{sec:tangent-normal}

We begin by recalling basic definitions and some properties for cones of generalized tangents and normals. For a more detailed treatment we refer to the monograph \cite{Rockafellar1998}, whose notation we also try to follow. However as notation tends to vary between authors, we aim for this section to be mostly self-contained.

Recall that $Q \subset \RR^n$ is open, bounded, and (strongly) Lipschitz, that is, $\partial Q$ is given locally by the graph of a Lipschitz function. 

\begin{definition}[Cone]
A set $C \subset \RR^n$ is called a \emph{cone} if $w \in C$ implies $\lambda w \in C$ for all $\lambda \geq 0$. For any cone $C$, the \emph{polar cone} of $C$ is defined as 
\[
C^* \coloneqq \{w \in \RR^n: w \cdot v \leq 0 \text{ for all } v \in C\}.
\]
\end{definition}
Note that a cone needs to be neither closed nor convex, whereas the polar cone is by definition both closed and convex. In particular $C^{**}$ will be the closed convex hull of $C$.

\begin{definition}[Tangent and normal cones] \label{T&N}
For $Q$ as above and $x \in \overline Q$, we define the following cones:
\begin{itemize} 
\item[$(i)$] \emph{Tangent cone}\footnote{Here $\limsup$ and $\liminf$ denote the set-theoretic versions in a metric space (cf.\@ \cite[Chapter 4]{Rockafellar1998}).}
\[
\tang Q x \coloneqq \left \{ \lim_{i\to\infty} \frac{x_i-x}{\tau_i} : x_i\to x, x_i\in \overline Q, \tau_i\to 0^+ \right\}= \limsup_{\tau\to 0^+} \frac{Q-x}{\tau};
\]
\item[$(ii)$] \emph{Regular tangent cone} 
\[
\regtang Q x \coloneqq \left \{v \in \RR^n : \forall x_i \to x, x_i\in \overline Q , \exists v_i \in \tang{Q}{x_i}, v_i \to v \right \} = \liminf_{y \to x, y\in \overline Q} \tang{Q}{y}; 
\]
\item[$(iii)$] \emph{Convexified normal cone} 
\[
 \connormal Q x \coloneqq \regtang{Q}{x}^*.
\]
\end{itemize}

We also define the respective cones in the deformed configuration via
\[
\tang \eta x \coloneqq \tang {\eta(Q \cap B_{r}(x))}{\eta(x)} 
\]
for any $\eta \in \mathcal{E}$ with $E(\eta) < \infty$ and $r > 0$ such that $\eta$ is injective on $\overline{Q} \cap B_r(x)$.\footnote{Notice that we cannot simply use $\tang{\eta(Q)}{\eta(x)}$. While this would work in the absence of contact, when collisions do occur, we need to distinguish the different, physically unconnected parts of the solid, which this definition fails to do. Also note that this is well defined, as any $\eta$ of finite energy is locally injective by \Cref{contact-set-properties} $(iv)$.} Similar notations are used for the other cones introduced above. We also use the shorthand $\tang{\eta}{t,x} \coloneqq \tang{\eta(t)}{x}$ when dealing with time-dependent deformations. 
\end{definition}

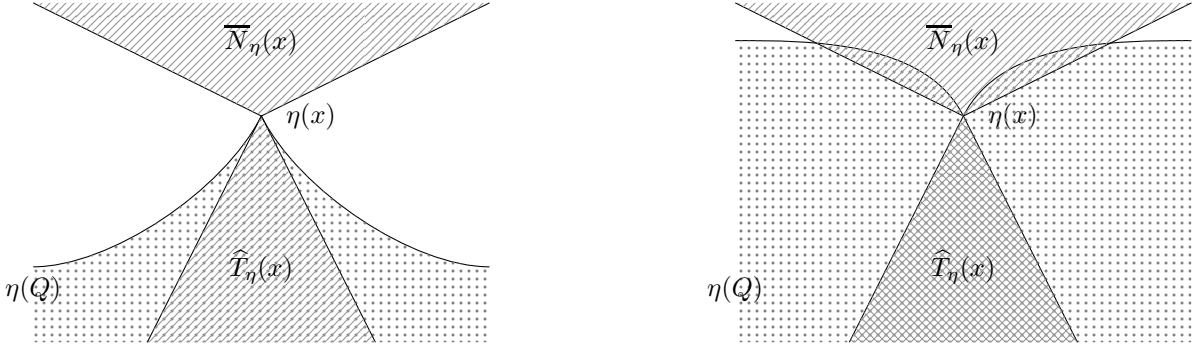
\begin{figure}[ht] 
\centering
\begin{subfigure}{.42\textwidth}
\begin{tikzpicture}
 \node[right] at (0.2,0) {$\eta(x)$};
 \draw (-3,-2) .. controls +(1,0) and +(-.5,-1) .. (0,0) .. controls +(.5,-1) and +(-1,0) .. (3,-2);
 \fill[pattern=dots,pattern color=black!50] (-3,-2) .. controls +(1,0) and +(-.5,-1) .. (0,0) .. controls +(.5,-1) and +(-1,0) .. (3,-2) -- (3,-3) -- (-3,-3);
 \node[below] at (-3,-2) {$\eta(Q)$};
 \draw (-1.5,-3) -- (0,0) -- (1.5,-3);
 \fill[pattern= north east lines,pattern color=black!40]  (-1.5,-3) -- (0,0) -- (1.5,-3) -- (-1.5,-3);
 \node at (0,-2) {$\regtang \eta x$};
 \draw (-3,1.5) -- (0,0) -- (3,1.5);
 \fill[pattern=north east lines,pattern color=black!40] (-3,1.5) -- (0,0) -- (3,1.5) -- cycle;
 \node at (0,1) {$\connormal{\eta}{x}$};
\end{tikzpicture}
\end{subfigure}
\hfill 
\begin{subfigure}{.42\textwidth}
\begin{tikzpicture}
 \node[right] at (0.2,0) {$\eta(x)$};
 \draw (-3,1) .. controls +(1,0) and +(-.5,1) .. (0,0) .. controls +(.5,1) and +(-1,0) .. (3,1);
 \fill[pattern=dots,pattern color=black!50] (-3,1) .. controls +(1,0) and +(-.5,1) .. (0,0) .. controls +(.5,1) and +(-1,0) .. (3,1) -- (3,-3) -- (-3,-3)-- (-3,1);
 \node[below] at (-3,-2) {$\eta(Q)$};
 \draw (-1.5,-3) -- (0,0) -- (1.5,-3);
 \fill[pattern= crosshatch,pattern color=black!40]  (-1.5,-3) -- (0,0) -- (1.5,-3) -- (-1.5,-3);
 \node at (0,-2) {$\regtang \eta x$};
 \draw (-3,1.5) -- (0,0) -- (3,1.5);
 \fill[pattern=north east lines,pattern color=black!40] (-3,1.5) -- (0,0) -- (3,1.5) -- cycle;
 \node at (0,1) {$\connormal{\eta}{x}$};
\end{tikzpicture}
\end{subfigure} 

 \caption{\label{fig:tanCone}Interior regular tangent and exterior convexified normal cone for corners with acute and obtuse angle respectively.}
\end{figure}
% 
% \begin{figure}[ht]
% \begin{subfigure}{.3\textwidth}
% \begin{tikzpicture}
%  \draw (-1,-2) -- (0,0) -- (0,-2);
%  \fill[pattern=dots,pattern color=black!50] (-1,-2) -- (0,0) -- (0,-2) --cycle;
%  \draw (-2,1) -- (0,0) -- (2,0);
%  \fill[pattern=north east lines,pattern color=black!40] (-2,1) -- (0,0) -- (2,0) -- (2,1.5) -- (-2,1.5) -- cycle;
%  \node at (0,1) {$\connormal{\eta}{x}$};
% \end{tikzpicture}
% \end{subfigure}
% \hfill
% \begin{subfigure}{.3\textwidth}
% \begin{tikzpicture}
%  
%  \draw (2,1) -- (0,0) -- (2,2);
%  \fill[pattern=dots,pattern color=black!50] (2,1) -- (0,0) -- (2,2)-- cycle;
%  \draw (1,-2) -- (0,0) -- (-2,2);
%  \fill[pattern=north west lines,pattern color=black!40] (1,-2) -- (0,0) -- (-2,2) -- (-2,0) -- (0,-2) -- cycle;
%  \node at (-1,-1) {$\connormal{\eta}{y}$};
% \end{tikzpicture}
% \end{subfigure}
% \hfill
% \begin{subfigure}{.3\textwidth}
% \begin{tikzpicture}
%  \draw (-2,1) -- (0,0) -- (2,0);
%  \fill[pattern=north east lines,pattern color=black!40] (-2,1) -- (0,0) -- (2,0) -- (2,1.5) -- (-2,1.5) -- cycle;
%  
%  \draw (-1,2) -- (0,0) -- (2,-2);
%  \fill[pattern=north west lines,pattern color=black!40] (-1,2) -- (0,0) -- (2,-2) -- (2,0) -- (0,2) -- cycle;
%  \node at (1,1) {$\connormal{\eta}{x} \cap -\connormal{\eta}{y}$};
% \end{tikzpicture}
% \end{subfigure}
%  \caption{Normal cones for two corners and shared normal directions.}
% \end{figure}

Here we recall that while $\tang Q x$ describes the usual set of tangent vectors at $x$, the regular tangent cone $\regtang Q x$ describes the smaller set of vectors which are (up to a vanishing error) also tangent vectors to any point in a neighborhood of $x$. See Figure \ref{fig:tanCone} for an illustration.

\begin{rmk} 
\label{TN-antipol}
The definition of $\connormal{Q}{x}$ we use is not the classic definition. Instead, one usually proceeds via some intermediate steps. First, one constructs the regular normal cone as the polar of $\tang{Q}{x}$. Then, similarly to $\regtang{Q}{x}$, one takes the normal cone to be the corresponding $\limsup$. Finally, since the resulting set can be nonconvex, one takes the convex hull. However, it follows from \cite[Theorem 6.28]{Rockafellar1998} that these two approaches are equivalent. Thus, since we will only ever use $\connormal{Q}{x}$ as a polar cone, we have opted to take this directly as a definition.
\end{rmk}

Notice that the definition of $\regtang Q x$ readily implies that if for $w\in\RR^n$ there are $\epsilon_0>0$ and $r>0$ such that $y + \epsilon w \in \overline{Q}$ for all $\epsilon \in(0,\epsilon_0]$ and all $y\in B_r(x)$, then $w \in \regtang Qx$. More interestingly, in the interior of $\regtang Qx$ the converse holds as well, provided that the condition above holds for a neighborhood of $w$. To be more precise, one can show the following:

\begin{prop}[{\cite[Theorem 6.36]{Rockafellar1998}}] \label{regtang-char}
For $x\in\partial Q$, the following are equivalent:
\begin{itemize}
\item[$(i)$] $w\in \interior \regtang Q x$;
\item[$(ii)$] There exist $\epsilon_0>0$, $r>0$, $\delta>0$ such that $y + \epsilon v \in \overline{Q}$ for all $\epsilon \in [0,\epsilon_0]$, all $y \in \overline{Q} \cap B_r(x)$, and all $v \in B_\delta(w)$.
\end{itemize}
\end{prop}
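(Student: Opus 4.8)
The plan is to recognize condition $(ii)$ as the statement that $w$ is a \emph{hypertangent vector} to $\overline Q$ at $x$ (in the terminology of \cite{Rockafellar1998}) and to prove the equivalence with $(i)$ in two halves. The implication $(ii)\Rightarrow(i)$ is the easy one: if $(ii)$ holds with parameters $\epsilon_0,r,\delta$, then for each fixed $v\in B_\delta(w)$ one has $y+\epsilon v\in\overline Q$ for all $y\in\overline Q\cap B_r(x)$ and $\epsilon\in(0,\epsilon_0]$, so the observation preceding the proposition gives $v\in\regtang Q x$; hence $B_\delta(w)\subseteq\regtang Q x$, i.e.\ $w\in\interior\regtang Q x$.

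For $(i)\Rightarrow(ii)$ I would isolate two facts. First, the set $H$ of vectors satisfying $(ii)$ is nonempty, and this is the only place the Lipschitz assumption enters: in coordinates where $\partial Q$ is locally the graph of an $L$-Lipschitz function $\gamma$ with $\overline Q$ locally equal to $\{y_n\le\gamma(y')\}$, any $v=(v',v_n)$ with $v_n<-L|v'|$ lies in $H$, since $y_n\le\gamma(y')$ implies $y_n+\epsilon v_n\le\gamma(y')-L\epsilon|v'|\le\gamma(y'+\epsilon v')$ for $\epsilon$ small enough to keep $y+\epsilon v$ in the coordinate patch, and this persists under small perturbations of $v$. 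Second, the key convexity property: if $w_0\in H$ and $w_1\in\regtang Q x$, then $(1-\lambda)w_0+\lambda w_1\in H$ for every $\lambda\in[0,1)$. Granting these, $(i)\Rightarrow(ii)$ is immediate: given $w\in\interior\regtang Q x$, fix $w_0\in H$; since $w$ is an interior point and $w_1:=(w-\lambda w_0)/(1-\lambda)\to w$ as $\lambda\to 0^+$, we have $w_1\in\regtang Q x$ for $\lambda>0$ small, whence $w=(1-\lambda)w_1+\lambda w_0\in H$, which is exactly $(ii)$.

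The step I expect to be the main obstacle is the convexity property. To prove it I would use the characterization (standard for the regular/Clarke tangent cone; cf.\ \cite{Rockafellar1998}) that $w_1\in\regtang Q x$ is equivalent to: for every $\mu>0$ there is $\beta>0$ with $\dist(y+\tau w_1,\overline Q)\le\mu\tau$ whenever $y\in\overline Q\cap B_\beta(x)$ and $\tau\in(0,\beta)$. Fix $\lambda\in(0,1)$, write $w_\lambda:=(1-\lambda)w_0+\lambda w_1$, and let $\delta_0>0$ be such that $\zeta+su\in\overline Q$ whenever $\zeta\in\overline Q$ is near $x$, $u\in B_{\delta_0}(w_0)$, and $s>0$ is small (possible since $w_0\in H$). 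Given $x'\in\overline Q$ near $x$, a small $t>0$, and $v\in B_{\delta'}(w_\lambda)$ with $\delta'$ to be fixed, one uses the characterization of $w_1$ to pick $z\in\overline Q$ with $|z-(x'+t\lambda w_1)|\le\mu t\lambda$ for a prescribed small $\mu>0$, and then observes
\[
x'+tv=z+t(1-\lambda)\Big(w_0+\tfrac{x'+t\lambda w_1-z}{t(1-\lambda)}+\tfrac{v-w_\lambda}{1-\lambda}\Big).
\]
The two correction terms in the parentheses have norm at most $\mu\lambda/(1-\lambda)$ and $\delta'/(1-\lambda)$ respectively, so choosing $\mu$ and $\delta'$ small forces the parenthesis into $B_{\delta_0}(w_0)$; since $z$ lies close to $x$ and $t(1-\lambda)$ is small, the defining property of $\delta_0$ yields $x'+tv\in\overline Q$, i.e.\ $(ii)$ holds for $w_\lambda$. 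The only real delicacy is the bookkeeping: arranging the interdependent thresholds $r$, $\epsilon_0$, $\delta'$, $\mu$ (and the smallness of $t$) so that all the constraints used above hold at once.
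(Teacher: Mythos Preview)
The paper does not supply its own proof of this proposition; it simply quotes the result from \cite[Theorem 6.36]{Rockafellar1998}. Your argument is correct and is essentially the proof one finds there: you identify condition $(ii)$ as the hypertangent property, obtain $(ii)\Rightarrow(i)$ directly, and for $(i)\Rightarrow(ii)$ you use that the Lipschitz boundary guarantees at least one hypertangent, together with the key fact that convex combinations of a hypertangent with a regular tangent (weight strictly positive on the hypertangent) remain hypertangents. Your verification of this last fact via the $\dist$-characterization of $\regtang Q x$ and the two-step decomposition $x'+tv=z+t(1-\lambda)u$ is the standard one, and the bookkeeping you flag as delicate is routine once the dependencies are written down. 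One cosmetic point: the paper's convention in \Cref{lem:lipschitz-unif-tangent} places $Q$ \emph{above} the Lipschitz graph, so in your local computation the sign on $v_n$ would flip; this of course changes nothing.
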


As we deal with a Lagrangian representation of the solid, it is useful to remember how tangential and normal cones in the image relate to their respective counterparts in the reference configuration. For this we note their transformation behavior.

\begin{lem}[Transformation behavior of cones] For any $\eta \in \E$ and any $x \in \partial Q$ we have
\label{tangent-transform}
\begin{equation*}
\regtang{\eta}{x}= [\nabla\eta(x)]\regtang{Q}{x} \qquad \text{ and } \qquad
\connormal \eta x=[\cof \nabla \eta (x)] \connormal Q x,
\end{equation*}
where $\cof$ denotes the cofactor matrix.
\end{lem}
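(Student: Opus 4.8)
The plan is to prove the two identities separately, though both follow the same template: reduce the claim about cones in the image to the corresponding claim in the reference configuration using that $\eta$ is a $C^1$-diffeomorphism near $x$ (recall $\eta \in C^{1,1-n/p}(\overline Q)$, $\det\nabla\eta(x) > 0$, and $\eta$ is injective on $\overline Q \cap B_r(x)$ for small $r$ by \Cref{contact-set-properties} $(iv)$). First I would establish the tangent cone identity $\tang{\eta}{x} = [\nabla\eta(x)]\tang{Q}{x}$. For ``$\subseteq$'', take $v = \lim_i (\eta(x_i) - \eta(x))/\tau_i$ with $x_i \to x$ in $\overline Q$ and $\tau_i \to 0^+$; writing $\eta(x_i) - \eta(x) = \nabla\eta(x)(x_i - x) + o(|x_i - x|)$ and passing to a subsequence along which $(x_i - x)/\tau_i$ either converges (to some $w \in \tang{Q}{x}$, giving $v = \nabla\eta(x)w$) or has $|x_i - x|/\tau_i \to \infty$ (which one rules out, since then $|\eta(x_i)-\eta(x)|/\tau_i \to \infty$ as $\nabla\eta(x)$ is invertible, contradicting convergence to $v$) — one may need to first normalize to extract a direction and then control the ratio. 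For ``$\supseteq$'' one runs the same expansion in reverse. The regular tangent cone identity then follows because $\regtang{\cdot}{\cdot}$ is the $\liminf$ of the tangent cones over nearby points: using the identity $\tang{\eta}{y} = [\nabla\eta(y)]\tang{Q}{y}$ at all $y$ near $x$, together with continuity of $y \mapsto \nabla\eta(y)$ and invertibility of $\nabla\eta(x)$, one checks that $\liminf_{y\to x}[\nabla\eta(y)]\tang{Q}{y} = [\nabla\eta(x)]\liminf_{y\to x}\tang{Q}{y} = [\nabla\eta(x)]\regtang{Q}{x}$; concretely, if $v_i \in \tang{\eta}{y_i}$ with $v_i \to v$, write $v_i = \nabla\eta(y_i)w_i$ so $w_i = [\nabla\eta(y_i)]^{-1}v_i \to [\nabla\eta(x)]^{-1}v$, and $w_i \in \tang{Q}{y_i}$ forces $[\nabla\eta(x)]^{-1}v \in \regtang{Q}{x}$, and conversely.

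For the normal cone identity, I would compute directly from the definition $\connormal{\eta}{x} = \regtang{\eta}{x}^*$ and the just-established $\regtang{\eta}{x} = [\nabla\eta(x)]\regtang{Q}{x}$. Abbreviating $F := \nabla\eta(x)$, the claim is $(F\, \regtang{Q}{x})^* = (\cof F)\,\connormal{Q}{x} = (\cof F)(\regtang{Q}{x})^*$. This is the standard linear-algebra fact that for an invertible matrix $F$ and any cone $C$, one has $(FC)^* = F^{-T}C^*$, combined with $\cof F = (\det F)\,F^{-T}$ and the fact that $\det F > 0$ together with $C^*$ being a cone makes scaling by the positive constant $\det F$ irrelevant. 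The verification of $(FC)^* = F^{-T}C^*$ is immediate: $w \in (FC)^*$ iff $w \cdot Fv \le 0$ for all $v \in C$ iff $(F^T w)\cdot v \le 0$ for all $v \in C$ iff $F^T w \in C^*$ iff $w \in F^{-T}C^*$.

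The main obstacle is the ``$\subseteq$'' direction of the tangent cone identity, specifically ruling out the degenerate scaling $|x_i - x|/\tau_i \to \infty$ and handling the $o(|x_i-x|)$ error term uniformly — this is where the $C^1$ (indeed Hölder) regularity of $\eta$ and the strict positivity of $\det\nabla\eta$ near $x$ (ensured by \eqref{E2} since $E(\eta) < \infty$) genuinely enter. Everything else is either a direct unwinding of the set-theoretic $\liminf$/$\limsup$ definitions or elementary linear algebra. I would present the tangent-cone step as the substantive lemma, then dispatch the normal-cone step in a few lines.
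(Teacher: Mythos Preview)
Your proposal is correct and follows essentially the same template as the paper: establish the tangent-cone identity first, then read off the normal-cone identity from polarity plus the cofactor formula $\nabla\eta(x)\,\cof\nabla\eta(x)^T = (\det\nabla\eta)\,\mathrm{I}$. The one notable difference is in how you handle the ``$\subseteq$'' direction of $\tang{\eta}{x} = [\nabla\eta(x)]\tang{Q}{x}$. You work directly with the sequence $(\eta(x_i)-\eta(x))/\tau_i$ and have to rule out the degenerate case $|x_i-x|/\tau_i \to \infty$ by hand, which you correctly flag as the main obstacle. The paper sidesteps this entirely: it only proves the forward implication (if $(x_i-x)/\tau_i \to v$ then $(\eta(x_i)-\eta(x))/\tau_i \to \nabla\eta(x)v$), and then observes that since $\eta$ is a local $C^1$-diffeomorphism, the \emph{same} forward implication applied to $\eta^{-1}$ yields the reverse inclusion. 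This symmetry argument is shorter and avoids the subsequence-and-scaling analysis altogether, though your direct route is perfectly valid. For the passage from $\tang{}{}$ to $\regtang{}{}$ the paper is terse (``This implies the first statement''), whereas you spell out the $\liminf$ argument using continuity of $\nabla\eta$; your version is more explicit but not materially different.
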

\begin{proof} For any sequence $\{x_i\}_i \subset \overline Q$ with $(x_i-x)/\tau_i \to v$ where $\tau_i \to 0^+$, we have that 
\[
\frac{\eta(x_i)-\eta(x)}{\tau_i} \to [\nabla \eta(x)] v.
\]
As $\eta$ is locally invertible, the analogous formula holds for $\eta^{-1}$. This implies the first statement. The second statement follows from the definition of $\connormal{Q}{x}$ and the well-known formula $\nabla \eta(x) \cof \nabla \eta (x)^T  = (\det \nabla \eta) \mathrm{I}$.
\end{proof}

Finally, throughout the remaining sections we will frequently use the fact that a Lipschitz boundary implies that the tangent cone cannot be too degenerate. This is made precise in the following lemma.

\begin{lem}\label{lem:lipschitz-unif-tangent}
The following hold:
\begin{itemize}
\item[$(i)$] For every $x \in \partial Q$ we have that $\interior \regtang Q x \neq\emptyset$. In particular, $ \regtang Q x$ contains a cone of the form
\begin{equation}
\label{innercone}
L_{\theta,v} \coloneqq \{w \in \RR^n : w \cdot v \ge \cos \theta |w||v|\}
\end{equation}
(i.e., $L_{\theta, v}$ denotes the cone in the direction $v \in \RR^n \setminus \{0\}$, with opening angle $\theta$), where $\theta$ can be chosen independently of $x$, depending only on the Lipschitz continuity of $\partial Q$, and $v$ can be chosen to attain one of finitely many values $v_1,\dots, v_{k}$ with $|v_i| = 1$.
\item[$(ii)$] Given $E_0 \geq E_{\min}$, there exist $\vartheta > 0$, vectors $v_1, \dots, v_k \in \RR^n$, a covering $\{G_1, \dots, G_k\}$ of $\partial Q$, and points $x_i \in G_i$ such that for every $\eta \in \E$ with $E(\eta)\leq E_0$ we have that for every $x \in G_i$ it holds $L_{\vartheta,w_i} \subset \regtang{\eta}{x}$, where $w_i \coloneqq [\nabla \eta(x_i)]v_i$. Note that $\vartheta$, $k$, and the vectors $v_i$ depend only on $E_0$ and $Q$. In particular, these can be chosen in such a way that they do not depend on $\eta$.
\end{itemize}
\end{lem}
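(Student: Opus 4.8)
The plan is to prove the two items essentially independently, with $(i)$ being a purely geometric fact about Lipschitz domains and $(ii)$ being the transfer of $(i)$ through the deformation $\eta$ using the transformation behavior of cones together with the uniform bounds coming from the energy sublevel set.

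For $(i)$, I would use the very definition of a strongly Lipschitz boundary: for each $x \in \partial Q$ there is a radius $r_x > 0$, a rotation, and a Lipschitz function $g_x$ with Lipschitz constant $L_x$ such that, in suitable coordinates, $\overline{Q} \cap B_{r_x}(x)$ lies above the graph of $g_x$. A direct computation shows that in these coordinates the cone $L_{\theta_x, e_n}$ with $\tan\theta_x$ (or rather $\cot$, depending on the convention) controlled by $1/L_x$ has the property that $y + \epsilon v \in \overline{Q}$ for all small $\epsilon$, all $y$ near $x$ in $\overline Q$, and all $v$ near the axis; by \Cref{regtang-char} this puts (the interior of) that cone inside $\regtang{Q}{y}$ for all $y$ in a smaller ball around $x$, and in particular shows $\interior \regtang{Q}{x} \neq \emptyset$. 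Then I would invoke compactness of $\partial Q$: cover $\partial Q$ by finitely many such balls $B_{r_{x_1}/2}(x_1), \dots, B_{r_{x_k}/2}(x_k)$, set $\theta := \min_j \theta_{x_j}$ (which depends only on $\max_j L_{x_j}$, i.e., on the Lipschitz character of $\partial Q$), and let $v_1, \dots, v_k$ be the (rotated back) axis directions $e_n$ for each chart. Any $x \in \partial Q$ lies in some $B_{r_{x_j}/2}(x_j)$, and then $L_{\theta, v_j} \subset \regtang{Q}{x}$ by the above. This gives the finitely-many-directions statement.

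For $(ii)$, the idea is to push item $(i)$ forward by $\nabla\eta$. Fix $E_0$, take the covering $\{G_j\}$ to be (the intersections with $\partial Q$ of) the balls $B_{r_{x_j}/2}(x_j)$ from $(i)$, and fix the base points $x_j$. By \Cref{tangent-transform}, $\regtang{\eta}{x} = [\nabla\eta(x)]\regtang{Q}{x} \supseteq [\nabla\eta(x)] L_{\theta, v_j}$ for $x \in G_j$. Now I need two uniform facts, both consequences of $E(\eta) \le E_0$: first, by \eqref{E2} and \eqref{E3} together with the Sobolev embedding $W^{2,p} \hookrightarrow C^{1,1-n/p}$ (recall $p > n$), the maps $\nabla\eta$ are uniformly bounded in $C^{0,1-n/p}(\overline Q)$ and satisfy $\det\nabla\eta \ge \e_0 > 0$ uniformly, so there are constants $0 < m \le M < \infty$, depending only on $E_0$ and $Q$, with $m|\xi| \le |[\nabla\eta(x)]\xi| \le M|\xi|$ for all $x, \xi$ (the lower bound using that $|\nabla\eta(x)^{-1}| \le |\cof\nabla\eta(x)|/\det\nabla\eta(x)$ is bounded). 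Second, the same Hölder bound gives a uniform modulus of continuity: $|\nabla\eta(x) - \nabla\eta(x_j)| \le \omega(|x - x_j|)$ with $\omega$ independent of $\eta$; shrinking the radii $r_{x_j}$ at the outset (which only depends on $Q$ and $E_0$, not on $\eta$) we may assume $|\nabla\eta(x) - \nabla\eta(x_j)| \le m\sin\theta/4$, say, on $G_j$. Then for $w := [\nabla\eta(x_j)]v_j$ and any $v \in L_{\theta,v_j}$ with $|v| = 1$, the vector $[\nabla\eta(x)]v$ differs from $[\nabla\eta(x_j)]v$ by at most $m\sin\theta/4$ while $|[\nabla\eta(x_j)]v| \ge m$; a short estimate comparing $[\nabla\eta(x_j)]v$ and $[\nabla\eta(x_j)]v_j = w$ (using that the linear image of the cone $L_{\theta,v_j}$ under the fixed matrix $\nabla\eta(x_j)$ contains a cone $L_{\theta', w}$ with $\theta'$ bounded below in terms of $\theta$, $m$, $M$) then shows $[\nabla\eta(x)]v \in L_{\vartheta, w_j}$ for a suitable $\vartheta = \vartheta(E_0, Q) > 0$. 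Hence $L_{\vartheta, w_j} \subset [\nabla\eta(x)] L_{\theta,v_j} \subset \regtang{\eta}{x}$ for all $x \in G_j$, as claimed.

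The main obstacle I anticipate is the last estimate in $(ii)$: controlling how much the image of a solid cone $L_{\theta,v_j}$ under the linear map $\nabla\eta(x)$ is distorted, uniformly over the sublevel set. One must be careful that a matrix with bounded norm and bounded-below determinant can still have a large condition number, so the opening angle $\vartheta$ of the transported cone genuinely degrades — but it degrades by an amount controlled purely by $m$, $M$ (equivalently by $E_0$ and $Q$) and not by $\eta$ itself, which is exactly what the statement asks for. Everything else is either the definition of Lipschitz boundary, compactness of $\partial Q$, \Cref{regtang-char}, \Cref{tangent-transform}, or the standard a priori bounds \eqref{E2}--\eqref{E3} combined with the Morrey embedding.
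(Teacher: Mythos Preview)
Your overall strategy matches the paper's proof closely: part $(i)$ via the local Lipschitz-graph description plus a finite cover of $\partial Q$, and part $(ii)$ by pushing forward through $\nabla\eta$ using \Cref{tangent-transform} together with the uniform $C^{1,\alpha}$ bounds and the lower bound on $\det\nabla\eta$ coming from \eqref{E2}--\eqref{E3}. The paper's write-up of Step~2 is in fact terser than yours; it simply asserts that $[\nabla\eta(x)]L_{\theta,v}$ contains a cone $L_{\vartheta,[\nabla\eta(x)]v}$ with $\vartheta$ depending only on $\theta$ and $E_0$, and then replaces the $x$-dependent axis by $w_i=[\nabla\eta(x_i)]v_i$ via uniform continuity.

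There is, however, one genuine slip in your sketch of $(ii)$: the explicit computation you outline goes in the wrong direction. You take $v\in L_{\theta,v_j}$ and argue that $[\nabla\eta(x)]v\in L_{\vartheta,w_j}$; this would show $[\nabla\eta(x)]L_{\theta,v_j}\subset L_{\vartheta,w_j}$, which is the opposite of the inclusion $L_{\vartheta,w_j}\subset[\nabla\eta(x)]L_{\theta,v_j}$ you then claim. Your parenthetical remark (that the image of $L_{\theta,v_j}$ under the fixed matrix $\nabla\eta(x_j)$ \emph{contains} a cone $L_{\theta',w_j}$) is the correct starting point; from there you should argue that $[\nabla\eta(x)][\nabla\eta(x_j)]^{-1}$ is uniformly close to the identity on $G_j$, so that the image of $L_{\theta',w_j}$ under this near-identity map still contains a slightly smaller cone $L_{\vartheta,w_j}$. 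All the ingredients you list (the two-sided bound $m\le|\nabla\eta(x)\xi|/|\xi|\le M$ and the uniform modulus of continuity) are exactly what is needed for this corrected argument.
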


\begin{proof}
We divide the proof into two steps.
\newline
\emph{Step 1:} Recall that by our assumptions on $Q$ there exists a finite covering of $\partial Q$, namely $\{G_1,\dots, G_{k}\}$, such that for every $i = 1, \dots k$ we have that $\Gamma_i \coloneqq \partial Q \cap G_i$ is the graph of a Lipschitz function in the direction of $v_i \in \RR^n$, $|v_i|=1$, and furthermore that $Q \cap G_i$ is the region above the graph. Then for every $i$ we must have that 
\[
y + L_{\theta, v_i} \cap B_{r_y}(y) \subset \overline{Q} \cap G_i
\]
for some $r_y > 0$ which depends on the distance of $y$ to the relative boundary of $\Gamma_i$ in $\partial Q$. In particular, this implies that $L_{\theta,v_i} \subset \tang Q y$ for all $y\in \Gamma_i$, which in turn shows that $L_{\theta,v_i} \subset \regtang Q x$ for all $x \in \Gamma_i$. This concludes the proof of the first statement.
\newline
\emph{Step 2:} The second result follows from similar considerations. To see this, consider the cone $L_{\theta, v}$. Then, using the fact that $\nabla \eta$ is uniformly continuous (with modulus of continuity that depends only on $E_0$) and that $\det \nabla \eta > 0$ in $\overline{Q}$ (see \eqref{E2}), we must have that the transformed cone $[\nabla \eta(x)]L_{\theta,v}$ contains a cone of the form $L_{\vartheta, w}$, where $\vartheta$ depends only on $\theta$ and $E_0$, and $w \coloneqq [\nabla\eta(x)]v$. For $i = 1, \dots, k$, let $G_i$, $x_i$, and $v_i$ be as in the previous step and set $w_i \coloneqq [\nabla \eta(x)]v_i$. Then, for every $x \in G_i$ we have that
\[
L_{\vartheta,w_i} \subset [\nabla \eta(x)]L_{\theta, v_i} \subset [\nabla \eta(x)] \regtang Q x.
\]
The desired result follows from an application of \Cref{tangent-transform}.
\end{proof}

We conclude with a technical lemma.

\begin{lem}
\label{lem:larger-polar}
Let $C \subset \RR^n$ be a cone with nonempty interior and let $v_0 \in \interior C \setminus \{0\}$. Then there exists $\beta > 0$ such that the cone
\[
K_{\beta} \coloneqq \{w \in \RR^n: w \cdot v \le \sin \beta |w||v| \text{ for all } v \in C\}
\]
(i.e., $K_{\beta}$ is the polar cone of $C$ but with opening angle enlarged by $\beta$) has the property that $v_0 \cdot w < 0 $ for all $w \in K_\beta\setminus \{0\}$.
\end{lem}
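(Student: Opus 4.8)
The plan is to exploit the fact that the polar cone $C^*$ is contained in the open halfspace $\{w : v_0 \cdot w < 0\} \cup \{0\}$ and then argue that a sufficiently small enlargement by angle $\beta$ cannot escape this property, using a compactness argument on the unit sphere. First I would recall why $C^* \setminus \{0\}$ lies in the open halfspace determined by $v_0$: since $v_0 \in \interior C$, there is $\delta > 0$ with $B_\delta(v_0) \subset C$, so for any $w \in C^*$ we have $w \cdot (v_0 + \delta w/|w|) \le 0$ whenever $w \ne 0$, i.e. $v_0 \cdot w \le -\delta |w| < 0$. Thus on the unit sphere, the continuous function $w \mapsto v_0 \cdot w$ is bounded above by $-\delta$ on the compact set $C^* \cap S^{n-1}$.

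Next I would set up the enlarged cone and show it shrinks to $C^*$ as $\beta \to 0^+$. Concretely, define $K_\beta$ as in the statement and note that $K_0 = C^*$ and that $\beta \mapsto K_\beta$ is increasing, with $\bigcap_{\beta > 0} K_\beta = C^*$ (any $w$ in all $K_\beta$ satisfies $w \cdot v \le 0$ for all $v \in C$ by letting $\beta \to 0$). The key step is then a contradiction/compactness argument: suppose no $\beta > 0$ works, so for each $m \in \NN$ there is $w_m \in K_{1/m} \cap S^{n-1}$ with $v_0 \cdot w_m \ge 0$. By compactness of $S^{n-1}$, pass to a subsequence $w_m \to w_* \in S^{n-1}$. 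For any fixed $v \in C$ with $|v| = 1$ and any $m' \le m$ we have $w_m \cdot v \le \sin(1/m)$, and letting $m \to \infty$ gives $w_* \cdot v \le 0$; since $v \in C$ was arbitrary (and $C$ is a cone, so we may normalize), $w_* \in C^*$. But also $v_0 \cdot w_* = \lim v_0 \cdot w_m \ge 0$, contradicting the halfspace inclusion from the first paragraph (which gave $v_0 \cdot w_* \le -\delta < 0$ since $|w_*| = 1$). Hence some $\beta > 0$ has the desired property.

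The main obstacle, such as it is, is making the geometric description of $K_\beta$ precise enough that the limiting argument goes through cleanly — in particular verifying that the defining inequality $w \cdot v \le \sin\beta\,|w||v|$ is stable under the limit $w_m \to w_*$ for each fixed $v$, and that one does indeed recover membership in $C^*$ (not merely in its closure, though since $C^*$ is already closed this is automatic). One should also double-check the edge case $n = 1$ or $C$ a halfline, but the compactness argument covers these uniformly. No smoothness or further structure of $C$ beyond "cone with nonempty interior" is needed, so the argument is entirely elementary.
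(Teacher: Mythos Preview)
Your proof is correct and takes a genuinely different route from the paper. The paper argues directly and constructively: since $v_0 \in \interior C$, there is a cone $L_{\beta_0, v_0}$ of half-angle $\beta_0$ about $v_0$ contained in $C$, and then any $0 < \beta < \beta_0$ works. The verification is a planar angle count---given $w \in K_\beta \setminus \{0\}$ not parallel to $v_0$, one finds $v \in \partial C$ in the plane spanned by $v_0$ and $w$, observes that the angle between $v_0$ and $v$ is at least $\beta_0$ while the angle between $v$ and $w$ is at least $\pi/2 - \beta$, and adds. Your compactness argument is arguably cleaner in that it sidesteps the two-dimensional reduction and the mild fuss of locating a suitable boundary point of $C$, but it is nonconstructive and does not produce an explicit $\beta$; the paper's approach gives the quantitative bound $\beta < \beta_0$ for free. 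One cosmetic point: when you invoke $v_0 + \delta w/|w| \in C$ from $B_\delta(v_0) \subset C$, that vector lies on the boundary of the ball, so strictly speaking you should use a slightly smaller radius---of course this is harmless.
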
  
\begin{proof}
Since $v_0$ is in the interior of $C$, there exists $\beta_0 > 0$ such that $L_{\beta_0, v_0} \subset C$ (see the definition of $L_{\beta_0,v_0}$ in \eqref{innercone}). We claim that any $0 < \beta < \beta_0$ has the desired property. To prove the claim, let $w \in K_\beta \setminus \{0\}$. Notice that if $w = c v_0$ for $c \in \RR$, then necessarily $c < 0$ and there is nothing else to do. Otherwise, we can find $0 \neq v \in \partial C$ such that $v_0, v$, and $w$ are coplanar and $v$ lies between $v_0$ and $w$. Such a vector $v \in \partial C$ can be found by letting $v \coloneqq av_0 + bw$ for some choice of $a, b > 0$. Then, since $L_{ \beta_0, v_0} \subset C$, we have that the angle between $v_0$ and $v$ must be at least $\beta_0$. Furthermore, using the definition of $K_\beta$ we see that the angle between $v$ and $w$ must be at least $\pi/2 - \beta$. This shows that the angle between $v_0$ and $w$ is strictly larger than $\pi/2$, and therefore concludes the proof.
\end{proof}

% \begin{prop}[convergence of normals -- \ant{CAN BE DELETED.}]
% If $\eta_k\to\eta$ in $C^1$ and $E(\eta_k)\leq C$, then $\connormal{\eta_k}{\cdot}\to \connormal{\eta}{\cdot}$ uniformly on $\partial Q$ in the Hausdorff distance of compact sets. \ant{distance of cones $C,K$ defined i.e. as distance Hausdorff distance of $C\cap B_1$ and $K\cap B_1$.}
% \end{prop}
% \begin{proof}
% For $x\in\partial Q$, using the previous lemma
% \begin{align}
% d_H(\connormal{\eta_k}{x}, \connormal{\eta}{x})&=\max \Biggr\{\sup_{v\in \connormal Qx} \inf_{w\in \connormal Qx}|[\cof \nabla \eta_k(x)]v-[\cof\nabla\eta_k(x)]w|, \notag \\
% & \qquad \qquad \sup_{w\in \connormal Qx} \inf_{v\in n_Q(x)}|[\cof \nabla \eta_k(x)]v-[\cof\nabla\eta_k(x)]w| \Biggr\} \notag \\ & \leq \sup_{v\in \connormal Qx} |\cof \nabla\eta_k(x) - \cof\nabla\eta(x)| \cdot |v| \notag \\
% & \leq \|\cof \nabla\eta_k-\cof \nabla\eta \|_{\infty}
% \end{align}
% Since the right side converges to zero independently of $x$, we have proven the uniform convergence of $n_{\eta_k}$ to $n_\eta$ in the Hausdorff metric.
% \end{proof}

\section{Description of the geometry and forces at collision}\label{sec:geometry}
In this section we introduce the tools required in our analysis of both quasistatic and dynamic collisions. In the following we let $I \subset \RR$ be a closed and bounded time interval. 

\begin{definition}[Contact set]
\label{contact-set-def}
For every $\eta \in \E$, the \emph{contact set} of $\eta$ is defined via
\begin{equation}
\label{C-eta-def}
\contact{\eta} \coloneqq \{x \in \overline{Q} : \eta(x) \in \partial \Omega \text{ or } \eta^{-1}(\eta(x)) \neq \{x\}\}.
\end{equation}
For time-dependent deformations, that is, if $\eta \colon I \times \overline Q \to \RR^n$ is such that $\eta(t) \in \E$ for all $t \in I$, we define the \emph{contact set} as
\[
C_\eta \coloneqq \{(t,x) \in I\times \overline{Q}: x \in C_{\eta(t)}\},
\]
where $C_{\eta(t)}$ is the contact set of $\eta(t, \cdot)$ as defined in \eqref{C-eta-def}.
\end{definition}

The following lemma collects useful properties of the contact set.  

\begin{lem}
\label{contact-set-properties}
Let $\eta \in \E$ be given and let $C_{\eta}$ be as in \Cref{contact-set-def}. Then the following hold:
\begin{itemize}
\item[$(i)$] $C_{\eta}$ is a closed subset of $\partial Q$; 
\item[$(ii)$] There exists a number $M\in\NN$ that depends only on $Q$ and $E(\eta)$ such that $\eta^{-1}(\eta(x))$ consists of at most $M$ points;
\item[$(iii)$] If $\eta(x) = \eta(y)$ for some $x \neq y$, then $\interior \regtang \eta x \cap \interior \regtang \eta y = \emptyset$;
\item[$(iv)$] If $\eta(x) = \eta(y)$ for some $x\neq y$, then $\abs{x-y} > r$ for some $r>0$ only depending on $E(\eta)$ but not $\eta$ itself.
\end{itemize}
\end{lem}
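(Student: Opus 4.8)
\emph{Overview and setup.} The plan is to prove the items in the order (iv) $\to$ (ii) $\to$ (i) $\to$ (iii): the uniform-in-$\eta$ local injectivity of (iv) is the workhorse behind (i) and (ii), while (iii) needs a separate argument exploiting non-interpenetration. Throughout we may assume $E(\eta) < \infty$, so that by \eqref{E2}, \eqref{E3} and the embedding $W^{2,p}\hookrightarrow C^{1,1-\frac np}$ we have, uniformly on the sublevel set $\{E\le E(\eta)\}$, both $\det\nabla\eta\ge\e_0>0$ on $\overline Q$ and a uniform bound $\|\eta\|_{C^{1,1-\frac np}(\overline Q)}\le C_0$; in particular $\|[\nabla\eta(x)]^{-1}\|\le C_1$ for every $x\in\overline Q$, with $\e_0,C_0,C_1$ depending only on $E(\eta)$. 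For (iv): since $Q$ is Lipschitz there are $\delta_Q>0$, $C_Q\ge 1$, depending only on $Q$, such that any $x,y\in\overline Q$ with $|x-y|<\delta_Q$ can be joined inside $\overline Q$ by a path $\gamma$ of length $\le C_Q|x-y|$ lying in $B_{C_Q|x-y|}(x)$ (local quasiconvexity of Lipschitz epigraphs). Writing $\eta(x)-\eta(y)=\int_\gamma\nabla\eta\,ds$ and splitting off the constant matrix $\nabla\eta(x)$ gives
\[
|\eta(x)-\eta(y)|\ \ge\ |[\nabla\eta(x)](x-y)|-C_0\big(C_Q|x-y|\big)^{1-\frac np}C_Q|x-y|\ \ge\ \tfrac{1}{C_1}|x-y|-C_0C_Q^{2-\frac np}|x-y|^{2-\frac np},
\]
which is strictly positive once $|x-y|<r$ for a suitable $r=r(E(\eta),Q)>0$; hence $\eta(x)=\eta(y)$ with $x\ne y$ forces $|x-y|\ge r$, i.e.\ $\eta$ is injective on $\overline Q\cap B_r(x)$ for every $x$. (This uniform local invertibility is also the content of \cite{Healey2009, Cesik2022} and could simply be quoted.) Item (ii) is then immediate: by (iv) the set $\eta^{-1}(\eta(x))$ is $r$-separated inside the bounded set $\overline Q$, so a volume-packing estimate caps its cardinality by some $M=M(E(\eta),Q)$.

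\emph{Proof of (i).} That $\contact{\eta}\subset\partial Q$: if $x\in Q$ then $\det\nabla\eta(x)>0$ makes $\eta$ a local diffeomorphism near $x$, so $\eta(x)$ is interior to the open set $\eta(Q)\subset\Omega$, whence $\eta(x)\notin\partial\Omega$; moreover the Ciarlet--Ne\v{c}as condition \cite{Ciarlet1987} together with the area formula forces the preimage-counting function of $\eta$ to equal $1$ for a.e.\ value in $\eta(Q)$, while a second preimage of $\eta(x)$ — whether interior, or on $\partial Q$, where $\eta$ is still a local diffeomorphism onto a relatively open piece and hence generates interior preimages of a positive-measure set of nearby values — would push that count to $\ge 2$ on a set of positive measure; thus $\eta^{-1}(\eta(x))=\{x\}$ and $x\notin\contact{\eta}$. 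For closedness, take $\contact{\eta}\ni x_k\to x$: along a subsequence either $\eta(x_k)\in\partial\Omega$ for all $k$, so $\eta(x)\in\partial\Omega$ by continuity; or there are $y_k\ne x_k$ with $\eta(y_k)=\eta(x_k)$, and then $|y_k-x_k|\ge r$ by (iv), so a further subsequence gives $y_k\to y$ with $|y-x|\ge r>0$ and $\eta(y)=\eta(x)$. In both cases $x\in\contact{\eta}$.

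\emph{Proof of (iii).} Let $z\coloneqq\eta(x)=\eta(y)$ and pick $r_x,r_y\in(0,r)$ small enough that $\eta$ is injective on each of $\overline Q\cap\overline{B_{r_x}(x)}$, $\overline Q\cap\overline{B_{r_y}(y)}$ and these two balls are disjoint (possible by (iv)); set $U_x\coloneqq\eta(Q\cap B_{r_x}(x))$, $U_y\coloneqq\eta(Q\cap B_{r_y}(y))$. For suitable radii these are open (as $\eta$ is a local diffeomorphism on $Q$) and themselves Lipschitz domains, being $C^1$-diffeomorphic images of the Lipschitz domains $Q\cap B_{r_x}(x)$, $Q\cap B_{r_y}(y)$; also $\regtang\eta x=\regtang{U_x}{z}$ and $\regtang\eta y=\regtang{U_y}{z}$. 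Suppose toward a contradiction that some $w\ne 0$ lies in $\interior\regtang\eta x\cap\interior\regtang\eta y$. Applying \Cref{regtang-char} to $U_x$ and to $U_y$ at $z$, with base point $\zeta=z$, yields $\e_*,\delta_*>0$ such that the nonempty open cone $\mathcal C\coloneqq\{z+\e v:0<\e<\e_*,\ |v-w|<\delta_*\}$ lies in $\overline{U_x}\cap\overline{U_y}$, hence in $\interior\overline{U_x}\cap\interior\overline{U_y}$. On the other hand, a.e.\ injectivity of $\eta$ makes $U_x\cap U_y$ Lebesgue-null, and being open it must be empty; since $U_x$ is open and dense in $\overline{U_x}$ (likewise for $y$), any nonempty open subset of $\interior\overline{U_x}\cap\interior\overline{U_y}$ would meet both $U_x$ and $U_y$, contradicting $U_x\cap U_y=\emptyset$. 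Hence no such $w$ exists.

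\emph{Main obstacle.} The principal difficulty is (iv) — the local invertibility uniform in $\eta$ over energy sublevels — which rests on extracting from \eqref{E2}--\eqref{E3} a quantitative near-affine behavior of $\eta$ on small balls, together with the local quasiconvexity of the Lipschitz boundary $\partial Q$ (needed because nearby points of $\overline Q$ are in general not joined by a segment inside $\overline Q$). A secondary subtlety lies in (iii): \Cref{regtang-char} only controls membership in the \emph{closures} $\overline{U_x},\overline{U_y}$, so the density argument above is what transfers the contradiction to the open sets $U_x,U_y$ where the non-interpenetration hypothesis (a.e.\ injectivity) can actually be applied.
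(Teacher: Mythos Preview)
Your proof is correct but follows a route that is essentially the reverse of the paper's. The paper cites \cite[Lemma 2.1]{Palmer2018} for $(i)$ and $(iii)$, obtains $(ii)$ from \Cref{lem:lipschitz-unif-tangent} (the uniform interior cones $L_{\vartheta,w_i}\subset\regtang{\eta}{x}$ can only be pairwise interior-disjoint for finitely many $x$), and then deduces $(iv)$ from $(iii)$: if $\eta(x)=\eta(y)$ with $x\neq y$, then by $(iii)$ the points $x,y$ cannot lie in the same patch $G_i$ of the fixed finite Lipschitz cover of $\partial Q$, which forces $|x-y|$ above a threshold depending only on the cover. By contrast, you establish $(iv)$ first by a direct quantitative estimate (H\"older control of $\nabla\eta$ plus local quasiconvexity of $\overline Q$), and then let $(iv)$ drive $(i)$ and $(ii)$; for $(iii)$ you give a self-contained argument via \Cref{regtang-char} and a density step to pass from $\overline{U_x},\overline{U_y}$ back to the open images.

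Both approaches are sound. The paper's order $(iii)\Rightarrow(iv)$ is shorter and highlights that the combinatorics of the fixed Lipschitz cover (\Cref{lem:lipschitz-unif-tangent}) is what really controls multiplicity and separation, independently of any analytic estimate on $\eta$. Your direct proof of $(iv)$, on the other hand, is exactly the argument the paper singles out separately in \Cref{local-inj} as being needed later (precisely because it depends only on $\|\nabla\eta\|_{C^0}$ and the Jacobian lower bound, not on $(iii)$); in that sense your ordering front-loads a fact the paper has to state anyway. Two minor remarks: your use of \Cref{regtang-char} for the image sets $U_x,U_y$ goes slightly beyond the paper's statement (which is phrased for $Q$), but the underlying result \cite[Theorem 6.36]{Rockafellar1998} holds for arbitrary sets, so this is harmless; and your quasiconvexity step is a genuine improvement over the one-line Taylor estimate in \Cref{local-inj}, which tacitly assumes the segment $[x,y]$ lies in $\overline Q$.
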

\begin{proof}
For a proof of the statements in $(i)$ and $(iii)$ we refer the reader to \cite[Lemma 2.1]{Palmer2018} (note the slightly different notation there); $(ii)$ is a direct consequence of \Cref{lem:lipschitz-unif-tangent}. 
 Finally, $(iii)$ implies that $x$ and $y$ cannot lie in the same set $G_i$ of Lemma \ref{lem:lipschitz-unif-tangent}. Since these sets form a finite open cover that is independent of $\eta$, this implies the existence of a minimal distance $r$ as in the statement of $(iv)$. 
\end{proof}

\begin{rmk} \label{local-inj}
We mention here that the uniform local injectivity in \Cref{contact-set-properties} $(iv)$ actually only depends on the regularity and the lower bound on the Jacobian. Since this will be used later in the paper (see the proof of \Cref{lem:strictly-int-admissible}), we recall this argument for the convenience of the reader. 

Assume that $\|\nabla \eta\|_{C^0} < \infty$ and $\inf_Q \det \nabla \eta > 0$. Then, as one can readily check, there exists a constant $c$, depending only on those two quantities, such that $|\nabla \eta(x) v| \ge c|v|$ for all $x \in \overline{Q}$ and all $v \in \RR^n$. Consequently, for every $x, y \in \overline{Q}$ we have that
\begin{equation}
\label{locinj}
|\eta(x) - \eta(y)| \ge |\nabla \eta(x)(x - y)| - o(|x - y|) \ge c|x - y| - o(|x - y|).
\end{equation}
To conclude, observe that the right-hand side of \eqref{locinj} is bounded away from zero whenever $x$ and $y$ are sufficiently close to each other.
\end{rmk}

Recall that every $\sigma \in M(X; \RR^n)$ (that is, every Radon measure on the compact space $X$ with values in $\RR^n$) admits a polar decomposition of the form $d \sigma = g \,d|\sigma|$ in the sense that
\[
\int_X \varphi \cdot d\sigma = \int_X \varphi \cdot g\,d|\sigma|
\] 
for all $\varphi \in C(X; \RR^n)$, where $|\sigma| \in M^+(X)$ is the total variation of $\sigma$ and $g \in L^1(X, |\sigma|; \RR^n)$ is such that $|g| \le 1$. With this at hand, we proceed to define contact forces as follows. 

\begin{definition}[Contact force]
\label{cf-def}
\begin{enumerate}
\item[$(i)$]
Let $\eta \in \E$. A \emph{contact force} for $\eta$ is a vector-valued measure $\sigma \in M(\partial Q;\RR^n)$ with $\supp \sigma \subset \contact\eta$ and the property that it points in the interior normal direction in the sense that the function $g \colon \partial Q \to \RR^n$ in the polar decomposition $d \sigma = g\,d |\sigma|$ satisfies $-g(x) \in \connormal \eta x$ for $\sigma$-a.a.\@ $x \in \contact\eta$. %The polar decomposition $d\sigma=g\,d|\sigma|$ with $|\sigma|\in M^+(\partial Q)$ and $g\colon \partial Q \to \RR^n$ with $|g|\leq 1$ satisfies $-g(x)\in \connormal \eta x$ for $\sigma$-a.e.\@ $x\in \contact\eta$.
\item[$(ii)$]
Let $\eta\colon I\times \overline Q \to \RR^n$ be such that $\eta(t)\in\E$ for every $t\in I$ and assume that $\eta$ is Borel measurable when considered as a mapping from $I$ to $W^{2,p}(Q;\RR^n)$. Then a \emph{contact force} for $\eta$ is a vector-valued measure $\sigma\in M(I\times \partial Q;\RR^n)$ with $\supp \sigma \subset \contact\eta$ and the property that it points in the normal direction in the sense that the function $g \colon I \times \partial Q \to \RR^n$ in the polar decomposition $d\sigma = g\,d|\sigma|$ satisfies $-g(t, x)\in \connormal {\eta} {t,x}$ for $\sigma$-a.e.\@ $(t,x) \in \contact\eta$.
%with $|\sigma|\in M^+(I\times \partial Q)$ and $g\colon I\times \partial Q \to \RR^n$ with $|g|\leq 1$ satisfies $-g(t, x)\in \connormal {\eta} {t,x}$ for $\sigma$-a.e.\@ $(t,x)\in \contact\eta$.
\item[$(iii)$] Additionally, we say that a contact force $\sigma$ \emph{satisfies the action-reaction principle at self-contact} if 
\begin{equation*}
\int_{\partial Q} (\varphi \circ \eta) \cdot d\sigma = 0
\end{equation*}
for all $\varphi \in C_c(\Omega;\RR^n)$. Similarly, for time-dependent deformations, we say that $\sigma$ \emph{satisfies the action-reaction principle at self-contact} if
\begin{equation*}
\int_{ I \times \partial Q} (\varphi \circ \eta) \cdot d\sigma = 0
\end{equation*}
for all $\varphi \in C_c( I  \times \Omega;\RR^n)$, where, to simplify notation we write $(\varphi \circ \eta)(t,x) \coloneqq \varphi(t,\eta(t, x))$.
\end{enumerate}
\end{definition}

Note that, as is consistent with physics, contact forces always point in interior normal direction. However in contrast to \cite{Cesik2022} where we used interior normals, $\connormal{\eta}{x}$ is always pointing in exterior direction, to remain consistent with standard notations in \cite{Rockafellar1998}. Thus, compared to our previous work, the signs in some of the equations are flipped. Note also that the requirement that $\eta$ is Borel measurable in time (see \Cref{cf-def} $(ii)$) is necessary to ensure the compatibility with the Borel measure $\sigma$, without requiring $\eta$ to be continuous with respect to the variable $t$.

\begin{lem}[Characterization of contact forces]\label{cforce-char} 
The following hold.
\begin{itemize}
\item[$(i)$] For $\eta\in \E$, let $\sigma\in M(\partial Q;\RR^n)$ be such that $\supp \sigma\subset \contact\eta$. Then $\sigma$ is a contact force for $\eta$ if and only if
\[
\int_{\partial Q} \varphi \cdot d\sigma \geq 0
\]
for all $\varphi \in C(\partial Q;\RR^n)$ with $\varphi(x)\in \regtang \eta x$ for $x\in \partial Q$.
\item[$(ii)$] Let $\eta \colon I\times \overline Q \to \RR^n$ be such that $\eta(t)\in\E$ for every $t \in I$ and assume that $\eta$ is Borel measurable when considered as a mapping from $I$ to $W^{2,p}(Q;\RR^n)$. Let $\sigma\in M(I\times \partial Q;\RR^n)$ be such that $\supp \sigma\subset \contact\eta$. Then $\sigma$ is a contact force for $\eta$ if and only if
\begin{equation}
\label{dec-char}
\int_{I\times \partial Q} \varphi \cdot d\sigma \geq 0
\end{equation}
for all $\varphi\colon I\times \partial Q$ satisfying $\varphi(t,\cdot)\in C(\partial Q;\RR^n)$, $t\in I$, Borel measurable in $t$ and bounded in $I\times \partial Q$, with $\varphi(t,x)\in \regtang {\eta}{t,x}$ for $(t,x) \in I\times \partial Q$.
\end{itemize}
\end{lem}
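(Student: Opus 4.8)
The plan is to reduce everything to a pointwise statement about cones and then lift it to measures by a standard polar-decomposition argument; part $(ii)$ will follow from $(i)$ by a disintegration/measurable-selection argument in the time variable. For part $(i)$, write $d\sigma = g\,d|\sigma|$ with $|g|\le 1$ $|\sigma|$-a.e. The key observation is the duality identity $\connormal{\eta}{x} = \regtang{\eta}{x}^*$ (this is \Cref{T&N}$(iii)$ transported to the deformed configuration via \Cref{tangent-transform}, or simply the definition of $\connormal{\eta}{x}$), together with its converse $\regtang{\eta}{x}^{**} = \overline{\mathrm{conv}}\,\regtang{\eta}{x}$, which is the closed convex hull; since $\regtang{\eta}{x}$ is already closed and convex (it is the $\liminf$ of tangent cones, hence convex by \cite[Chapter 6]{Rockafellar1998}), we get $\connormal{\eta}{x}^* = \regtang{\eta}{x}$. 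Hence, pointwise, $-g(x)\in\connormal{\eta}{x}$ is equivalent to $g(x)\cdot v \ge 0$ for all $v\in\regtang{\eta}{x}$.

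For the ``only if'' direction of $(i)$, suppose $\sigma$ is a contact force and $\varphi\in C(\partial Q;\RR^n)$ with $\varphi(x)\in\regtang{\eta}{x}$ everywhere. Then $\int_{\partial Q}\varphi\cdot d\sigma = \int_{\partial Q}\varphi(x)\cdot g(x)\,d|\sigma|(x) = \int_{\contact{\eta}}\varphi(x)\cdot g(x)\,d|\sigma|(x) \ge 0$, since the integrand is nonnegative $|\sigma|$-a.e.\ by the pointwise equivalence above and $\supp\sigma\subset\contact{\eta}$. For the ``if'' direction, I would argue by contradiction: if $-g\notin\connormal{\eta}{x}$ on a set $A\subset\contact{\eta}$ of positive $|\sigma|$-measure, then on $A$ there is, by the pointwise duality, some $v_x\in\regtang{\eta}{x}$ with $g(x)\cdot v_x<0$. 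The obstacle here — and the main technical point of the lemma — is to turn this pointwise, $|\sigma|$-measurable selection $x\mapsto v_x$ into a genuinely \emph{continuous} test function $\varphi$ with $\varphi(x)\in\regtang{\eta}{x}$ for \emph{all} $x\in\partial Q$ and $\int\varphi\cdot d\sigma<0$. Here \Cref{lem:lipschitz-unif-tangent}$(ii)$ is exactly the tool: it provides a finite cover $\{G_i\}$ of $\partial Q$ and directions $w_i$ with $L_{\vartheta,w_i}\subset\regtang{\eta}{x}$ for all $x\in G_i$, so the cones are uniformly ``fat'' and a partition of unity subordinate to $\{G_i\}$ lets one glue the constant vectors $w_i$ (or suitable perturbations) into a continuous section of the regular tangent cones; one checks that one can choose such a section that correlates negatively with $g$ on a set of positive measure — e.g.\ by a Lusin-type argument approximating the measurable selection, exploiting that $\regtang{\eta}{\cdot}$ has nonempty interior uniformly, so small continuous perturbations stay inside the cone. (Alternatively one can invoke \cite[Lemma 2.1]{Palmer2018} or the closedness/measurability properties of the cone-valued map; the cleanest route is probably: approximate $g\mathbf{1}_A$ in $L^1(|\sigma|)$ by continuous functions, project the approximants onto the fat inner cones $L_{\vartheta,w_i}$ via the partition of unity, and pass to the limit in the integral inequality.)

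For part $(ii)$, I would deploy the same scheme fibrewise in $t$. Writing $d\sigma = g\,d|\sigma|$ on $I\times\partial Q$, the condition ``$-g(t,x)\in\connormal{\eta}{t,x}$ for $\sigma$-a.e.\ $(t,x)$'' is again, pointwise, equivalent to $g(t,x)\cdot v\ge 0$ for all $v\in\regtang{\eta}{t,x}$. The ``only if'' direction is immediate as before, since any admissible $\varphi$ in \eqref{dec-char} gives an integrand that is $\sigma$-a.e.\ nonnegative (using that $\varphi(t,\cdot)$ is continuous so that $\varphi\cdot g$ is $|\sigma|$-measurable, and bounded so it is integrable). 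For ``if'', suppose the normal cone condition fails on a $|\sigma|$-positive set $A\subset\contact{\eta}\subset I\times\partial Q$. The point is that \Cref{lem:lipschitz-unif-tangent}$(ii)$ gives a cover and fat inner cones $L_{\vartheta,w_i(t)}\subset\regtang{\eta}{t,x}$ where now $w_i(t) = [\nabla\eta(t,x_i)]v_i$ depends measurably on $t$ (via the Borel measurability of $t\mapsto\eta(t)$ in $W^{2,p}$ and the embedding into $C^1$, so $t\mapsto\nabla\eta(t,x_i)$ is Borel). Combining the partition of unity in $x$ with these measurable-in-$t$ directions produces test functions $\varphi$ of exactly the class allowed in \eqref{dec-char} — continuous in $x$, Borel and bounded in $t$, with $\varphi(t,x)\in\regtang{\eta}{t,x}$ — and an approximation/selection argument (Lusin in $(t,x)$, or disintegrating $|\sigma| = \int|\sigma|_t\,d\mu(t)$ and applying the part-$(i)$ construction for a.e.\ $t$, then reassembling) yields $\int_{I\times\partial Q}\varphi\cdot d\sigma<0$, contradicting \eqref{dec-char}. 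The main obstacle throughout is the measurable/continuous selection from the cone-valued maps $\regtang{\eta}{t,x}$; the uniform nonemptiness of their interiors from \Cref{lem:lipschitz-unif-tangent} together with \Cref{regtang-char} is what makes this selection possible in a way that is simultaneously continuous in $x$ and Borel in $t$.
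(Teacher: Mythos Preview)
Your ``only if'' direction matches the paper exactly. For the ``if'' direction, the paper's route is essentially your parenthetical alternative rather than your main line via fat cones and partitions of unity: on the bad set $S$ where $-g\notin\connormal{\eta}{\cdot}$, it first selects a Borel $h$ with $h(t,x)\in\interior\regtang{\eta}{t,x}$ and $h(t,x)\cdot g(t,x)<0$ (the nonemptiness of the interior is where \Cref{lem:lipschitz-unif-tangent} enters, but only to guarantee such $h$ exists); then Lusin produces a continuous $\tilde h$ equal to $h$ on a compact $K\subset S$ of positive $|\sigma|$-measure; then \Cref{regtang-char} --- not the fat cones --- shows that because $\tilde h$ takes \emph{interior} values on $K$, the inclusion $\tilde h(t,\cdot)\in\regtang{\eta}{t,\cdot}$ persists on an open $G_t\supset K_t$; finally a cutoff $\psi$ (continuous in $x$, Borel in $t$) with $\chi_K\le\psi\le\chi_G$ makes $\varphi=\psi\tilde h$ admissible with $\int\varphi\cdot d\sigma<0$. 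Your partition-of-unity idea gluing the $w_i$ would yield \emph{some} continuous section of $x\mapsto\regtang{\eta}{x}$, but there is no mechanism forcing that section to correlate negatively with $g$: the directions $w_i$ are fixed by the geometry of $\partial Q$, while the bad direction at each point is dictated by $g$ and need not lie in $L_{\vartheta,w_i}$. The interior-selection-plus-\Cref{regtang-char} route is what actually closes the gap. For part $(ii)$ the paper does not disintegrate in $t$ but runs the same Lusin-plus-cutoff argument directly on $I\times\partial Q$, which is cleaner than reassembling fibrewise constructions measurably.
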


\begin{proof}
We only present the proof of the time-dependent characterization in statement $(ii)$. Indeed, the proof of $(i)$ follows from analogous (but simpler) arguments.
\newline
\emph{Step 1:} Let $\sigma$ be a contact force for $\eta$. Then, by the definition (see \Cref{cf-def} $(ii)$) we have that $d\sigma=g\,d|\sigma|$, where $-g(t,x) \in \connormal \eta {t,x}$ for $\sigma$-a.e.\@ $(t,x) \in C_{\eta}$. Consequently, by the polarity relation between $\regtang \eta {t,x}$ and $\connormal \eta {t,x}$ (i.e., by the definition of $\connormal \eta {t,x}$ given in \Cref{T&N}), we have that $g(t,x) \cdot \varphi(t,x) \geq 0$ $\sigma$-a.e.\@ holds for every $\varphi$ as in the statement. Therefore, 
\[
\int_{I\times\partial Q} \varphi\cdot d\sigma = \int_{I\times \partial Q} \varphi \cdot g \, d|\sigma| \geq 0,
\]
thus proving that \eqref{dec-char} holds as desired. 
\newline
\emph{Step 2:} Next, we show that any measure measure $\sigma$ with $\supp \sigma\subset \contact\eta$ that satisfies \eqref{dec-char} is a contact force for $\eta$. To this end, arguing by contradiction, assume that $\sigma$ is not a contact force for $\eta$. Then we can find a measurable set $S \subset I \times \partial Q$ with $|\sigma|(S)>0$ with the property that $-g(t,x)\notin \connormal \eta {t,x}$ for $(t,x) \in S$. 

Let $h \colon S\to \RR^n$ be Borel measurable with $h(t,x) \in \interior \regtang \eta {t,x}$ and such that $h(t,x) \cdot g(t,x) < 0$ for $\sigma$-a.e.\@ $(t,x) \in S$. Then, an application of Lusin's theorem yields the existence of a compact set $K\subset S$ with $|\sigma|(K)>0$ and a function $\tilde h \in C(I\times \partial Q;\RR^n)$ such that $h(t,x) = \tilde h(t,x)$ for all $(t,x) \in K$. For $t \in I$, we let $K_t \coloneqq \{x\in\partial Q:(t,x)\in K\}$. Moreover, using the fact that $\tilde h$ is uniformly continuous and by \Cref{regtang-char}, for every $t$ we can find a set $G_t \supset K_t$ that is open with respect to the subspace topology of $\partial Q$ and with the property that $\tilde h(t,x) \in \regtang \eta {t,x}$ for all $x \in G_t$. 

Next, consider a sequence of cutoff functions $\psi_i \colon I \times \partial Q \to [0,1]$ that are Borel measurable in time and satisfy $\psi_i(t) \in C(\partial Q;[0,1])$ and $\chi_K \leq \psi_i \leq \chi_G$, and such that $\psi_i \to \chi_K$ in $L^1(I \times \partial Q, \sigma)$ as $i \to \infty$. We can take, for example, $\psi_i(t,x) \coloneqq \max \{1-i\dist(x,K_t),0\}$ for $1/i \leq \dist (K_t,\partial Q \setminus G_t)$. 
Then, denoting $\varphi_i \coloneqq \psi_i \tilde h$ and recalling that $h(t,x) \cdot g(t,x)<0$ on $K$, we obtain that
\[
\int_{I\times \partial Q} \varphi_i \cdot d\sigma = \int_{I\times \partial Q} \psi_i  \tilde h \cdot g \,d|\sigma| \to \int_K h\cdot g\,d|\sigma|<0.
\]
In particular, the same inequality holds for $\varphi \coloneqq \varphi_{n_0}$, for some $i_0 \in \NN$ that is sufficiently large. Notice that $\varphi$ is Borel measurable in time, $\varphi(t) \in C(\partial Q;\RR^n)$, $\varphi(t,x)\in \regtang \eta {t,x}$ for all $(t,x)\in \partial Q$ and that moreover it satisfies
\[
\int_{\partial Q}\varphi \cdot d\sigma < 0.
\]
Thus, we have reached a contradiction to $\eqref{dec-char}$. This concludes the proof.
\end{proof}

%\begin{thm}[Compactness-closure of contact forces]\label{compactness-closure-static}
%Let $\eta_k\in\E$ with $E(\eta_k)\leq E_0$ be a sequence of deformations with $\eta_k\to \eta$ in $C^1(Q;\RR^n)$ with $\eta\in\E$ and $E(\eta)\leq E_0$. Let $\sigma_k$ be a contact force for $\eta_k$ with $\|\sigma_k\|_{M(\partial Q;\RR^n)}\leq C$. Then there exists a subsequence and a limit $\sigma$ with $\sigma_k \weakstarto \sigma$, such that $\sigma$ is a contact force for $\eta$. Moreover, if each $\sigma_k$ satisfies the action-reaction principle at self-contact, then so does $\sigma$.
%\end{thm}
%
%\begin{thm}[Compactness-closure, time-dependent]\label{compactness-closure-time}
%Let $\eta_k,\eta\colon I\times \overline Q \to \RR^n$ be such that $\eta_k(t),\eta(t)\in\E$ for all $t\in I$ and $k\in\NN$ and also $E(\eta_k(t))\leq E_0$, $E(\eta(t))\leq E_0$, and $\eta_k$ is Borel measurable in time. Furthermore assume $\eta_k(t)\to \eta(t)$ in $C^1(\overline Q;\RR^n)$ uniformly in $t\in I$, and assume further $\eta\in C(I;C^{1,\alpha}(\overline Q;\RR^n))$.
%For every $k\in \NN$, let $\sigma_k\in M(I\times\partial Q;\RR^n)$ be a contact force for $\eta_k$ and assume $\|\sigma_k\|_{M(I\times \partial Q;\RR^n)}\leq C$.
%
%Then there exits a subsequence (not relabelled) and a limit $\sigma \in M(\partial Q;\RR^n)$ such that $\sigma_k\weakstarto \sigma$ in $M(\partial Q;\RR^n)$, such that the limit $\sigma$ is a contact force for $\eta$. Moreover, if each $\sigma_k$ satisfies the action-reaction principle at self-contact, then so does $\sigma$.
%\end{thm}

Compactness and closure properties of contact forces will play a fundamental role in \Cref{sec:qs} and \Cref{sec:inertial}, where existence results are obtained via a limiting process involving approximate solutions. 

\begin{thm}[Compactness-closure of contact forces]
\label{compactness-closure-time}
The following hold.
\begin{itemize}
\item[$(i)$] Let $\{\eta_k\}_k \subset \E$ be a sequence of deformation with $E(\eta_k) \leq E_0$ for some $E_0 > E_{\min}$ and assume that there exists $\eta \in \E$ with $E(\eta)\leq E_0$ such that $\eta_k \to \eta$ in $C^1(Q;\RR^n)$. For every $k$, let $\sigma_k$ be a contact force for $\eta_k$ with 
\[
\sup_k \|\sigma_k\|_{M(\partial Q;\RR^n)} < \infty.
\]
Then there exist a subsequence (which we do not relabel) and a limit measure $\sigma$ such that $\sigma_k \weakstarto \sigma$ in $M(\partial Q; \RR^n)$. Moreover, $\sigma$ is a contact force for $\eta$ and if each $\sigma_k$ satisfies the action-reaction principle at self-contact, then so does $\sigma$.
\item[$(ii)$] Let $\{\eta_k\}_k$ be a sequence of time-dependent deformations, that is, for every $k$ we have that $\eta_k \colon I \times \overline{Q} \to \RR^n$ is Borel measurable in time and such that $\eta_k(t) \in \E$ and $E(\eta_k(t)) \le E_0$ hold for all $t \in I$. Furthermore, assume that $\eta_k(t) \to \eta(t)$ in $C^1(\overline Q;\RR^n)$ uniformly in $t$, where $\eta \in C(I;C^{1,\alpha}(\overline Q;\RR^n))$ is such that $\eta(t) \in \E$ and $E(\eta(t)) \le E_0$ for all $t \in I$. For every $k$, let $\sigma_k\in M(I\times\partial Q;\RR^n)$ be a contact force for $\eta_k$ with 
\[
\sup_k\|\sigma_k\|_{M(I\times \partial Q;\RR^n)} < \infty.
\] 
Then there exist a subsequence (which we do not relabel) and a limit measure $\sigma$ such that $\sigma_k \weakstarto \sigma$ in $M(I \times \partial Q;\RR^n)$. Moreover, $\sigma$ is a contact force for $\eta$ and if each $\sigma_k$ satisfies the action-reaction principle at self-contact, then so does $\sigma$.
\end{itemize}
\end{thm}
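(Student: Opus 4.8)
The plan is to obtain $\sigma$ by weak-$*$ compactness and then verify the three defining properties of a contact force in turn: the support condition $\supp\sigma\subset C_\eta$, the normal-direction property, and the action--reaction principle. Parts $(i)$ and $(ii)$ run in parallel, with the time-dependent case $(ii)$ needing some extra care that I single out at the end. Since $\partial Q$ (resp.\ $I\times\partial Q$) is compact and $\sup_k\|\sigma_k\|_M<\infty$, a subsequence satisfies $\sigma_k\weakstarto\sigma$ in $M(\partial Q;\RR^n)$ (resp.\ $M(I\times\partial Q;\RR^n)$); this much is immediate.

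For the support inclusion I would first observe that $C_\eta$ is closed and that the contact sets are upper semicontinuous along the sequence. Concretely, if $(t_0,x_0)\notin C_\eta$ I would pick a neighbourhood $U$ with $\overline U\cap C_\eta=\emptyset$ and show, by contradiction, that $U\cap C_{\eta_k}=\emptyset$ for all large $k$: a sequence of contact points $(s_k,y_k)\in U\cap C_{\eta_k}$ would subconverge to some $(s,y)\in\overline U$, and by the uniform $C^0$-convergence $\eta_k\to\eta$ one gets $\eta(s,y)\in\partial\Omega$ in the obstacle-contact case, while in the self-contact case the companion points $z_k$ obey $|y_k-z_k|>r$ with $r>0$ depending only on $E_0$ (\Cref{contact-set-properties}~$(iv)$), so along a further subsequence $z_k\to z$ with $|y-z|\ge r$ and $\eta(s,y)=\eta(s,z)$; either way $(s,y)\in C_\eta$, a contradiction. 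Then $|\sigma_k|(U)=0$ for large $k$, and lower semicontinuity of mass on open sets under weak-$*$ convergence forces $|\sigma|(U)=0$, hence $(t_0,x_0)\notin\supp\sigma$. (The closedness of $C_\eta$ follows from the same compactness argument, again using \Cref{contact-set-properties}~$(iv)$.)

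For the normal-direction property the key device is a change-of-frame trick built on \Cref{tangent-transform}: given a test function $\tilde\varphi$, continuous on $\partial Q$ (resp.\ jointly continuous on $I\times\partial Q$), with $\tilde\varphi\in\regtang{\eta}{\cdot}$, I would set $\varphi_k\coloneqq[\nabla\eta_k][\nabla\eta]^{-1}\tilde\varphi$. By \Cref{tangent-transform}, $[\nabla\eta]^{-1}\tilde\varphi$ takes values in $\regtang{Q}{\cdot}$, hence $\varphi_k$ takes values in $\regtang{\eta_k}{\cdot}$ and is an admissible test function in \Cref{cforce-char}, so $\int\varphi_k\cdot d\sigma_k\ge 0$; moreover $\varphi_k\to\tilde\varphi$ uniformly, the inverse matrices $[\nabla\eta]^{-1}$ being uniformly bounded thanks to \eqref{E2} and the uniform $C^1$-control of $\eta$, while $\nabla\eta_k\to\nabla\eta$ uniformly by hypothesis. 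Combining this with $\sup_k\|\sigma_k\|_M<\infty$ and weak-$*$ convergence yields $\int\tilde\varphi\cdot d\sigma=\lim_k\int\varphi_k\cdot d\sigma_k\ge 0$. In case $(i)$, \Cref{cforce-char}~$(i)$ then gives directly that $\sigma$ is a contact force for $\eta$. The action--reaction principle passes to the limit by an even simpler version of the same reasoning: if each $\sigma_k$ satisfies it, then for $\varphi\in C_c(\Omega;\RR^n)$ (resp.\ $C_c(I\times\Omega;\RR^n)$) the maps $\varphi\circ\eta_k$ converge uniformly to $\varphi\circ\eta$, which is jointly continuous, and one concludes $\int(\varphi\circ\eta)\cdot d\sigma=\lim_k\int(\varphi\circ\eta_k)\cdot d\sigma_k=0$.

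The step I expect to be the main obstacle is the normal-direction property in case $(ii)$: the argument above only tests $\sigma$ against functions that are \emph{jointly} continuous in $(t,x)$, whereas the characterization in \Cref{cforce-char}~$(ii)$ demands inequality \eqref{dec-char} for the strictly larger class of functions merely Borel in $t$, and joint continuity is the best one can extract from weak-$*$ testing (the $\eta_k$ themselves being only Borel in time). To bridge this gap I would argue by contradiction along the lines of the proof of \Cref{cforce-char}~$(ii)$: assuming $\sigma$ is not a contact force, pick a Borel selection $h(t,x)\in\interior\regtang{\eta}{t,x}$ with $h\cdot g<0$ on a set of positive $|\sigma|$-measure, use Lusin's theorem to extract a positive-measure compact piece $K$ and a jointly continuous extension $\tilde h$, and then invoke \Cref{regtang-char} together with the joint continuity of $(t,x)\mapsto\nabla\eta(t,x)$ on the compact set $I\times\partial Q$ (using $\eta\in C(I;C^{1,\alpha})$ and the uniform non-degeneracy of the tangent cones from \Cref{lem:lipschitz-unif-tangent}) to produce a \emph{single} open set $G\supset K$ on which $\tilde h(t,x)\in\regtang{\eta}{t,x}$; a jointly continuous Urysohn cutoff supported in $G$, multiplied by $\tilde h$ and with its support shrunk towards $K$, then yields a jointly continuous admissible test function against which $\sigma$ integrates negatively, contradicting the inequality obtained above. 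A secondary delicate point, already visible in the support step, is that the inclusion $\supp\sigma\subset C_\eta$ relies essentially on the $\eta$-uniform local injectivity of \Cref{contact-set-properties}~$(iv)$, without which contact could in principle persist in the limit at points where $\eta$ has none.
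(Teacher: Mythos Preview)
Your proposal follows the same route as the paper: weak-$*$ compactness, support inclusion via the uniform local injectivity of \Cref{contact-set-properties}~$(iv)$, normal direction via the change-of-frame $\varphi_k=[\nabla\eta_k][\nabla\eta]^{-1}\varphi$ combined with \Cref{tangent-transform}, and action--reaction by uniform convergence of $\varphi\circ\eta_k$. Your support argument is simply the contrapositive of the paper's.

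The one point worth flagging is the issue you single out as the main obstacle in case~$(ii)$. The paper applies the characterization of \Cref{cforce-char}~$(ii)$ directly to test functions $\varphi$ that are merely Borel in $t$, writing
\[
\int\varphi\cdot d\sigma=\int\varphi\cdot d(\sigma-\sigma_k)+\int(\varphi-\varphi_k)\cdot d\sigma_k+\int\varphi_k\cdot d\sigma_k
\]
and asserting the first term vanishes; but weak-$*$ convergence in $M(I\times\partial Q;\RR^n)$ only guarantees this against jointly continuous $\varphi$, so the paper is tacitly relying on the fact that under the hypothesis $\eta\in C(I;C^{1,\alpha})$ it suffices to check \eqref{dec-char} for such test functions. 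Your Lusin-based reduction---rerunning Step~2 of the proof of \Cref{cforce-char}~$(ii)$ with a jointly continuous Urysohn cutoff, available because the time-continuity of $\nabla\eta$ lets you choose a single open $G\supset K$ in $I\times\partial Q$ rather than time-slices $G_t$---makes this reduction explicit. So your treatment at this spot is in fact more careful than the paper's own writeup.
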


\begin{proof}
We present the proof of $(ii)$. The proof of the time-independent statement in $(i)$ is analogous but simpler, and therefore we omit it. %In order to keep the notation as simple as possible, in the following we choose to not relabel subsequences. 

% In view of the energy bound and \eqref{E3} we have that (eventually extracting a subsequence) $\eta_k \overset{\ast}{\rightharpoonup} \eta$ in $L^\infty(I;W^{2,p}(Q;\RR^n))$ and \gio{$\eta_k(t) \to \eta(t)$ in $C^{1,\alpha}(Q;\RR^n)$ uniformly in $I$ - is this true/needed?}. \noteMalte{The convergence was already an assumption, so I removed it} Furthermore, 
By the weak compactness of measures (eventually extracting a subsequence) we have that $\sigma_k \weakstarto \sigma$. Thus, it remains to verify that $\sigma$ is a contact force for $\eta$.

We begin by showing that $\supp \sigma \subset C_\eta$. We mention here that the proof of this fact follows from the same argument used in \cite[Theorem 3.9]{Cesik2022}. Indeed, for $(t,x)\in \supp \sigma$, using the fact that $\sigma_k \rightharpoonup \sigma$, we have that there are points $(t_k,x_k) \in \supp \sigma_k$ such that $(t_k,x_k) \to (t,x)$. Since $\supp \sigma_k \subset C_{\eta_k}$ by assumption, there are now two cases: Either there is a subsequence such that $\eta_k(t_k,x_k) \in \partial \Omega$, or there exist points $y_k \neq x_k \in \partial Q$ with $\eta_k(t_k,x_k) = \eta_k(t_k,y_k)$. In the first case, the uniform convergence of $\eta_k$ and the uniform continuity of $\eta$ imply that $\eta_k(t_k,x_k) \to \eta(t,x) \in \partial \Omega$ and thus $(t,x) \in \contact\eta$. In the second case, recall that by \Cref{contact-set-properties} $(iv)$ there exists a minimal distance $r > 0$ such that $|x_k - y_k| \geq r$ holds for every $k$. Eventually extracting a further subsequence, we have $y_k \to y \in \partial Q$. As above, the uniform convergence of $\eta_k$ and the uniform continuity of $\eta$ imply that $\eta(t,x) = \eta(t,y)$ with $x \neq y$. Hence, also in this case we have shown that $(t,x)\in \contact{\eta}$. We note here that even though (in contrast to \cite{Cesik2022}) there can be more than two points touching at the same location, this complication is readily circumvented by taking a subsequence of $y_k$.

With this at hand, we can now show that $\sigma$ is indeed a contact force for $\eta$ by using the characterization in \Cref{cforce-char} $(ii)$. To this end, fix $\varphi \colon I\times \partial Q\to \RR^n$ as in \Cref{cforce-char} $(ii)$ and let $\varphi_k$ be defined via
\[
\varphi_k(t,x) \coloneqq \nabla \eta_k(t, x)(\nabla\eta (t,x) )^{-1} \varphi(t,x).
\]
Then, in view of \Cref{tangent-transform} we have that $\varphi_k(t,x) \in \regtang{\eta_k}{t,x}$ and furthermore, by the uniform convergence of $\nabla \eta_k \to \nabla \eta$ in $I \times \partial Q$, we obtain that $\varphi_k \to \varphi$ uniformly in $I \times \partial Q$. Since $\sigma_k$ is a contact force for $\eta_k$, by \Cref{cforce-char} $(ii)$ we have that
\begin{equation*}
\int_{I\times \partial Q} \varphi_k(t,x) \cdot d\sigma_k(t,x) \geq 0.
\end{equation*}
Next, observe that
\begin{equation*}
\int_{I\times \partial Q} \varphi \cdot d\sigma =
 \int_{I\times \partial Q} \varphi \cdot d(\sigma -\sigma_k)+\int_{I\times \partial Q} (\varphi-\varphi_k) \cdot d\sigma_k  + \int_{I\times \partial Q} \varphi_k \cdot d\sigma_k,
\end{equation*}
and that, passing to the limit as $k \to \infty$, the first two terms on the right-hand side go to zero. %since $\sigma_k\weakstarto\sigma$, the second can be estimated by $C\|\varphi-\varphi_k\|_{L^\infty}(I\times \partial Q)$ and therefore also vanishes in the limit. 
This shows that 
\[
\int_{I\times \partial Q} \varphi \cdot d\sigma = \lim_{k \to \infty} \int_{I \times \partial Q} \varphi_k \cdot d\sigma_k \ge 0,
\] 
and in turn, once again by \Cref{cforce-char} $(ii)$, we obtain that $\sigma$ is a contact force for $\eta$.

Finally, for any $\varphi \in C_c(I\times\Omega;\RR^n)$ we have that $\varphi\circ \eta_k\to \varphi\circ \eta$ uniformly in $I\times \partial Q$. This, together with the convergence $\sigma_k \weakstarto \sigma$, shows that if $\sigma_k$ satisfies the action-reaction principle for every $k$, then the action-reaction principle continues to hold in the limit.
\end{proof}
We conclude this section by noting that $L^2$-in-time estimates remain true in the weak$^*$ limit. 
\begin{lem}\label{cforce-l2intime}
Let $\sigma_k\in L_{w^*}^2( I ;M(\partial Q;\RR^n))$ with $\|\sigma_k\|_{L^2_{w^*}( I ;M(\partial Q;\RR^n))}\leq C$ be such that $\sigma_k \weakstarto \sigma$ in $M(  I \times \partial Q;\RR^n)$. Then $\sigma\in L_{w^*}^2( I ;M(\partial Q;\RR^n))$ and $\|\sigma\|_{L_{w^*}^2( I ;M(\partial Q;\RR^n))}\leq C$.
\end{lem}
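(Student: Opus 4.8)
The plan is to realize $L^2_{w^*}(I;M(\partial Q;\RR^n))$ as a dual Banach space with \emph{separable} predual, so that the uniform bound produces a weak-$*$ convergent subsequence with controlled norm, and then to identify its limit with the given $\sigma$ by testing against continuous functions.

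First I would recall the standard duality $L^2_{w^*}(I;M(\partial Q;\RR^n)) = \bigl(L^2(I;C(\partial Q;\RR^n))\bigr)^*$, where the pairing of $\mu$ with $\varphi \in X \coloneqq L^2(I;C(\partial Q;\RR^n))$ is $\langle \mu,\varphi\rangle = \int_I \int_{\partial Q} \varphi(t,x)\cdot d\mu(t)(x)\,dt$, and under this identification the dual norm of $\mu$ equals $\bigl(\int_I \|\mu(t)\|_{M(\partial Q;\RR^n)}^2\,dt\bigr)^{1/2} = \|\mu\|_{L^2_{w^*}(I;M(\partial Q;\RR^n))}$; note that $X$ is separable because $C(\partial Q;\RR^n)$ is. I would also record two elementary compatibility facts. (a) Every $\mu \in L^2_{w^*}(I;M(\partial Q;\RR^n))$ induces a (finite, since $I$ is bounded) vector-valued Radon measure on $I\times\partial Q$, still denoted $\mu$, via $\int_{I\times\partial Q}\psi\cdot d\mu = \int_I \int_{\partial Q}\psi(t,x)\cdot d\mu(t)(x)\,dt$ for bounded Borel $\psi$. (b) Every $\varphi \in C(I\times\partial Q;\RR^n)$ belongs to $X$ (with $\|\varphi\|_{X}\le |I|^{1/2}\|\varphi\|_{C(I\times\partial Q;\RR^n)}$, using uniform continuity of $\varphi$ on the compact product), and for such $\varphi$ the pairing $\langle\mu,\varphi\rangle$ agrees with $\int_{I\times\partial Q}\varphi\cdot d\mu$ in the sense of (a).

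Next, since $\|\sigma_k\|_{X^*} = \|\sigma_k\|_{L^2_{w^*}(I;M(\partial Q;\RR^n))} \le C$, the sequential Banach--Alaoglu theorem (applicable because $X$ is separable) yields a subsequence (not relabeled) with $\sigma_k \weakstarto \tilde\sigma$ in $X^*$, and weak-$*$ lower semicontinuity of the dual norm gives $\|\tilde\sigma\|_{X^*} \le \liminf_k \|\sigma_k\|_{X^*} \le C$. For every $\varphi \in C(I\times\partial Q;\RR^n)$, fact (b) together with the hypothesis $\sigma_k \weakstarto \sigma$ in $M(I\times\partial Q;\RR^n)$ gives
\[
\langle\tilde\sigma,\varphi\rangle = \lim_{k\to\infty}\langle\sigma_k,\varphi\rangle = \lim_{k\to\infty}\int_{I\times\partial Q}\varphi\cdot d\sigma_k = \int_{I\times\partial Q}\varphi\cdot d\sigma.
\]
Viewing $\tilde\sigma$ as a measure on $I\times\partial Q$ via fact (a), the measures $\tilde\sigma$ and $\sigma$ thus agree against all $\varphi \in C(I\times\partial Q;\RR^n)$, so by the Riesz representation theorem $\tilde\sigma = \sigma$. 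Hence $\sigma \in L^2_{w^*}(I;M(\partial Q;\RR^n))$ and $\|\sigma\|_{L^2_{w^*}(I;M(\partial Q;\RR^n))} = \|\tilde\sigma\|_{X^*} \le C$, as claimed.

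I expect the only genuinely delicate point to be the bookkeeping in the two identifications — that $L^2_{w^*}(I;M(\partial Q;\RR^n))$ is simultaneously the dual of $L^2(I;C(\partial Q;\RR^n))$ (needed for sequential weak-$*$ compactness and for lower semicontinuity of the $L^2_{w^*}$ norm) and a subspace of $M(I\times\partial Q;\RR^n)$ (needed to even compare the two notions of weak-$*$ limit), with the two pairings coinciding on $C(I\times\partial Q;\RR^n)$. Once these are in place the argument is a routine application of Banach--Alaoglu and Riesz representation; in particular there is no ambiguity from passing to a subsequence, since the full sequence $\sigma_k$ already converges to $\sigma$ in $M(I\times\partial Q;\RR^n)$, so any weak-$*$ subsequential limit in $X^*$ must equal $\sigma$.
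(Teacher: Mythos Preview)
Your argument is correct and follows the expected route. Note, however, that the paper does not actually give a self-contained proof of this lemma: it simply writes ``For a proof and a discussion of the weak$^*$ measurability involved, we refer to \cite[Remark 3.12, Lemma 3.13]{Cesik2022}.'' The duality $L^2_{w^*}(I;M(\partial Q;\RR^n)) = \bigl(L^2(I;C(\partial Q;\RR^n))\bigr)^*$ together with sequential Banach--Alaoglu on the separable predual, followed by identification of the subsequential limit with $\sigma$ via testing against $C(I\times\partial Q;\RR^n)$, is precisely the standard way to establish this kind of statement, and there is every reason to expect it is what the cited reference does as well. Your bookkeeping on the two identifications (dual of a Bochner space versus subspace of measures on the product) is accurate, and the subsequence issue is indeed immaterial for the stated conclusion.
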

For a proof and a discussion of the weak$^*$ measurability involved, we refer to \cite[Remark 3.12, Lemma 3.13]{Cesik2022}.

\section{Admissible directions and test functions}\label{sec:adm}
In this section we study the space of admissible directions. These are, roughly speaking, infinitesimal perturbations of the deformed configuration that do not cause self-interpenetration and for which $\eta(\cdot, \overline{Q}) \subset \overline{\Omega}$. To be precise, we let 
\begin{multline}
\label{AD-def}
T_{\eta}(\E) \coloneqq \{\varphi \in W^{2, p}(Q; \RR^n) : \exists \e_1 > 0 \text{ and } \\ \Phi \in C([0, \e_1); \E) \cap C^1([0, \e_1); W^{2, p}(Q; \RR^n)) \text{ with } \Phi(0) = \eta \text{ and } \Phi'(0^+) = \varphi \}.
\end{multline}
The following result is adapted from \cite[Proposition 3.1]{Palmer2018} and provides a useful characterization of a large subset of $T_{\eta}(\E)$.

\begin{lem}[Strictly interior directions are admissible]
\label{lem:strictly-int-admissible}
For $\eta \in \E$, let
\begin{multline}
\label{SID}
T^0_{\eta}(\E) \coloneqq \{ \varphi \in W^{2, p}(Q;\RR^n) : \varphi|_\Gamma = 0, \varphi(x) \in \interior \regtang\eta x \text{ for all } x \text{ with } \eta(x) \in \partial \Omega, \text{ and } \\ \varphi(x) - \varphi(y) \in \interior \regtang\eta x - \interior \regtang \eta y \text{ for all } x \neq y \text{ with } \eta(x) = \eta(y)\}.
\end{multline} 
Then $\Admr{\eta}{} \subset \Adm{\eta}{}$.
\end{lem}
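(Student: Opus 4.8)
The plan is to prove the slightly stronger statement that for every $\varphi \in \Admr{\eta}{}$ the affine path $\Phi(s) \coloneqq \eta + s\varphi$ lies in $\E$ for all sufficiently small $s \ge 0$. Since $s \mapsto \eta + s\varphi$ is manifestly of class $C^{\infty}$ from $[0,\infty)$ into $W^{2,p}(Q;\RR^n)$ with $\Phi(0) = \eta$ and $\Phi'(0^+) = \varphi$, this immediately yields $\varphi \in \Adm{\eta}{}$. Among the defining conditions of $\E$, membership in $W^{2,p}(Q;\RR^n)$ and the boundary condition $\Phi(s)|_\Gamma = \eta_0$ are trivial (the latter because $\varphi|_\Gamma = 0$); moreover, since $\det\nabla\eta \ge \e_0 > 0$ on the compact set $\overline Q$ and $\nabla\varphi$ is bounded, we have $\det\nabla\Phi(s) > 0$ on $\overline Q$ for all small $s$, with a bound independent of $s$. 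Finally, by the area formula the Ciarlet--Ne\v{c}as identity $\mathcal{L}^n(\Phi(s)(Q)) = \int_Q \det\nabla\Phi(s)$ is equivalent to $\Phi(s)$ being injective outside a null subset of $Q$. Thus everything reduces to producing $s_0 > 0$ such that, for $s \in [0,s_0)$, both $\Phi(s)(Q) \subset \Omega$ and $\Phi(s)$ is almost everywhere injective on $Q$.

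For the almost everywhere injectivity, I would first record, via \Cref{local-inj}, a radius $r_0 > 0$, independent of $s$ for $s$ small, such that $\Phi(s)$ is injective on $\overline Q \cap B_{r_0}(x)$ for every $x \in \overline Q$; hence any violation of a.e.\ injectivity forces a pair $x \ne y$ in $Q$ with $|x - y| \ge r_0$ and $\Phi(s)(x) = \Phi(s)(y)$. Arguing by contradiction, suppose no admissible $s_0$ exists: then there are $s_k \to 0^+$ and such pairs $x_k \ne y_k$, and, passing to a subsequence, $x_k \to \bar x$, $y_k \to \bar y$ with $|\bar x - \bar y| \ge r_0$ and, by the uniform convergence $\Phi(s_k) \to \eta$, also $\eta(\bar x) = \eta(\bar y) =: z_0$; by \Cref{contact-set-properties}~$(i)$ both $\bar x, \bar y \in \partial Q$. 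Fix a small $r < r_0$ with $B_r(\bar x) \cap B_r(\bar y) = \emptyset$ and set $A \coloneqq \overline{\eta(\overline Q \cap B_r(\bar x))}$ and $B \coloneqq \overline{\eta(\overline Q \cap B_r(\bar y))}$, so that $\regtang \eta {\bar x} = \regtang A {z_0}$ and $\regtang \eta {\bar y} = \regtang B {z_0}$. By definition of $\Admr{\eta}{}$ we may write $\varphi(\bar x) - \varphi(\bar y) = a - b$ with $a \in \interior\regtang A{z_0}$ and $b \in \interior\regtang B {z_0}$. Now \Cref{regtang-char} (whose underlying statement \cite[Theorem 6.36]{Rockafellar1998} is valid for arbitrary closed sets) yields $\delta > 0$ and $k_0$ such that $B_{\delta s_k}(\eta(x_k) + s_k a) \subset A$ and $B_{\delta s_k}(\eta(y_k) + s_k b) \subset B$ for all $k \ge k_0$. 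Since the identity $\Phi(s_k)(x_k) = \Phi(s_k)(y_k)$ reads $\eta(x_k) - \eta(y_k) = s_k(\varphi(y_k) - \varphi(x_k))$, the two centres differ by $s_k\big[(\varphi(\bar x) - \varphi(x_k)) + (\varphi(y_k) - \varphi(\bar y))\big] = o(s_k)$, so for $k$ large a ball of radius comparable to $s_k$ around $\eta(x_k) + s_k a$ is contained in $A \cap B$. This is impossible: the open sets $\eta(Q \cap B_r(\bar x))$ and $\eta(Q \cap B_r(\bar y))$ are disjoint (otherwise $\eta$ would be at least two-to-one on a set of positive measure, contradicting the Ciarlet--Ne\v{c}as condition for $\eta$ itself) and dense in $A$ and $B$ respectively, whence $\interior A \cap \interior B = \emptyset$. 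This contradiction proves the a.e.\ injectivity of $\Phi(s)$ for small $s$.

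The containment $\Phi(s)(Q) \subset \Omega$ follows from the same scheme. If it failed along some $s_k \to 0^+$, pick $x_k \in Q$ with $\Phi(s_k)(x_k) \notin \Omega$; then $\Phi(s_k)(x_k) = \eta(x_k) + s_k\varphi(x_k) \to \eta(\bar x)$ (up to a subsequence $x_k \to \bar x$), and since $\RR^n \setminus \Omega$ is closed while $\eta(\overline Q) \subset \overline\Omega$, the limit $z_0 \coloneqq \eta(\bar x)$ lies in $\partial\Omega$; this forces $\bar x \in \partial Q$ and, by definition of $\Admr{\eta}{}$, $\varphi(\bar x) \in \interior\regtang\eta{\bar x} = \interior\regtang A{z_0}$ with $A \coloneqq \overline{\eta(\overline Q \cap B_r(\bar x))}$. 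Applying \Cref{regtang-char} at $z_0$ in the direction $\varphi(\bar x)$ (using $\eta(x_k) \in A$ and $\eta(x_k) \to z_0$) gives $\delta>0$ with $B_{\delta s_k}(\eta(x_k) + s_k\varphi(x_k)) \subset A \subset \eta(\overline Q) \subset \overline\Omega$ for $k$ large, hence $\Phi(s_k)(x_k) \in \interior\overline\Omega = \Omega$ (recall $\Omega$ is Lipschitz), a contradiction. Choosing $s_0$ below the thresholds of both parts completes the proof.

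I expect the a.e.\ injectivity step to be the main obstacle: the point is to convert the purely infinitesimal, cone-level non-interpenetration information encoded in $\Admr{\eta}{}$ into a quantitative separation at the finite scale $s$ of the perturbation, which is exactly what the uniform ``pointing inward'' characterization \Cref{regtang-char} is for. The structural observation that makes the estimate close is that one should transfer the analysis to the two image sheets $A$ and $B$ at the contact point — whose interiors are disjoint for the elementary reason that $\eta$ is almost everywhere injective — rather than reasoning with the tangent cones directly, where only the weaker disjointness of \Cref{contact-set-properties}~$(iii)$ is available.
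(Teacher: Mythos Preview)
Your proof is correct and follows the same overall scheme as the paper (affine path $\eta+s\varphi$, uniform local injectivity, contradiction via sequences accumulating at a contact pair), but the core contradiction step is organised differently and, in fact, more economically. The paper splits into three cases according to whether $\eta(y_i)-\eta(x_i)$ lies in $\interior\regtang\eta x-\interior\regtang\eta y$, is zero, or is neither; the third case is handled via the auxiliary enlarged--polar--cone \Cref{lem:larger-polar}. You bypass this trichotomy entirely: writing $\varphi(\bar x)-\varphi(\bar y)=a-b$ with $a\in\interior\regtang A{z_0}$, $b\in\interior\regtang B{z_0}$ and using \Cref{regtang-char} on the image sheets $A,B$ produces balls of radius $\sim s_k$ in $A$ and $B$ whose centres are $o(s_k)$ apart (this is exactly where the collision identity $\eta(x_k)-\eta(y_k)=s_k(\varphi(y_k)-\varphi(x_k))$ is spent), forcing $\interior A\cap\interior B\neq\emptyset$; you then close by the clean observation that interior injectivity of $\eta$ makes the open sets $\eta(Q\cap B_r(\bar x))$ and $\eta(Q\cap B_r(\bar y))$ disjoint and dense in $A$, $B$. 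This buys you a shorter argument that never needs \Cref{lem:larger-polar}, at the price of working with the full image sets rather than the tangent cones and of invoking \Cref{regtang-char} in its general form for closed subsets of $\RR^n$ rather than only for $\overline Q$. The containment step $\Phi(s)(Q)\subset\Omega$ is treated in the same spirit; your use of $\interior\overline\Omega=\Omega$ for Lipschitz $\Omega$ is legitimate. One cosmetic point: the ball you obtain from \Cref{regtang-char} is centred at $\eta(x_k)+s_k\varphi(\bar x)$, not at $\eta(x_k)+s_k\varphi(x_k)$; the shift is $o(s_k)$ so a ball of half the radius around the latter still works, but it would be cleaner to say so explicitly.
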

\begin{proof}
Let $\varphi \in \Admr{\eta}{}$ be given and set $\eta_\epsilon \coloneqq \eta + \epsilon \varphi$. In the following we will show that the function $\Phi(\e) \coloneqq \eta_{\e}$ then satisfies the conditions of \eqref{AD-def}.

First of all, observe that $\eta_{\e} = \eta_0$ when restricted to $\Gamma$. Next, recall that by \eqref{E2} there exists $\e_0 > 0$ that only depends on $E(\eta)$ such that $\det \nabla \eta \geq \epsilon_0$ in $\overline{Q}$. Since $\varphi \in C^{1,\alpha}(Q)$, we have that 
\begin{equation}
\label{lb-etae}
\det \nabla \eta_\epsilon \ge \frac{\epsilon_0}{2},
\end{equation}
provided that $\e$ is sufficiently small. Reasoning as in \Cref{local-inj}, we also obtain that, for all $\e$ such that \eqref{lb-etae} holds, $\eta_\epsilon$ is injective when restricted to balls of radius $r$, where $r$ only depends on $E(\eta)$.

We now claim that $\eta_\epsilon$ is (globally) injective on $\overline Q$ for all $\epsilon >0$ small enough. Indeed, arguing by contradiction, assume that there are a monotone decreasing sequence $\e_i \to 0^+$ and points $x_i \neq y_i \in \overline Q$ such that $\eta_{\epsilon_i}(x_i) = \eta_{\epsilon_i}(y_i)$. By the compactness of $\overline Q$ (up to the extraction of a subsequence, which we do not relabel) we can assume that $x_i \to x$ and $y_i \to y$. We now consider three cases for $\eta(x_i)-\eta(y_i)$ and see that none of them can happen for a subsequence, resulting in a contradiction.

\textit{Case 1: \begin{equation}\eta(y_i)-\eta(x_i) \in \interior \regtang \eta{x}-\interior \regtang \eta{y}.
\end{equation}}
Since the local injectivity of $\eta_{\e_i}$ for $i$ large enough implies that $|x_i - y_i| \ge r$, then necessarily we must have that $|x-y| \ge r$. Using the fact that $\eta_{\e_i} \to \eta$ uniformly we obtain that $\eta(x) = \eta(y)$, and by \Cref{contact-set-properties} $(i)$ we conclude that $x, y \in \partial Q$. Note that if $\eta(y_i) - \eta(x_i) \in \interior \regtang \eta x - \interior\regtang \eta y$ holds for infinitely many values of $i$, then by \Cref{regtang-char} there exists a subsequence such that $\eta(y_i) - \eta(x_i) \in \interior \regtang \eta {x_i} - \interior \regtang\eta{y_i}$ for all $i$ large enough. This means, by \Cref{regtang-char}, that $\eta(B_{r/2}(x_i)\cap Q)$ and $\eta(B_{r/2}(y_i)\cap Q)$ intersect, in contradiction with the interior injectivity of $\eta$, \Cref{contact-set-properties} $(i)$.

\textit{Case 2: \begin{equation}\label{case2} \eta(x_i)=\eta(y_i).\end{equation}} Observe that if \eqref{case2} holds, then we have that $\varphi(x_i) - \varphi(y_i) \in \interior \regtang \eta{x_i}-\interior \regtang \eta{y_i}$ by \eqref{SID}, and therefore $\eta_{\epsilon_i}(x_i) \neq \eta_{\epsilon_i}(y_i)$, as guaranteed by \Cref{contact-set-properties} $(iii)$.

\textit{Case 3: \begin{equation}
\label{proj-not0}
\eta(y_i) - \eta(x_i) \notin (\interior \regtang \eta x - \interior \regtang \eta y) \cup \{0\}.
\end{equation}}
Let $K_{\beta}$ be the enlarged polar cone given by \Cref{lem:larger-polar} with $C = \regtang \eta x - \regtang \eta y$ and $v_0 = \varphi(x) - \varphi(y) \in \interior \regtang \eta x - \interior \regtang \eta y$. Then, for all $w \in K_{\beta}$ we have that
\begin{equation}
\label{enlarged-N}
(\varphi(x) - \varphi(y)) \cdot w < 0.
\end{equation}
Let $p_i$ be the projection of $\eta(y_i) - \eta(x_i)$ onto $K_{\beta}$ and observe $p_i \neq 0$ by \eqref{proj-not0}. Then, using the fact that $p_i \cdot (\eta(y_i) - \eta(x_i)) \ge 0$, we obtain that
\begin{equation}
\label{dot-ei}
\frac{p_i}{|p_i|}\cdot (\eta_{\epsilon_i}(y_i) - \eta_{\epsilon_i}(x_i)) \ge \epsilon_i \frac{p_i}{|p_i|} \cdot (\varphi(y_i)-\varphi(x_i)).
\end{equation}
Notice that (eventually extracting a subsequence) $p_i/|p_i| \to p \in K_{\beta} \setminus \{0\}$, and therefore, by \eqref{enlarged-N} we have that
\begin{equation}
\label{dot>0limit}
\frac{p_i}{|p_i|} \cdot (\varphi(y_i)-\varphi(x_i)) \to p \cdot (\varphi(y)-\varphi(x)) > 0.
\end{equation}
In particular, combining \eqref{dot-ei} and \eqref{dot>0limit}, we obtain that 
\begin{equation}
\label{dot>0i}
 \frac{p_i}{|p_i|}\cdot (\eta_{\epsilon_i}(y_i) - \eta_{\epsilon_i}(x_i)) > 0
\end{equation}
holds for all $i$ sufficiently large. Finally, \eqref{dot>0i} implies that $\eta_{\epsilon_i}(x_i) \neq \eta_{\epsilon_i}(y_i)$.

Therefore we have reached the desired contradiction and thus shown that $\eta_{\epsilon}$ is injective on $\overline Q$ for all $\e$ sufficiently small.

 %\noteMalte[inline]{I don't really get what is happening in this proof, can we do it in a better way? In particular, I don't see the geometry anymore and $\eta(x_i)-\eta(y_i)$ feels not like a quantity having to do anything with $\interior \regtang{\eta}{x} -  \dots$. What about something like this: For $x_i,y_i$ as above wlog. $x_i,y_i \in \partial Q$ and by continuity, $\varphi(x_i)-\varphi(y_i) \in \interior \regtang{\eta}{x} - \interior \regtang{\eta}{y}$ for $i$ large, thus $\varphi(x_i)-\varphi(y_i) \in \tang{\eta}{x_i} - \tang{\eta}{y_i}$, which means (after a translation) both shift in the interior, which is a contradiction.}
A similar but simpler argument can be used to show that $\eta_{\epsilon}(\overline Q) \subset \Omega$. Indeed, the obstacle $\RR^n \setminus \Omega$ can be treated as a non-moving part of the solid. This concludes the proof.
\end{proof}

\begin{figure}[ht] 
\centering
\begin{subfigure}{.4\textwidth}
\begin{tikzpicture}
 \path (-3,2)--(3,-2);
 \draw (-3,1) -- (0,0) -- (-3,-1);
 \fill[pattern=north east lines, pattern color=black!40] (-3,1) -- (0,0) -- (-3,-1);
 \draw (3,1) -- (0,0) -- (3,-1);
 \fill[pattern=north west lines, pattern color=black!40] (3,1) -- (0,0) -- (3,-1);
 \node[above] at (-0.1,0) {$\eta(x)$};
 \node[below] at (0.1,0) {$\eta(y)$};
 \node at (-2,0) {$\eta(Q)$};
\end{tikzpicture}
\end{subfigure}
\hfill 
\begin{subfigure}{.4\textwidth}
\begin{tikzpicture}
 \draw (-3,1) -- (0,0) -- (-3,-1);
 \fill[pattern=north east lines, pattern color=black!40] (-3,1) -- (0,0) -- (-3,-1) --(-3,-2)-- (3,-2)--(3,2) -- (-3,2) -- (-3,1);
 \draw[dashed] (3,1) -- (0,0) -- (3,-1);
 \fill[pattern=north west lines, pattern color=black!40] (3,1) -- (0,0) -- (3,-1);
 \node[above] at (0,1) {$\varphi \in T_\eta(\E)$};
 \node at (2,0) {$\varphi \in T_\eta^0(\E)$};
\end{tikzpicture}
\end{subfigure}
 \caption{
\label{fig:cornersTang} Comparison of $T_\eta^0(\E)$ and $T_\eta(\E)$ as described in \Cref{rem:testCones}. The first image shows two parts of the solid touching at their corners $x,y$ with $\eta(x)=\eta(y)$. For $y$ as preimage of the right side, the second image illustrates the possible values of $\varphi(y)$.}
\end{figure}
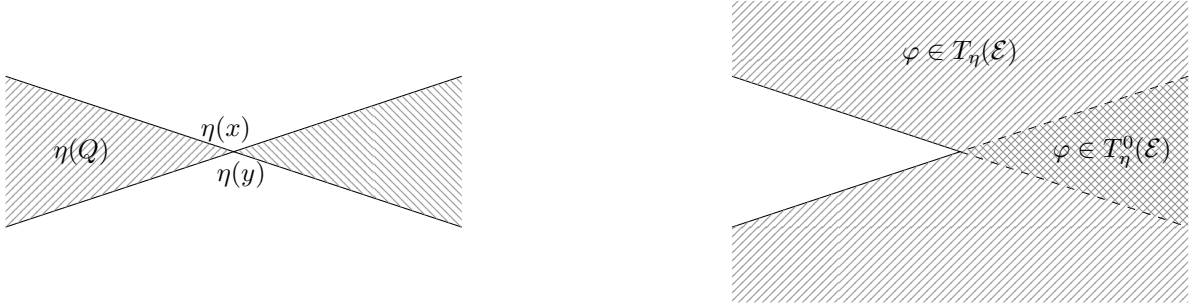

\begin{rmk} \label{rem:testCones}
Note that, in general, the inclusion in \Cref{lem:strictly-int-admissible} is strict. To see this, consider the case of a head-on collision where the contact happens between two outside corners of otherwise smooth, well-separated surfaces. Then the cones $\regtang{\eta}{x}$ and $\regtang{\eta}{y}$ have the same opening angles and the resulting set $T_{\eta}^0(\E)$ is relatively small when compared to $T_{\eta}(\E)$, that is, the set of all the directions it is possible to move in.
 
 In contrast, if we replace one of the these angles with the complement of the other one, creating an inside corner, then at the corner the regular tangent cones will in fact not change, but now the closure of $T_{\eta}^0(\E)$ is precisely the set of directions it is possible to move in. 
 
 In fact, this shows that it is not possible to give a pointwise characterization of $\Adm{\eta}{}$ in terms of the regular tangent cone. It is reasonable to ask if there is a better characterization in terms of e.g.\@ a different definition of tangent cones but to the best of our knowledge there seems to be no way to do so, as the set of admissible directions ultimately depends on the complete behavior (e.g.\@ oscillations and the directions they occur in) of the boundary in a neighborhood of the contact point.
 
 For our purposes however, neither is such characterization actually required, as the preceding lemma gives us a sufficient amount of admissible directions for all required estimates, nor would having such a characterization be particularly helpful. Indeed, while it would allow for a stronger control on the directions of the Lagrange multipliers in the initial minimization, these are not the contact forces occurring in the final equation, which arise only as limits, whose behavior is better captured using regular normal and tangent cones. 
 
 Additionally, since it is not convex, characterizing the set of admissible directions actually does not characterize the set of admissible test functions for the equation. Consider for example the above case of two angles meeting. These can ``slide'' along each other in two directions, so both of these directions provide admissible test functions. But then so does their average, even though moving in that average direction immediately results in an overlap.
 
 All of this is in contrast to the case of a smooth boundary (cmp.\@ \cite{Cesik2022}), where there is no need to distinguish different cones, as all different types of tangential cones for a given point will be the same half-space and there is only ever a single normal direction.
\end{rmk}

To allow us to better quantify contact forces in the proof we now require a specific type of test function.

\begin{definition}
\label{unif-int-def}
Given $\eta\in \E$, we say that $\tilde t_\eta \colon \overline Q \to \RR^n$ is a \emph{uniformly interior vector field} for $\eta$, if there is an angle $\vartheta \in (0, \pi/2)$ and a constant $0<c\leq 1$ such that
\[
\tilde t_\eta(x) \cdot n \leq -\sin \vartheta |\tilde t_\eta(x)||n| \quad \text{and }\quad  c\leq|\tilde t_\eta(x)| =1 \qquad \text{for all} \quad  x\in \partial Q  \text{ and } n \in \connormal \eta x,
\]
and $|\tilde t| $ bounded on $\overline Q$. Similarly, given $\eta\in L^\infty( I; \mathcal E)\cap W^{1,2}( I ;W^{1,2}(Q;\RR^n))$ with $E(\eta)\in L^\infty( I )$, we say that $\tilde t_\eta \colon  I \times \overline Q\to \RR^n$ is a \emph{uniformly interior vector field} for $\eta$, if there is an angle $\vartheta \in (0, \pi/2)$ such that for all $t\in I$
\[
\tilde t_\eta(t,x) \cdot n \leq -\sin \vartheta |\tilde t_\eta(t,x)| |n| \quad \text{and }\quad  |\tilde t_\eta(t,x)| =1 \qquad \text{for all} \quad  x\in \partial Q  \text{ and } n \in \connormal \eta x,
\]
and $|\tilde t(t,\cdot)|$ bounded on $\overline{Q}$.
\end{definition}

Note that the polarity of $\regtang \eta x$ and $\connormal \eta x$ and the strict positivity of $\sin \vartheta$ imply that $\tilde{t}_{\eta}(x)\in \interior\regtang \eta x$. To be precise, the definition implies that the cone $L_{\vartheta, \tilde{t}_{\eta}(x)} \subset \regtang \eta x$ (see \eqref{innercone}). We show below that for any deformation $\eta$ there exists a \emph{smooth} uniformly interior vector field. 

\begin{prop}
\label{smooth-unif-int}
The following hold.
\begin{itemize}
\item[$(i)$] For every $\eta \in \E$ there exists a uniformly interior vector field $\tilde{t}_{\eta}$ in the sense of \Cref{unif-int-def} with $\tilde t_\eta \in C^{k_0}(\overline Q;\RR^n)$ and such that $\|\tilde t_\eta\|_{C^{k_0}(\overline Q;\RR^n)}\leq C$ for all $k_0\in \NN$, where $C$ is a constant that depends only on $E(\eta)$ and $k_0$.
\item[$(ii)$] For every $\eta \in L^\infty(I;\E)\cap W^{1,2}(I;W^{1,2}(Q;\RR^n))$ with $\|E(\eta)\|_{L^\infty(I)} \leq E_0$ there exists a uniformly interior vector field $\tilde t_\eta$ in the sense of \Cref{unif-int-def} with $\tilde t_\eta \in C(I;C^{k_0}(\overline Q;\RR^n))$ and such that $\|\tilde t_\eta\|_{C(I;C^{k_0}(\overline Q;\RR^n)} \leq C_{k_0}$ for all $k_0\in\NN$, where $C_{k_0}$ is a constant that depends only on $E_0$ and $k_0$. Moreover it can be chosen so that $\tilde t_\eta \in W^{1,2}(I;L^2(Q;\RR^n))$ with the estimate $\|\partial_t \tilde t_\eta\|_{L^2(I\times Q;\RR^n)}\leq C\| \partial_t\nabla \eta \|_{L^2 (I\times Q;\RR^n)}$ with $C$ depending on $E_0$.
\end{itemize}
\end{prop}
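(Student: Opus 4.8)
The strategy is to build $\tilde t_\eta$ by first constructing a uniformly interior field on the reference configuration $Q$ and then transforming it via the deformation using \Cref{tangent-transform}. For part $(i)$: by \Cref{lem:lipschitz-unif-tangent} $(i)$ there is a finite cover $\{G_1,\dots,G_k\}$ of $\partial Q$, directions $v_1,\dots,v_k$, and an angle $\theta$ (depending only on $Q$) with $L_{\theta,v_i}\subset\regtang{Q}{x}$ for all $x\in\Gamma_i=\partial Q\cap G_i$. On each patch, $v_i$ itself lies in $\interior\regtang{Q}{x}$ with a uniform opening angle, so a smooth partition of unity $\{\chi_i\}$ subordinate to $\{G_i\}$ lets us set $t_Q \coloneqq \sum_i \chi_i v_i$ near $\partial Q$ (extended smoothly into $Q$). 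The key elementary observation is that a convex combination of vectors each making angle $<\pi/2-\theta$ with every normal at $x$ again makes angle $<\pi/2-\theta'$ with every normal at $x$, for some $\theta'>0$ depending only on $\theta$ and $k$ (since the normal cone $\connormal{Q}{x}$ is convex, and one can bound below the inner product of the normalized sum with any unit normal). This gives a smooth uniformly interior field $t_Q$ on $\overline Q$ with constants depending only on $Q$. Then define $\tilde t_\eta(x)\coloneqq [\nabla\eta(x)]t_Q(x)/|[\nabla\eta(x)]t_Q(x)|$. By \Cref{tangent-transform}, $\regtang{\eta}{x}=[\nabla\eta(x)]\regtang{Q}{x}$ and $\connormal{\eta}{x}=[\cof\nabla\eta(x)]\connormal{Q}{x}$; using $\nabla\eta\,(\cof\nabla\eta)^T=(\det\nabla\eta)\mathrm I$ one checks that $[\nabla\eta]w \cdot [\cof\nabla\eta]n = (\det\nabla\eta)\, w\cdot n$, so the sign and a quantitative angle bound for $\tilde t_\eta$ against $\connormal{\eta}{x}$ follow from the corresponding bound for $t_Q$ against $\connormal{Q}{x}$, with the loss of angle controlled by $\|\nabla\eta\|_{C^0}$ and $\inf\det\nabla\eta$, hence by $E(\eta)$ through \eqref{E2}, \eqref{E3} (and the embedding $W^{2,p}\hookrightarrow C^{1,1-n/p}$). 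Smoothness and the $C^{k_0}$ bound are immediate since $t_Q$ is fixed and smooth and $\eta\in W^{2,p}$; if one wants genuinely $C^{k_0}$ in $x$ one may mollify $\nabla\eta$ slightly, or alternatively note that the statement really only needs the bound, so a mollification of $\tilde t_\eta$ at a scale depending on the uniform angle surplus does the job without destroying the interior-cone property.

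For part $(ii)$, the construction is the same applied at each time: set $\tilde t_\eta(t,x)\coloneqq [\nabla\eta(t,x)]t_Q(x)/|[\nabla\eta(t,x)]t_Q(x)|$ with the same fixed $t_Q$. Since $E(\eta(t))\le E_0$ for a.e.\ (indeed all) $t$, the angle and $C^{k_0}$ bounds hold uniformly in $t$ with constants depending only on $E_0$ and $k_0$. Continuity in $t$ with values in $C^{k_0}(\overline Q)$ follows from $\eta\in C_w([0,T];W^{2,p})$ together with the local strong convergence built into the assumptions — more simply, $\eta \in W^{1,2}(I;W^{1,2}(Q;\RR^n))$ and the $W^{2,p}$-bound give, via interpolation/compactness, that $t\mapsto\nabla\eta(t)$ is continuous into $C^0(\overline Q)$, and then $t\mapsto t_Q$ being fixed and the normalization being smooth away from $0$ transfers this to $\tilde t_\eta\in C(I;C^{k_0})$. (Here one uses that on a sublevel set $|[\nabla\eta(t,x)]t_Q(x)|$ is bounded away from $0$ uniformly, by \Cref{local-inj}/\eqref{E2}, so the normalization is Lipschitz.)

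The remaining point in $(ii)$ is the $W^{1,2}$-in-time bound $\|\partial_t\tilde t_\eta\|_{L^2(I\times Q)}\le C\|\partial_t\nabla\eta\|_{L^2(I\times Q)}$. For this I would differentiate the explicit formula $\tilde t_\eta = N(\nabla\eta\, t_Q)$ where $N(z)=z/|z|$ is smooth on $\{|z|\ge c_0\}$ with $|DN(z)|\le C/c_0$ there, and $c_0$ is the uniform lower bound on $|\nabla\eta\,t_Q|$ coming from \eqref{E2} and $|t_Q|\ge c>0$. The chain rule gives $\partial_t\tilde t_\eta = DN(\nabla\eta\,t_Q)[\,(\partial_t\nabla\eta)\,t_Q\,]$ pointwise a.e.\ in $(t,x)$ (legitimate since $\eta\in W^{1,2}(I;W^{1,2}(Q))$ and $t_Q$ is a fixed Lipschitz field, so $\nabla\eta\,t_Q\in W^{1,2}(I;L^2(Q))$ and $N$ is Lipschitz on the relevant range), whence $|\partial_t\tilde t_\eta|\le (C/c_0)\|t_Q\|_{C^0}|\partial_t\nabla\eta|$ and the estimate follows by integrating over $I\times Q$, with $C$ depending only on $E_0$ (through $c_0$) and on the fixed $t_Q$ (through $Q$).

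\textbf{Main obstacle.} The conceptually delicate step is the quantitative angle estimate: showing that convex-combining finitely many directions, each uniformly interior on its patch, and then pushing forward by $\nabla\eta$ and normalizing, still yields a \emph{uniform} strict angle $\vartheta>0$ against the full convexified normal cone $\connormal{\eta}{x}$, with $\vartheta$ depending only on $E_0$ and $Q$ — in particular not on $\eta$ nor on the point $x$. This requires carefully using the convexity of $\connormal{Q}{x}$, the uniform opening angle $\theta$ from \Cref{lem:lipschitz-unif-tangent}, the duality $[\nabla\eta]w\cdot[\cof\nabla\eta]n=(\det\nabla\eta)w\cdot n$, and the uniform two-sided bounds on $\nabla\eta$ on the sublevel set $\{E\le E_0\}$; the bookkeeping of how much angle is lost at each step (partition of unity, linear transformation, renormalization) is where the real work lies, whereas the time-regularity claims are comparatively routine once the explicit formula $\tilde t_\eta=N(\nabla\eta\,t_Q)$ is in hand.
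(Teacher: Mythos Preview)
Your proposal is correct and follows essentially the same route as the paper: build a smooth reference field $t_Q=\sum_i\psi_i v_i$ via a partition of unity subordinate to the cover from \Cref{lem:lipschitz-unif-tangent}, push it forward by $\nabla\eta$ and normalize, then mollify in space at a scale controlled by the angle surplus to gain $C^{k_0}$ regularity, and finally obtain the $W^{1,2}$-in-time bound by the chain rule applied to $z\mapsto z/|z|$ on $\{|z|\ge c_0\}$. The one technical point you leave implicit is that the spatial mollification requires first extending $\eta$ (and hence $t_\eta$) to a $\delta$-neighbourhood $Q_\delta$ of $Q$ so that the convolution is defined up to $\partial Q$; the paper does this explicitly and then checks that the mollified field still satisfies the angle condition with $\vartheta/2$ in place of $\vartheta$, but this is precisely the ``scale depending on the uniform angle surplus'' you allude to.
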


\begin{proof}
We only present the proof of $(ii)$. The proof of the time-independent statement in $(i)$ follows from a similar argument (see also \cite[Proposition 3.1]{Palmer2018}).
Let $\{G_1,\dots, G_k\}$ be a covering of $\partial Q$ with corresponding points $x_i \in G_i$ and directions $v_i \in S^{n - 1}$ and $\vartheta$ for $E_0$ given as in the proof of \Cref{lem:lipschitz-unif-tangent}. Choose $\delta>0$ so small that $\{G_1,\dots, G_k\}$ covers also $P_\delta$, the $\delta$-neighborhood of $\partial Q$. Note that this $\delta$ still depends only on $Q$ and $E_0$. 
Now let $\{\psi_1,\dots,\psi_k\}$ be a partition of unity on $P_\delta$ subordinated to the covering $\{G_1,\dots,G_k\}$. To be precise, $\psi_i \in C^\infty_c(G_i,[0,1])$ for every $i = 1, \dots, k$ and $\sum_{i=1}^k \psi_i(x)=1$ for all $x \in P_\delta$.

Next, we construct $\tilde t_Q \in C^\infty(Q_\delta;\RR^n)$, where $Q_\delta$ is the $\delta$-neighborhood of $Q$, by setting (we consider each $\psi_i$ to be extended outside $G_i$ by $0$)
\begin{equation*}
\tilde t_Q (x)\coloneqq \sum_{i=1}^k \psi_i (x) v_i,\quad x\in Q_\delta.
\end{equation*}
Then $\tilde t_Q$ is a smooth uniformly interior vector field to $Q$ with angle $\vartheta$, moreover satisfying $ c\leq |\tilde t_Q|\leq 1$ on $P_\delta$, where $0<c\leq 1$ depends only on $Q$.
%\noteMalte[inline]{Not true, as the $v_i$ are different. We can make it true by adding a small factor but will have to do so again later when the convolution comes into play. I would rather use something like $|\tilde{t_Q}| > 1-\varepsilon$, which should be good enough for all we need, but I do not want to change it without asking, as it might break something.}

We consider an extension of $\eta$ to $I\times Q_\delta$ preserving all the norms, in particular the $L^\infty(I;W^{2,p}(Q_\delta;\RR^n))$ and $W^{1,2}(I;W^{1,2}(Q_\delta;\RR^n))$ norms, up to a constant.  With this define $$t_\eta(t,x) = \frac{\nabla\eta(t,x)  \,\tilde t_Q(x) }{|\nabla\eta(t,x)  \,\tilde t_Q(x)|} |\tilde t_Q(x)|, \quad t\in I, \quad x\in Q_\delta.$$ 
Because of the uniform lower bound on the Jacobian which can be extended to $Q_\delta$ for $\delta$ small enough, we see that $ t_\eta \in C(I;C^{1,\alpha}(Q_\delta;\RR^n))$ with H\"older seminorm dependent only on $E_0$. 
Further, we see by an application of the chain rule, that 
$$\|\partial_t  t_\eta (t)\|_{L^2(Q_\delta;\RR^n)}\leq C\|\partial_t \nabla \eta \|_{L^2(Q;\RR^n)} $$ 
with $C$ depending on $E_0$.

Now choose $\tilde \delta>0$, possibly smaller than $\delta$ but only dependent on $E_0$, such that it holds  for all $t\in I$ and all $x\in \partial Q$ and $\tilde x\in Q_\delta$ with $|x-\tilde x|\leq \tilde \delta$ that $$t_\eta(t,x)\cdot t_\eta(t,\tilde x)\geq \cos (\vartheta/2).$$ 

We mollify in space, that is put $\tilde t_\eta \coloneqq t_\eta*\xi_\delta$, where $*$ is the convolution in space and $\xi_\delta$ the smooth mollification kernel with radius $\delta$. Because of the above choice of $\tilde \delta$, one can readily check that $\tilde t_\eta$ is a uniformly interior field for $\eta$ in the sense of \Cref{unif-int-def}, with the angle $\vartheta/2$.
Finally, regarding the regularity of $\tilde t_\eta$, we see that for each $t\in I$ we have
$$\|\tilde t_\eta(t)\|_{C^{k_0}(\overline Q;\RR^n)}\leq C_{k_0}\|t_\eta(t)\|_{C(\partial Q;\RR^n)}$$
with $C_{k_0}$ depending on $E_0$ and $k_0$, and
$$ \|\partial_t \tilde t_\eta (t)\|_{L^2(Q;\RR^n)}\leq  C \| \partial_t t_\eta (t)\|_{L^2(Q;\RR^n)}$$
with $C$ depending on $E_0$, which combining with the above inequalities finishes the proof.
 \end{proof}

Next, we show that the set of strictly interior directions, namely $\Admr{\eta}{}$, is well-behaved with respect to sequences of approximating deformations.

\begin{prop} 
\label{prop:TErecoveryLocal}
The following hold.
\begin{itemize}
\item[$(i)$] Let $\{\eta_k\}_k \subset \E$ be given and assume that there exists $\eta \in \E$ such that $\eta_k \to \eta$ in $C^1(\overline{Q}; \RR^n)$. Then, for every $\varphi \in \Admr{\eta}{}$ there exists a sequence $\{\varphi_k\}_k$ such that $\varphi_k \to \varphi$ in $W^{2, p}(Q; \RR^n)$ and with the property that $\varphi_k \in \Admr{\eta_k}{}$ for all $k$ sufficiently large.
\item[$(ii)$] Let $\{\eta_k\}_k \subset L^{\infty}(I; \E)$ be given and assume that there exists $\eta \in W^{1, 2}(I; W^{2, p}(Q; \RR^n))$ with $\|E(\eta)\|_{L^{\infty}(I)} \le E_0$ such that $\eta_k \to \eta$ and $\nabla \eta_k \to \nabla \eta$ uniformly on $I \times \overline{Q}$. Then, for every $\varphi \in C(I; \Admr{\eta}{})$ there exists a sequence $\{\varphi_k\}_k$ such that $\varphi_k \to \varphi$ in $C(I; W^{2, p}(Q; \RR^n))$ and with the property that $\varphi_k \in C(I; \Admr{\eta_k}{})$ for all $k$ sufficiently large. Moreover, let $J \subset I$ and assume that $\varphi \in C(I; \Admr{\eta}{}) \cap C^1_c(J; L^2(Q; \RR^n))$. Then there exists a sequence $\{\varphi_k\}_k$ as above but with the additional property that $\varphi_k \in C(I; \Admr{\eta_k}{}) \cap C^1_c(J; L^2(Q; \RR^n))$ for all $k$ sufficiently large.
\end{itemize}
\end{prop}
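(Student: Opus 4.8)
My plan is to take the recovery sequence to be the \emph{constant} one, $\varphi_k \coloneqq \varphi$, and to prove that $\varphi$ itself already belongs to $\Admr{\eta_k}{}$ (resp.\ to $C(I;\Admr{\eta_k}{})$, resp.\ to $C(I;\Admr{\eta_k}{})\cap C^1_c(J;L^2(Q;\RR^n))$) for all $k$ sufficiently large. Once this is shown, the asserted convergence $\varphi_k\to\varphi$ and the preservation of the $C^1_c$-structure are trivial, so the entire content of the proposition becomes the stability of the three defining conditions in \eqref{SID} under the convergence $\eta_k\to\eta$. The condition $\varphi|_\Gamma = 0$ is insensitive to $\eta$, so only the two cone conditions need attention.

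The key preliminary — and essentially the only place the Lipschitz regularity of $\partial Q$ is used — is the topological fact that $\{(x,w)\in\partial Q\times\RR^n : w\in\interior\regtang{Q}{x}\}$ is \emph{open}. This is immediate from \Cref{regtang-char}: if $w\in\interior\regtang{Q}{x_0}$ with parameters $\epsilon_0,r,\delta$ as in statement $(ii)$ there, then the same inward-pushing property holds with parameters $\epsilon_0,r/2,\delta/2$ at every $x\in B_{r/2}(x_0)$ and for every direction in $B_{\delta/2}(w)$, so $B_{r/2}(x_0)\times B_{\delta/2}(w)$ lies in the set. Combined with the transformation rule $\regtang{\eta}{x}=[\nabla\eta(x)]\regtang{Q}{x}$ (\Cref{tangent-transform}) and continuity of matrix inversion, this shows in turn that the sets $\{(A,x,w): w\in A\,\interior\regtang{Q}{x}\}$ and $\{(A,B,x,y,w): w\in A\,\interior\regtang{Q}{x}-B\,\interior\regtang{Q}{y}\}$ (over invertible $A,B$) are open, hence have \emph{closed} complements; for the difference set, given $w_0 = Aa_0-Bb_0$ one keeps $b'\coloneqq b_0$ and sets $a'\coloneqq (A')^{-1}(w'+B'b_0)$, which stays in $\interior\regtang{Q}{x'}$ by the openness just established.

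With this in hand, both parts follow by the same compactness/contradiction scheme. For part $(i)$, if $\varphi\notin\Admr{\eta_k}{}$ along a subsequence, then for each such $k$ the failure occurs either (a) at a point $x_k$ with $\eta_k(x_k)\in\partial\Omega$ and $\varphi(x_k)\notin\interior\regtang{\eta_k}{x_k}$, or (b) at a pair $x_k\neq y_k$ with $\eta_k(x_k)=\eta_k(y_k)$ and $\varphi(x_k)-\varphi(y_k)\notin\interior\regtang{\eta_k}{x_k}-\interior\regtang{\eta_k}{y_k}$. In case (b), uniform local injectivity (\Cref{local-inj}, which needs only $\|\nabla\eta_k\|_{C^0}$ bounded and $\inf_Q\det\nabla\eta_k>0$ — both guaranteed by $C^1$-convergence together with \eqref{E2}) forces $|x_k-y_k|\ge r>0$; passing to a further subsequence, $x_k\to x_*$ and $y_k\to y_*$ with $\eta(x_*)=\eta(y_*)$ and $|x_*-y_*|\ge r$, so $(x_*,y_*)$ is a self-contact pair of $\eta$ and hence $\varphi(x_*)-\varphi(y_*)\in\interior\regtang{\eta}{x_*}-\interior\regtang{\eta}{y_*}$ by \eqref{SID}; but by the uniform convergence of $\eta_k,\nabla\eta_k$, the continuity of $\varphi$, and the closedness from Step~2, the limit must instead lie in the complement of $\interior\regtang{\eta}{x_*}-\interior\regtang{\eta}{y_*}$ — a contradiction. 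Case (a) is the same but simpler, using that $\eta_k(x_k)\in\partial\Omega$ and $\eta_k(x_k)\to\eta(x_*)$ force $\eta(x_*)\in\partial\Omega$. Part $(ii)$ is the identical argument on $I\times\overline Q$: there $\varphi$ and $\eta$ are jointly continuous (since $\varphi\in C(I;W^{2,p})\hookrightarrow C(I;C^1)$, similarly $\eta$), one has $\eta_k\to\eta$ and $\nabla\eta_k\to\nabla\eta$ uniformly on $I\times\overline Q$, and $\inf_{I\times\overline Q}\det\nabla\eta>0$ because $\|E(\eta)\|_{L^\infty(I)}\le E_0$ (so that uniform local injectivity applies uniformly in time); if $\varphi(t_k,\cdot)\notin\Admr{\eta_k(t_k)}{}$ along a subsequence, the same dichotomy and compactness of $I\times\overline Q$ produce $t_k\to t_*$ with witnessing points converging to a contradiction against $\varphi(t_*,\cdot)\in\Admr{\eta(t_*)}{}$. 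This yields $\varphi(t,\cdot)\in\Admr{\eta_k(t)}{}$ for every $t$ and all large $k$, and since $t\mapsto\varphi(t,\cdot)$ is already continuous into $W^{2,p}$ and lies in $C^1_c(J;L^2)$ by hypothesis, $\varphi_k\coloneqq\varphi$ does the job in all three cases.

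I expect the openness statement of Step~2 to be the main obstacle: it is exactly where the Lipschitz hypothesis on $\partial Q$ is indispensable (through \Cref{regtang-char}), and it must be combined with the fact that the contact set of $\eta_k$ may be strictly larger than that of $\eta$, so that cone conditions which $\Admr{\eta}{}$ does not impose must nevertheless be verified along the sequence. Everything else — the two-point self-contact condition transported by two different linear maps, and checking the uniform local-injectivity and lower Jacobian bounds along the approximating sequence — is routine, though it should be written out carefully.
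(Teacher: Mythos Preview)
Your proposal is correct and follows essentially the same contradiction-via-compactness scheme as the paper: extract limit contact points, invoke \Cref{regtang-char} and \Cref{tangent-transform} to get openness of the interior-tangent-cone bundle, and contradict membership in $\Admr{\eta}{}$.

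The one genuine difference is that you take the constant sequence $\varphi_k\coloneqq\varphi$, whereas the paper first perturbs to $\varphi_m\coloneqq\varphi+\tfrac{1}{m}\tilde t_\eta\xi_\Gamma$ using the uniformly interior field of \Cref{smooth-unif-int}, shows each $\varphi_m$ lies in $\Admr{\eta_k}{}$ for $k$ large, and then lets $m\to\infty$. Your observation is that this extra layer is unnecessary: the openness argument works equally well for $\varphi$ itself, since the hypothesis $\varphi\in\Admr{\eta}{}$ already places $\varphi(x_*)-\varphi(y_*)$ in the \emph{open} set $\interior\regtang{\eta}{x_*}-\interior\regtang{\eta}{y_*}$, and no additional ``push to the interior'' is needed. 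This buys you a cleaner proof with no diagonal extraction and immediate preservation of any auxiliary structure on $\varphi$ (in particular the $C^1_c(J;L^2)$ regularity). The paper's version, on the other hand, makes the role of the uniformly interior field explicit, which is convenient because the same perturbation device $\varphi+\delta\tilde t_\eta\xi_\Gamma$ is reused elsewhere (e.g.\ in the Minty-type argument of \Cref{AuxExistence}).
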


\begin{proof} We only present the proof of the slightly more complicated time-dependent statement $(ii)$. 

Let $\xi_{\Gamma} \colon \overline{Q} \to \RR$ be a smooth function that satisfies $\xi_{\Gamma}(x) = 0$ for $x \in \Gamma$ and $\xi_{\Gamma}(x) > 0$ for $x \in \overline{Q} \setminus \overline{\Gamma}$. Let $\varphi$ be as in $(ii)$ and $\tilde{t}_{\eta}$ be as in \Cref{smooth-unif-int} $(ii)$ (with $k_0 \ge 2$). For $m \in \NN$, we then define
\begin{equation}
\label{unif-int-test}
\varphi_m \coloneqq \varphi + \frac{1}{m}\tilde{t}_{\eta} \xi_{\Gamma}.
\end{equation}
As one can readily check (see \eqref{SID}), we have that $\varphi_m \in C(I; \Admr{\eta}{})$ for all $m$; furthermore, it is evident that $\varphi_m \to \varphi$ in $C(I; W^{2, p}(Q; \RR^n))$ as $m \to \infty$. 

Fix $m > 0$. We claim that $\varphi_m \in C(I; \Admr{\eta_k}{})$ for all $k$ sufficiently large. Indeed, if this is not the case then we can find a subsequence of $\{\eta_k\}_k$ (which we do not relabel) and contact points $\{(t_k, x_k)\}_k$ with $(t_k, x_k) \in C_{\eta_k}$ such that either there exists $\{y_k\}_k$ with $x_k \neq y_k$ and $\eta_k(t_k, x_k) = \eta_k(t_k, y_k)$ and $\varphi_m(t_k, x_k) - \varphi_m(t_k, y_k) \notin \interior \regtang {\eta_k} {t_k, x_k} - \interior \regtang {\eta_k} {t_k, y_k}$ or $\eta_k(t_k, x_k) \in \partial \Omega$ and $\varphi_m(t_k, x_k) \notin \interior \regtang {\eta_k}{t_k, x_k}$.

Since that the approach for handling the latter case is comparable, we consider only the first case. Eventually extracting a further subsequence (which again we do not relabel), we can find $t \in I$ and $x, y \in \partial Q$ such that $t_k \to t$, $x_k \to y$, $y_k \to y$, $x \neq y$, and $\eta(t, x) = \eta(t, y)$.

Recalling that $\varphi_m \in C(I; \Admr{\eta}{})$, by \eqref{SID} and \Cref{smooth-unif-int} $(ii)$, we can find two sets $K_1$ and $K_2$ such that 
\[
\varphi_m(t, x) - \varphi_m(t, y) \in K_1 \subset \subset K_2 \subset \subset \interior \regtang\eta {t, x} - \interior \regtang \eta {t, y}.
\]
In particular, \Cref{regtang-char}, the regularity of $\eta$, and \Cref{tangent-transform} imply that 
\begin{equation}	
\label{unif-tangent-inclusion}
\varphi_m(t', x') - \varphi_m(t', y') \in K_2 \subset \subset \interior \regtang\eta {t', x'} - \interior \regtang \eta {t', y'}
\end{equation}
for every $t'$ near $t$, every $x'$ near $x$ and every $y'$ near $y$. Since by assumption we have that $\nabla \eta_k \to \nabla \eta$ uniformly on $I \times \overline{Q}$, another application of \Cref{tangent-transform} yields that 
\[
K_2 \subset \interior \regtang {\eta_k}{t_k, x_k} - \interior \regtang {\eta_k} {t_k, y_k}
\]
holds for all $k$ sufficiently large, where $K_2$ is given as in \eqref{unif-tangent-inclusion}. 

In turn, this implies that $\varphi_m(t_k, x_k) - \varphi_m(t_k, y_k) \in \interior \regtang {\eta_k}{t_k, x_k} - \interior \regtang {\eta_k} {t_k, y_k}$, and we have therefore reached a contradiction.

To prove the second part of the statement, let 
\[
\varphi_m \coloneqq \varphi + \frac{1}{m}\tilde{t}_{\eta} \xi_{\Gamma}\psi_J,
\]
where $\psi_J$ is an opportunely defined smooth cut-off function to ensure that $\varphi_m \in C^1_c(J; L^2(Q; \RR^n))$. The argument above can then be repeated with only straightforward changes. This concludes the proof. 
\end{proof}

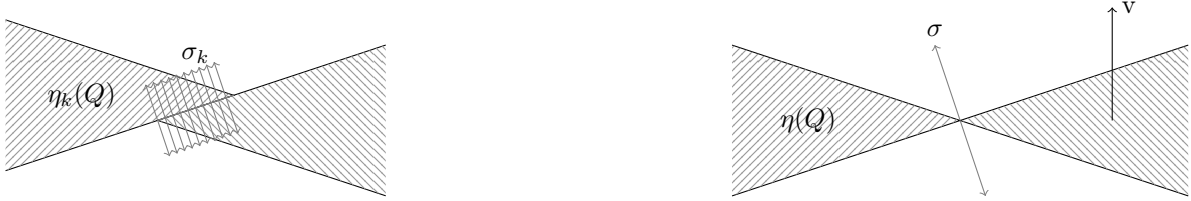
\begin{figure}[ht] 
\centering
\begin{subfigure}{.4\textwidth}
\begin{tikzpicture}
 \draw (-3,1) -- (0,0) -- (-3,-1);
 \fill[pattern=north east lines, pattern color=black!40] (-3,1) -- (0,0) -- (-3,-1);
 \draw (2,.666) -- (-1,-.333) -- (2,-1.333);
 \fill[pattern=north west lines, pattern color=black!40] (2,.666) -- (-1,-.333) -- (2,-1.333);
 \draw[postaction=decorate,decoration={markings,mark=between positions 0 and 1 step 0.1 with { \draw[->] (0,0) -- (0,.5);}},color=gray] (-1,-.333) -- (0,0);
 \draw[postaction=decorate,decoration={markings,mark=between positions 0 and 1 step 0.1 with { \draw[->] (0,0) -- (0,-.5);}},color=gray] (-1,-.333) -- (0,0);
 \node[above]  at (-.5,.3) {$\sigma_k$};
 \node at (-2,0) {$\eta_k(Q)$};
\end{tikzpicture}
\end{subfigure}
\hfill 
\begin{subfigure}{.4\textwidth}
\begin{tikzpicture}
 \draw (-3,1) -- (0,0) -- (-3,-1);
 \fill[pattern=north east lines, pattern color=black!40] (-3,1) -- (0,0) -- (-3,-1);
 \draw (3,1) -- (0,0) -- (3,-1);
 \fill[pattern=north west lines, pattern color=black!40] (3,1) -- (0,0) -- (3,-1);
 \draw[->,color=gray] (0,0) -- (-.333,1);
 \draw[->,color=gray] (0,0) -- (.333,-1);
 \draw[->] (2,0) -- (2,1.5);
%  \draw[->] (2,0) -- (2,-1.5);
 \node[right] at (2,1.5) {v};
%  \node[right] at (2,-1.5) {or $\partial_t \eta$};
 \node[above] at (-.333,1) {$\sigma$};
 \node at (-2,0) {$\eta(Q)$};
\end{tikzpicture}
\end{subfigure}
 \caption{
\label{fig:corners} Approximate configurations and construction of a contact force without a fixed sign for all potential movement directions, as described in Remark \ref{rmk:testSpaces}. Specifically, moving the right hand solid in direction $v$ is part of $\Adm{\eta}{}$ but not $\Admr{\eta}{}$.}
\end{figure}

\begin{rmk}\label{rmk:testSpaces}
\Cref{prop:TErecoveryLocal} already gives us a hint as to why we need to consider the set $\Admr{\eta}{}$ instead of $\Adm{\eta}{}$ in the weak inequality \eqref{var-in-def}. As we construct solutions through approximations, we require some notion of stability under convergence. To be precise, as the set of admissible test functions necessarily depends on the deformation itself, when studying the convergence of approximate solutions, we can only consider test functions that can be approximated by other test functions which are admissible along the sequence of converging deformations.
 
 The sets $\Adm{\eta}{}$ do not have this property. Or spoken in a more abstract way, $\{(\eta,b) : \eta \in \E, b \in \Adm{\eta}{}\}$ is not closed in any reasonable topology. To see this, consider a limit configuration consisting of two corners touching at their tips. This can arise as the limit of a sequence of configurations where the two parts of the solid touch along their sides (see Figure \ref{fig:corners}). Even restricting to rigid motions of one of the sides, it is clear that in the approximation there is a whole half-space of directions that cause overlap and are thus not admissible. In contrast, for the limit configuration, the only rigid motions in $\Adm{\eta}{}$ that are excluded are those that directly cause the tip to enter into the other corner.\footnote{Note also that the resulting set $\Adm{\eta}{}$ is non-convex. As the weak inequality is linear in its test function, this here this would allow to enlarge the set of test functions even further.}
 
 The same issue also directly translates into a consideration about contact forces: As the approximations have well-defined normals, we can have a clearly defined contact force for each of them. Assuming the right scaling, this force can persist in the limit. Indeed, this is precisely the reason why we can only assume that the final contact force has a direction in $\connormal{Q}{x}$ instead of the smaller set $\tang{Q}{x}^*$.
 
 We note here that this does not mean that the set of test functions, nor the set of possible directions for contact forces we choose are optimal. However, this proved to be good enough to obtain a satisfactory existence theory and is likely the best that can be done using local characterizations. For example, for this type of corners there are only two possible directions of contact forces, which are in fact determined by the side of the overlap the approximation is coming from. Even for isolated corners, this argument is difficult to formalize, and still much harder to generalize to an arbitrary setting of Lipschitz boundaries. In contrast, the approach we use is mostly based on a well-established theory and relies mainly on the abstract minimization structure instead of a precise characterization of contact forces or admissible testing directions. It is thus also much easier to generalize to other settings.
\end{rmk}

\begin{rmk}
 Formally, $\Adm{\eta}{}$ is the tangent cone to $\E$ in the space $W^{2,p}(Q;\RR^n)$. One can then ask what the corresponding regular tangent cone is, i.e.\@ $\widehat{T}_\eta(\E) \coloneqq \liminf_{\bar{\eta} \to \eta, \bar{\eta}\in\E} \Adm{\bar{\eta}}{}$. The space $\Admr{\eta}{}$ appears to be a reasonable candidate, as \Cref{prop:TErecoveryLocal} implies that $\Admr{\eta}{} \subset \widehat{T}_\eta(\E)$ and shows that $\Admr{\eta}{}$ behaves similarly with respect to convergence. Conveniently, this would also formally identify the set of contact forces with the corresponding convexified normal cone.
 
 We conjecture that this is indeed a characterization, i.e.\@ $\Admr{\eta}{} = \widehat{T}_\eta(\E)$. It is not hard to convince oneself that it is true in standard cases such as a smooth boundary, where both equal $\Adm{\eta}{}$, or for known geometries by a direct contradiction: If without loss of generality $\varphi \in \Adm{\eta}{}$ with $\varphi(x) \notin \regtang{\eta}{x}$ at contact and the other side is not moving, then there is an approximation $\{x_k\}_k$ of $x$ for which $\varphi(x) \notin \tang{\eta}{x_k}$. Constructing an approximation $(\eta_k)_k$ of $\eta$ with contact at $x_k$ that results in collision when moving in the direction $\varphi(x_k) \approx \varphi(x)$ is then a simple exercise in the case of e.g.\@ isolated corners.
 
 For general Lipschitz boundaries, this proof strategy runs into technical issues. Nevertheless we were not able to find a counterexample and thus leave this characterization as an open problem. In fact our technique is compatible with using $\widehat{T}_\eta(\E)$ in place of $\Admr{\eta}{}$, as we mainly rely its behavior under convergence. However we believe that the explicit characterization that $\Admr{\eta}{}$ offers to be more valuable in practice.
\end{rmk}

\section{Quasistatic evolution of viscoelastic solids with Lipschitz boundaries}\label{sec:qs}
In this section, we study the quasistatic counterpart of our problem. Besides being of independent interest, this will also serve as a building block for the main result of this paper. We mainly follow the strategy presented in \cite[Section 4]{Cesik2022}, in combination with the results of the previous sections, in order to treat the case of Lipschitz boundaries. Thus, we will only sketch the parts of the proof that are identical to those in \cite{Cesik2022} and focus more on the differences. In particular, a crucial step in our analysis involves establishing an energy inequality. For this, we follow a different approach than the one in \cite{Cesik2022}, which allows us to work without an additional regularization (see \Cref{rmk:energyEst} below). This has the advantage that, in contrast to \cite[Section 4]{Cesik2022}, we do not need an additional set of assumptions for the regularized problem.

The quasistatic problem in question is
\begin{equation}
\label{aux-Pb}
DE (\eta) + D_2 R(\eta, \partial_t \eta) = f.
\end{equation}
As a particular case, we obtain an existence theory for the parabolic equation
\begin{equation}
\label{aux-par}
\rho\frac{\partial_t \eta}{h} + DE(\eta) + D_2R(\eta, \partial_t \eta) = f + \rho\frac{\zeta}{h},
\end{equation}
where $\rho\frac{\partial_t \eta}{h}$ can be thought of as derivative of an additional dissipation and $\zeta$ is a given function. Later in \Cref{sec:inertial} we will make the choice $\zeta(t) = \partial_t \eta(t - h)$. Observe that combining these two newly introduced terms yields the difference quotient 
\[
\rho\frac{\partial_t \eta (t) -\partial_t \eta(t-h)}{h}.
\]
Weak solutions to \eqref{aux-Pb} are defined as follows.
\begin{definition}
\label{sol-def-aux}
Let $h > 0$, $\eta_0 \in \E$, and $f \in L^2((0, h); L^2(Q; \RR^n))$ be given. We say that 
\[
\eta \in W^{1,2}((0, h); W^{1,2}(Q; \RR^n)) \cap L^{\infty}((0, h); \E ) \quad \text{with} \quad E(\eta)\in L^\infty((0, h))
\]
is a solution to \eqref{aux-Pb} in $(0, h)$ if $\eta(0) = \eta_0$ and
\begin{equation}
\label{var-ineq}
\int_0^h  DE(\eta(t)) \langle \varphi(t) \rangle + D_2R(\eta(t), \partial_t \eta(t)) \langle \varphi(t)\rangle\,dt \ge \int_0^h \langle f(t), \varphi(t) \rangle_{L^2}\,dt
\end{equation}
holds for all $\varphi \in C([0,h];\Admr{\eta}{})$,\footnote{Here we use $L^{\infty}((0, h); \E)$ as a shorthand for the space of functions in $L^{\infty}((0, h); W^{2,p}(Q; \RR^n))$ that belong to $\E$ for a.e.\@ $t \in (0, h)$. Similarly, $C([0,h];\Admr{\eta}{})$ denotes the subset of $C([0, h]; W^{2, p}(Q;\RR^n))$ consisting of all functions with the property that $\varphi(t) \in \Admr{\eta(t)}{}$ for all $t \in [0, h]$.} where $\Admr{\eta}{}$ is the cone defined in \eqref{SID}.
\end{definition}

The existence of solutions to \eqref{aux-Pb} is established in the following theorem.

\begin{thm}
\label{AuxExistence}
Let $E$ satisfy \eqref{E1}--\eqref{E6}, $R$ satisfy \eqref{R1}, \eqref{R2}, \eqref{R3q} as well as \eqref{R4}, and let $h$, $\eta_0$, and $f$ be given as above. Then there exists a solution $\eta$ to \eqref{aux-Pb} in the sense of \Cref{sol-def-aux}. 
\end{thm}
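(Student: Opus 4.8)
The plan is to construct solutions by time-discretization (Rothe's method), following the overall scheme of \cite[Section 4]{Cesik2022} but using the tools developed in Sections \ref{sec:tangent-normal}--\ref{sec:adm} to handle the Lipschitz boundary, and — crucially — a new argument for the energy inequality that avoids testing with $\partial_t\eta$. First I would fix a time step $\tau = h/N$, set $\eta^0 = \eta_0$, and inductively define $\eta^j$ as a minimizer over $\E$ of the incremental functional
\[
\eta \mapsto E(\eta) + \tau R\Bigl(\eta^{j-1}, \frac{\eta - \eta^{j-1}}{\tau}\Bigr) - \tau \langle f^j, \eta\rangle_{L^2},
\]
where $f^j$ is the time-average of $f$ over $((j-1)\tau, j\tau)$. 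Existence of each minimizer follows from the direct method: sublevel sets of $E$ are bounded in $W^{2,p}$ by \eqref{E3}, the Ciarlet--Ne\v cas condition and the constraint $\eta(Q)\subset\Omega$ are closed under weak $W^{2,p}$ convergence (hence under the induced $C^1$ convergence), $\det\nabla\eta>0$ is preserved using \eqref{E2}, and weak lower semicontinuity comes from \eqref{E4}, \eqref{R1}, together with \eqref{R3q} to control the dissipation term coercively on $W^{1,2}_\Gamma$. The Euler--Lagrange relation at the minimizer, obtained by perturbing with $\varphi\in\Admr{\eta^j}{}$ (these are admissible variations by \Cref{lem:strictly-int-admissible}, since $\E$ is what constrains the competitors), yields the discrete variational inequality
\[
DE(\eta^j)\langle\varphi\rangle + D_2 R\Bigl(\eta^{j-1}, \tfrac{\eta^j-\eta^{j-1}}{\tau}\Bigr)\langle\varphi\rangle \ge \langle f^j,\varphi\rangle_{L^2}
\]
for all such $\varphi$; here one uses \eqref{E5} and \eqref{R4} for differentiability, and \eqref{eq:R1homog} to handle the scaling of the $R$-term.

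Next I would derive the a priori estimates. Testing the minimality of $\eta^j$ against the competitor $\eta^{j-1}$ gives the discrete energy estimate $E(\eta^j) + \tau R(\eta^{j-1}, \frac{\eta^j-\eta^{j-1}}{\tau}) \le E(\eta^{j-1}) + \tau\langle f^j, \frac{\eta^j-\eta^{j-1}}{\tau}\rangle_{L^2}$ (using $R\ge 0$); summing in $j$, absorbing the force term with Young's inequality and \eqref{R3q}, and invoking \eqref{E1}, \eqref{E3} produces uniform bounds on $\max_j E(\eta^j)$, on $\eta^j$ in $W^{2,p}$, and on the piecewise-affine interpolant $\eta_\tau$ in $W^{1,2}((0,h);W^{1,2}(Q;\RR^n))$. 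With \eqref{E2} this also gives a uniform lower bound $\det\nabla\eta_\tau \ge \e_0 > 0$. Passing $\tau\to 0$, I extract (for a subsequence) $\eta_\tau \rightharpoonup \eta$ in $W^{1,2}((0,h);W^{1,2})$ and, using the compact embedding $W^{2,p}\hookrightarrow C^1$ together with an Aubin--Lions / Arzelà--Ascoli argument in time, strong convergence $\eta_\tau(t)\to\eta(t)$ in $C^1(\overline Q)$ for every $t$. The limit satisfies $\eta(t)\in\E$ for a.e.\ $t$ (closedness of $\E$ under $C^1$ convergence plus the Jacobian lower bound), $\eta(0)=\eta_0$, and $E(\eta)\in L^\infty((0,h))$.

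To pass to the limit in the discrete variational inequality I would fix a test function $\varphi\in C([0,h];\Admr{\eta}{})$ and use \Cref{prop:TErecoveryLocal} $(ii)$ to produce a recovery sequence $\varphi_\tau \in C([0,h];\Admr{\eta_\tau}{})$ (suitably made piecewise constant in time) with $\varphi_\tau\to\varphi$ in $C([0,h];W^{2,p})$. The linear force term and the $D_2R$ term pass to the limit by the weak continuity in \eqref{R4} combined with the weak $W^{1,2}$ convergence of the discrete velocities. The genuinely delicate term is $\int_0^h DE(\eta_\tau(t))\langle\varphi_\tau(t)\rangle\,dt$: here $DE$ is not weakly continuous, so I would split it, using the (local) strong convergence coming from the Minty-type assumption \eqref{E6} — this is the standard device — to get $\liminf$-control and conclude that the limit inequality \eqref{var-ineq} holds. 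The main obstacle, as flagged in the introduction and in \Cref{rmk:energyEst}, is the \textbf{energy inequality}: in the present Lipschitz setting one cannot legitimately test the limiting inequality with $\varphi = \partial_t\eta$ (it need not be an admissible direction), so instead I would establish it at the discrete level — the discrete energy estimate above already has the right form — and then pass to the limit using weak lower semicontinuity of $E$ (via \eqref{E4}) and of the dissipation functional (via \eqref{R1}, \eqref{R2}), together with the strong convergence of $\eta_\tau$ needed to identify $R(\eta_\tau(t-\tau),\cdot)$ in the limit with $R(\eta(t),\cdot)$; care is needed because $R$ depends on the (strongly converging) first argument while only acting lower-semicontinuously in the second. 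This yields a solution in the sense of \Cref{sol-def-aux} and, as a byproduct, the energy inequality that will be needed in \Cref{sec:inertial}.
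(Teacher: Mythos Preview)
Your overall strategy matches the paper's: minimizing-movement discretization, direct method for the incremental problem, discrete Euler--Lagrange inequality for $\varphi\in\Admr{\eta^j}{}$, a priori bounds via $\J_k(\eta_k)\le\J_k(\eta_{k-1})$, compactness, and recovery of test functions through \Cref{prop:TErecoveryLocal}. The one genuine gap is in your treatment of the $DE$ term. You write that you would ``split it, using the (local) strong convergence coming from the Minty-type assumption \eqref{E6}'', but you do not say how you verify the \emph{hypothesis} of \eqref{E6}, i.e.\ how you get
\[
\limsup_{\tau\to 0}\int_0^h \bigl[DE(\overline\eta_\tau)-DE(\eta)\bigr]\langle(\overline\eta_\tau-\eta)\psi\rangle\,dt \le 0.
\]
In an unconstrained problem this follows by plugging $\pm(\overline\eta_\tau-\eta)\psi$ into the approximate equation, but here the discrete Euler--Lagrange relation is only an \emph{inequality}, valid only for $\varphi\in\Admr{\overline\eta_\tau}{}$, and $-(\overline\eta_\tau-\eta)\psi$ is in general not such a direction. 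The paper's fix is to perturb by a small multiple of a uniformly interior vector field (\Cref{smooth-unif-int}): one sets $\varphi_\tau \coloneqq (\overline\eta_\tau-\eta-\delta_\tau\,\tilde t_{\overline\eta_\tau})\psi$ and observes that $-\varphi_\tau\in\Admr{\overline\eta_\tau}{}$ for a suitable $\delta_\tau\to 0$, because $\overline\eta_\tau-\eta\to 0$ in $C^1$ while $\tilde t_{\overline\eta_\tau}$ points strictly into $\interior\regtang{\overline\eta_\tau}{x}$ with a uniform angle. Testing \eqref{approx-EL} with $-\varphi_\tau$ then gives the desired $\limsup$ bound, after which \eqref{E6} yields the local strong $W^{2,p}$ convergence and \eqref{E5} closes the argument. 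This perturbation-by-$\tilde t$ step is the heart of the Lipschitz-boundary adaptation and should be made explicit.

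A minor side remark: the discrete energy estimate you write down carries only one copy of $R$, not two, so it does not in the limit give the energy inequality in the form \eqref{EE-QS} needed later. The paper obtains the sharp factor $2$ separately (\Cref{thm:td-energy-ineq}) via a Moreau--Yosida interpolation in the step size $\theta\in(0,\tau]$; this is not part of the statement of \Cref{AuxExistence} itself, but your ``byproduct'' claim overshoots what the discrete inequality alone delivers.
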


\begin{proof}
We adopt the strategy employed in the initial two steps of the proof of \cite[Theorem 4.3]{Cesik2022}. To facilitate the comparison and highlight the differences, we retain the same structure.
\newline
\emph{Step 1:} Given $M \in \NN$, we set $\tau \coloneqq h/M$ and decompose $[0, h]$ into subintervals $[k \tau, (k + 1)\tau]$ of length $\tau$. Moreover, for every $1\leq k\leq M$ we let 
\begin{equation}
\label{time-avg-k}
f_k \coloneqq \frac{1}{\tau}\int_{(k - 1)\tau}^{k \tau}f(t)\,dt \in L^2(Q; \RR^n).
\end{equation}
We then define $\eta_k \in \E$ recursively via
\begin{equation}
\label{etak-def}
\eta_k \in \arg\min \left\{\J_k(\eta) : \eta \in \E  \right\},
\end{equation}
where
\begin{equation}
\label{Jk-def}
\J_k(\eta) \coloneqq E(\eta) + \tau R\left(\eta_{k - 1}, \tfrac{\eta - \eta_{k - 1}}{\tau}\right) - \tau \left \langle f_k, \tfrac{\eta - \eta_{k-1}}{\tau} \right\rangle_{L^2}.
\end{equation}
The existence of $\eta_k$ as in \eqref{etak-def} follows by a standard application of the direct method in the calculus of variations. Observe that since $\J_k(\eta_{k}) \leq \J_k(\eta_{k-1})$, expanding these inequalities and summing over $k = 1, \dots, m$, $m \le M$, yields a uniform estimate in the form of
\begin{equation} \label{eq:quasiAprioriEst}
 E(\eta_m) + \sum_{k=1}^{m} \tau R\left(\eta_{k-1},\tfrac{\eta_k-\eta_{k-1}}{\tau}\right) \leq E(\eta_0) + \tau \sum_{k=1}^m {\inner{f_k}{\tfrac{\eta_k-\eta_{k-1}}{\tau}}}_{L^2}.
\end{equation}
Next, for $t \in [(k - 1) \tau, k \tau)$, we let
\begin{equation}
\label{time-dep-interp}
\underline{\eta}_{\tau}(t) \coloneqq \eta_{k - 1}, \qquad \overline{\eta}_{\tau}(t) \coloneqq \eta_k, \qquad \text{ and }  \qquad \eta_{\tau}(t) \coloneqq \frac{k \tau - t}{\tau} \eta_{k - 1} + \frac{t - (k - 1) \tau}{\tau}\eta_k.
\end{equation}
Reasoning as in \cite[Theorem 4.3]{Cesik2022}, \eqref{eq:quasiAprioriEst} and the assumptions on $E$ and $R$ yield the existence of a subsequence (which we do not relabel) and a limiting deformation $\eta \in W^{1,2}((0, h); W^{1,2}(Q; \RR^n)) \cap L^\infty((0,h);\E)$ such that
\[
\arraycolsep=1.4pt\def\arraystretch{1.6}
\begin{array}{rll}
\underline{\eta}_{\tau} & \overset{\ast}{\rightharpoonup} {\eta} & \text{ in } L^{\infty}((0, h); W^{2,p}(Q;\RR^n)), \\
\overline{\eta}_{\tau} & \overset{\ast}{\rightharpoonup} {\eta} & \text{ in } L^{\infty}((0, h); W^{2,p}(Q;\RR^n)),
\end{array}
\]
and
\begin{equation}
\label{weakH1Hk}
\eta_{\tau} \rightharpoonup \eta \ \text{ in } W^{1,2}((0, h); W^{1,2}(Q; \RR^n)).
\end{equation}
\emph{Step 2:} A standard argument in the calculus of variations guarantees that each function $\eta_k$ (see \eqref{etak-def}) satisfies the Euler-Lagrange inequality
\begin{equation}
\label{EL-ineq-k}
0 \leq D\J_k(\eta_k)\langle \varphi \rangle
\end{equation}
for all $\varphi \in \Admr{\eta_k}{}$. Summing up these inequalities and replacing all terms with their time-dependent counterparts (see \eqref{time-dep-interp}), we obtain that
\begin{equation}
\label{approx-EL}
 0 \leq \int_0^h DE(\overline{\eta}_\tau(t)) \langle \varphi(t) \rangle  + D_2R(\underline{\eta}_\tau(t), \partial_t \eta_\tau(t)) \langle \varphi(t) \rangle - \inner{f_\tau(t)}{\varphi(t)} dt
\end{equation}
for all $\varphi \in L^{\infty}((0, h); W^{2, p}(Q; \RR^n))$ with $\varphi(t) \in \Admr{\overline{\eta}_\tau(t)}{}$ for $\mathcal{L}^1$-a.e.\@ $t \in (0,h)$. 
Therefore, in order to conclude the proof, we are left to show that all terms in \eqref{approx-EL} converge to their formal limit. For the last two, this is essentially a direct consequence of the weak convergence, together with \eqref{R4} and \Cref{prop:TErecoveryLocal} to ensure a strong approximation of test functions. We note here that in order to apply \Cref{prop:TErecoveryLocal}, we need to show that $\nabla \overline{\eta}_{\tau} \to \nabla \eta$ uniformly in $I \times Q$. This, in turn, can be obtained by observing that, in view of the definition, $\bar{\eta}_\tau(t) = \bar{\eta}_\tau(\lceil \tfrac{t}{\tau}\rceil \tau) = \eta_\tau(\lceil \tfrac{t}{\tau}\rceil \tau)$, and therefore 
\begin{align*}
 \norm[C^{1,\alpha}]{\bar{\eta}_\tau(t)-\eta(t)} \leq  \norm[C^{1,\alpha}]{\eta_\tau(\lceil \tfrac{t}{\tau}\rceil \tau)-\eta_\tau(t)}+ \norm[C^{1,\alpha}]{\eta_\tau(t)-\eta(t)}.
\end{align*}
Now, we have that the former term converges to $0$ by uniform continuity of the $\eta_\tau$, while the latter term converges to $0$ uniformly by the convergence in $C([0, h]; C^{1, \alpha}(Q; \RR^n))$ (which, up to the extraction of a further subsequence, follows by an application of the Aubin-Lions lemma).

For the potential energy however, due to the lack of regularization, we have to deviate from \cite{Cesik2022} and use a Minty-type argument that in \cite{Cesik2022} was only needed at a later stage. To this end, let $\psi \in C^{\infty}([0, h]; C^{\infty}_{\Gamma}(Q; [0, 1]))$ be given. Then, by \eqref{E6} and Lebesgue's dominated convergence theorem we see that 
\begin{align}
0 & \le \limsup_{\tau \to 0} \int_0^h [DE(\overline{\eta}_\tau(t)) - DE(\eta(t))]\langle (\overline{\eta}_\tau(t) - \eta(t)) \psi(t) \rangle\,dt \notag \\
& = \limsup_{\tau \to 0} \int_0^h DE(\overline{\eta}_\tau(t)) \langle (\overline{\eta}_\tau(t) - \eta(t)) \psi(t) \rangle\,dt. \label{minty-qs} & \notag
\\
& = \limsup_{\tau \to 0} \int_0^h DE(\overline{\eta}_\tau(t)) \langle \varphi_{\tau}(t) + \delta_{\tau} \tilde{t}_{\overline{\eta}_{\tau}}(t)\psi(t) \rangle\,dt,
\end{align}
where 
\[
\varphi_{\tau} \coloneqq (\overline{\eta}_\tau - \eta - \delta_{\tau}\tilde{t}_{\overline{\eta}_{\tau}}) \psi.
\]
We recall here that $\tilde{t}_{\overline{\eta}_{\tau}}$ is the uniformly interior vector field given by \Cref{smooth-unif-int}. Notice that $\delta_{\tau} \in \RR$ can be chosen in such a way that $-\varphi_{\tau} \in C([0,h];\Admr{\eta}{})$ and $\delta_{\tau} \to 0$ as $\tau \to 0$. Observe that the (approximate) Euler--Lagrange inequality \eqref{approx-EL} for $-\varphi_{\tau}$ can be rewritten as
\begin{equation}
\label{EL-varphitau}
\int_0^h DE(\overline{\eta}_\tau(t)) \langle \varphi_{\tau}(t) \rangle\,dt \le \int_0^h -D_2R(\underline{\eta}_\tau(t), \partial_t \eta_\tau(t)) \langle \varphi_{\tau}(t) \rangle + \inner{f_\tau(t)}{\varphi_{\tau}(t)} dt.
\end{equation}

% \textcolor{cyan}{OLD VERSION: For the potential energy however, due to the lack of regularization, we have to deviate from \cite{Cesik2022} and use a Minty-type argument that in \cite{Cesik2022} was only needed at a later stage. To this end, let $\psi \in C([0, h]; C^{\infty}_c(Q; \RR^n))$ be given. Then, by \eqref{E6} and Lebesgue's dominated convergence theorem we see that 
% \begin{align}
% 0 & \le \limsup_{\tau \to 0} \int_0^h [DE(\overline{\eta}_\tau(t)) - DE(\eta(t))]\langle (\overline{\eta}_\tau(t) - \eta(t)) \psi(t) \rangle\,dt \notag \\
% & = \limsup_{\tau \to 0} \int_0^h DE(\overline{\eta}_\tau(t)) \langle (\overline{\eta}_\tau(t) - \eta(t)) \psi(t) \rangle\,dt. \notag
% \end{align}
% Let
% \[
% \varphi_{\tau} \coloneqq (\overline{\eta}_\tau - \eta) \psi
% \]
% and observe that $\varphi_{\tau}, - \varphi_{\tau} \in L^{\infty}((0, h); \Adm{\overline{\eta}_{\tau}}{})$. In particular, this implies that both the (approximate) Euler--Lagrange inequality \eqref{approx-EL} and its reverse hold for $\varphi_{\tau}$. Thus, we have
% \[
% \int_0^h DE(\overline{\eta}_{\tau}(t))\langle \varphi_{\tau}(t)\rangle\,dt = - \int_0^h D_2R(\underline{\eta}_{\tau}(t), \partial_t \eta_{\tau}(t))\langle \varphi_{\tau}(t) \rangle\,dt + \int_0^h \langle f_{\tau}(t), \varphi_{\tau}(t) \rangle \,dt.
% \]
% }
Eventually extracting a subsequence (which we do not relabel), we have that 
\begin{equation}
\label{phitau-conv}
\varphi_{\tau}(t) \to 0 \qquad \text{ in } W^{1,2}(Q; \RR^n)
\end{equation} 
for $\mathcal{L}^1$-a.e.\@ $t$. Moreover, since $f_{\tau} \to f$ in $L^2((0, h); L^2(Q; \RR^n))$, an application of Lebesgue's dominated convergence theorem yields that
\begin{equation}
\label{fphito0}
\int_0^h \langle f_{\tau}(t), \varphi_{\tau}(t) \rangle \,dt \to 0
\end{equation}
as $\tau \to 0$. Since $\{D_2R(\underline{\eta}_{\tau}(t), \partial_t \eta_{\tau}(t))\}_{\tau}$ is bounded in $(W^{1,2})^*$ uniformly in $t$ (see \eqref{R4}), by \eqref{phitau-conv} we have that
\begin{equation}
\label{D2Rphito0}
\int_0^h D_2R(\underline{\eta}_{\tau}(t), \partial_t \eta_{\tau}(t))\langle \varphi_{\tau}(t) \rangle\,dt \to 0
\end{equation}
as $\tau \to 0$. Combining \eqref{minty-qs} with \eqref{EL-varphitau}, \eqref{fphito0}, and \eqref{D2Rphito0} we conclude that
\begin{align*}
\limsup_{\tau \to 0} \int_0^h [DE(\overline{\eta}_\tau(t)) - DE(\eta(t))] & \langle (\overline{\eta}_\tau(t) - \eta(t)) \psi(t) \rangle\,dt \\
& \le  \limsup_{\tau \to 0} \left|\int_0^h DE(\overline{\eta}_\tau(t)) \langle \delta_{\tau} \tilde{t}_{\overline{\eta}_{\tau}}(t)\psi(t) \rangle\,dt\right|  \\
& \le  \limsup_{\tau \to 0} \int_0^h \delta_{\tau} \|DE(\overline{\eta}_\tau(t))\|_{(W^{2, p})^*}\|\tilde{t}_{\overline{\eta}_{\tau}}(t)\psi(t)\|_{W^{2, p}}\,dt = 0.
\end{align*}
Since $\psi$ is arbitrary, we must have that 
\[
\limsup_{\tau \to 0} [DE(\overline{\eta}_\tau(t)) - DE(\eta(t))]\langle (\overline{\eta}_\tau(t) - \eta(t)) \psi(t) \rangle \le 0
\]
for a.e.\@ $t$. Consequently, \eqref{E6} implies that $\overline{\eta}_\tau(t) \to \eta(t)$ in $W^{2, p}(K; \RR^n)$ for all $K$ compactly contained in $\overline{Q}$ with $\dist(K, \Gamma) > 0$ and for $\mathcal{L}^1$-a.e.\@ $t$. Thus, we have shown that, by \eqref{E5}, $DE(\overline{\eta}_{\tau}(t)) \to DE(\eta(t))$ in $(W^{2,p}(Q; \RR^n))^*$. Since $\{DE(\overline{\eta}_{\tau}(t))\}_{\tau}$ is bounded in $(W^{2,p}(Q; \RR^n))^*$ uniformly in $t$, we claim that this is enough to prove the existence of a solution. Indeed, let $\varphi$ be as in \Cref{sol-def-aux}, then using \Cref{prop:TErecoveryLocal} we can find $\varphi_{\tau} \to \varphi$ that is admissible for the approximate Euler--Lagrange variational inequality (see \eqref{approx-EL}) and we can then let $\tau \to 0$.
\end{proof}

%\begin{rmk}
%\ant{Mention here why we use the Moreau-Yosida method for energy inequality (testing with $-\partial_t\eta$ is problematic...)}
%
%\ant{This we can discard:} The minimizing movements proof for the existence of a solution to the variational inequality is unchanged. For later purposes, we require an energy estimate. This, in turn, is obtained by testing with $-\chi_{[0, t]} \partial_t \eta$. In order to prove that this is an admissible test function, we first consider the difference quotients
%\[
%\partial^{\e}_t \eta \coloneqq \frac{\eta(t - \e) - \eta(t)}{\e}.
%\]
%For any $x, y \in C_{\eta}$, we first need to show that $\partial^{\e}_t\eta(x) - \partial^{\e}_t\eta(y) \notin \interior T^{\eta}_y - \interior T^{\eta}_x$. This can be done by contradiction. Indeed, if this is not the case then for $\e$ small enough we must have that $\eta(t - \e) = \eta(t) + \e \partial^{\e}_t \eta \notin \E$. But this is not possible because by construction $\eta \in L^{\infty}((0, h); \E \cap W^{k_0, 2})$. Then, using Antonin's smooth uniformly interior conormal field, we set $\varphi_{\e, \delta} = \partial^{\e}_t \eta + \delta \tilde{n}$. Using the characterization, this should give that $\varphi_{\e, \delta} + \delta \tilde{n} \in T^0_{\eta}(\E)$. Finally, we conclude by testing the variational inequality with $\varphi_{\e, \delta}$ and by letting first $\e \to 0$, and then $\delta \to 0$.  
%\end{rmk}

\subsection{Energy inequality}
We will derive the energy inequality using the previous construction and the so called Moreau-Yosida approximation. This approach, which is commonly employed in minimizing movements and can be traced back to De Giorgi, effectively circumvents any regularity issues associated with test functions by relying solely on the metric properties of energy and dissipation. Given that we will primarily utilize it with the time-delayed approximation, we will prove the energy inequality with the respective terms from the inertia. Importantly, this proof is applicable even when $\rho = 0$, that is, in the quasistatic case. Our approach draws inspiration from \cite[Sec. 3.1]{ambrosio2005}, although some modifications are necessary to accommodate forces and delve deeper into the discussion of inertial terms. On the other hand, the specific nature of our problem allows for certain simplifications, facilitating the analysis.

\begin{thm}[Energy inequality]\label{thm:td-energy-ineq}
The solutions constructed for equation \eqref{aux-Pb} satisfy the energy inequality
\begin{equation}
\label{EE-QS}
 E(\eta(s)) +  \int_0^s 2 R(\eta(t),\partial_t \eta(t)) \,dt  \leq E(\eta_0) + \int_0^s \inner[L^2]{f(t)}{\partial_t \eta(t)}\,dt
\end{equation}
for all $s \in [0, h]$. Similarly, for the time-delayed equation \eqref{aux-par}, we have that
\begin{multline}
\label{EE-TD}
 E(\eta(s)) + \int_0^s 2 R(\eta(t),\partial_t \eta(t)) \,dt + \frac{\rho}{2 h}\int_0^s \norm[L^2]{\partial_t \eta(t)}^2 \,dt \\ \leq E(\eta_0) + \frac{\rho}{2h}\int_0^s \norm[L^2]{\zeta(t)}^2 dt + \int_0^s \inner[L^2]{f(t)}{\partial_t \eta(t)}\,dt
\end{multline}
holds for all $s \in [0,h]$.
\end{thm}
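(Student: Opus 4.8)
The plan is to prove the energy inequality directly at the level of the discrete time-incremental minimization scheme constructed in the proof of \Cref{AuxExistence}, and then pass to the limit. The discrete scheme already provides the estimate \eqref{eq:quasiAprioriEst}, but this is not yet sharp enough: the right-hand side contains $\tau\sum \langle f_k, (\eta_k-\eta_{k-1})/\tau\rangle$ rather than the integral $\int_0^s \langle f, \partial_t\eta\rangle$, and more importantly the dissipation term appears evaluated at the ``wrong'' point, namely $R(\eta_{k-1}, (\eta_k-\eta_{k-1})/\tau)$, whereas \eqref{EE-QS} wants $2R(\eta(t),\partial_t\eta(t))$ integrated. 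The classical De Giorgi / minimizing-movements trick (following \cite[Sec. 3.1]{ambrosio2005}) is to exploit the minimality of $\eta_k$ not just against $\eta_{k-1}$ but against a whole family of competitors, in order to recover the full slope/metric-derivative information and ultimately the factor $2$ (from the quadratic homogeneity \eqref{R2}: $2R(\eta,b)$ is the Legendre sum $R(\eta,b) + R^*(\eta, D_2R(\eta,b))$-type quantity). Concretely, I would first set up the Moreau–Yosida competitor: for the functional $\J_k$, test minimality against $\eta_k + \e\varphi$ for admissible $\varphi$, which recovers \eqref{EL-ineq-k}, and then combine this Euler–Lagrange inequality with the energy-decrease inequality $\J_k(\eta_k)\le\J_k(\eta_{k-1})$.

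The key steps, in order, would be: (1) From $\J_k(\eta_k)\le\J_k(\eta_{k-1})$ extract, using the convexity and $2$-homogeneity \eqref{R2} of $R$ in its second slot, the discrete inequality
\[
E(\eta_k) + \tau\, 2R\!\left(\eta_{k-1}, \tfrac{\eta_k-\eta_{k-1}}{\tau}\right) \le E(\eta_{k-1}) + \tau\langle f_k, \tfrac{\eta_k-\eta_{k-1}}{\tau}\rangle_{L^2} + \big(\text{error from }R\text{ evaluated at }\eta_{k-1}\text{ vs }\eta_k\big),
\]
with the factor $2$ coming from pairing the ``$\le\J_k(\eta_{k-1})$'' inequality with the Euler–Lagrange inequality tested along $\varphi = \eta_k-\eta_{k-1}$ (scaled); here one uses $R(\eta,b) + R(\eta,b) = R(\eta,b) + D_2R(\eta,b)\langle b\rangle - R(\eta,b)$ and the identity $D_2R(\eta,b)\langle b\rangle = 2R(\eta,b)$ which follows from \eqref{R2} by differentiating $R(\eta,\lambda b)=\lambda^2 R(\eta,b)$ at $\lambda=1$. (2) For the time-delayed equation \eqref{aux-par}, redo the same with the augmented functional that includes the term $\frac{\rho}{2h}\|\eta - \eta_{k-1}\|_{L^2}^2 - \frac{\rho}{h}\langle\zeta_k, \eta-\eta_{k-1}\rangle_{L^2}$ (or the appropriate discrete analogue), producing the extra terms $\frac{\rho}{2h}\|\partial_t\eta\|^2$ on the left and $\frac{\rho}{2h}\|\zeta\|^2$ on the right, again via completing the square. (3) Sum over $k=1,\dots,m$ with $m\tau \approx s$, telescoping the $E$ terms, to get a discrete version of \eqref{EE-QS}/\eqref{EE-TD} with $\underline\eta_\tau, \overline\eta_\tau, \eta_\tau$ in place of $\eta$. (4) Pass to the limit $\tau\to 0$: for the left-hand side use weak lower semicontinuity — $E$ via \eqref{E4}, the dissipation via \eqref{R1} together with the strong/weak convergences established in \Cref{AuxExistence} (in particular $\partial_t\eta_\tau \rightharpoonup \partial_t\eta$ in $L^2(L^2)$ and, crucially, the already-proven strong convergence $\overline\eta_\tau(t)\to\eta(t)$ in $W^{2,p}_{\mathrm{loc}}$, plus $\underline\eta_\tau, \overline\eta_\tau \to \eta$ in $C^{1}$ so that the $\eta$-slot of $R$ converges); for the right-hand side use $f_\tau\to f$ strongly in $L^2(L^2)$ and the weak convergence of $\partial_t\eta_\tau$. (5) Handle the issue that $E(\eta(s))$ should be evaluated at \emph{every} $s\in[0,h]$, not just a.e.\ $s$: since $\eta\in C_w([0,h];W^{2,p})$ the map $s\mapsto E(\eta(s))$ is at worst lower semicontinuous, and the left-hand side of the inequality is monotone in $s$ up to the (continuous) force and dissipation integrals, so one upgrades from a.e.\ $s$ to all $s$ by a standard argument.

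The main obstacle I anticipate is Step (4), specifically the lower-semicontinuity of the dissipation integral $\int_0^s 2R(\eta_\tau(\cdot),\partial_t\eta_\tau(\cdot))\,dt$ under the available convergences — the difficulty being that $R$ depends on \emph{two} arguments, the deformation (which converges only strongly in $W^{2,p}$ on compact subsets away from $\Gamma$, and strongly in $C^1$ globally) and the velocity (which converges only weakly in $L^2(W^{1,2})$). One must argue that, for fixed strong limit $\eta$, the functional $b\mapsto \int R(\eta,b)$ is weakly l.s.c.\ on $L^2(W^{1,2})$ (which follows from \eqref{R1} and convexity \eqref{R2} via a Ioffe-type theorem or by the standard trick of writing $R(\eta,\cdot)$ as a supremum of affine functionals using $D_2R$), and then that replacing the converging $\eta_\tau$ (resp.\ $\underline\eta_\tau$) by the limit $\eta$ inside $R$ produces a vanishing error — this is where the uniform $C^1$ convergence and the boundedness \eqref{eq:Rgrowthbounds}, \eqref{eq:D2Rgrowthbounds} of $R$ and $D_2R$ on sublevel sets of $E$ come in. A secondary technical point is the discrete Euler–Lagrange step: \eqref{EL-ineq-k} only holds for $\varphi\in\Admr{\eta_k}{}$, and $\pm(\eta_k-\eta_{k-1})$ need not lie in this cone; one circumvents this exactly as in the Minty argument of \Cref{AuxExistence}, by perturbing with a small multiple $\delta\tilde t_{\eta_k}$ of the uniformly interior vector field from \Cref{smooth-unif-int} and letting $\delta\to0$, absorbing the resulting error using the boundedness of $DE$ and $D_2R$ on sublevel sets. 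The rest of the argument is bookkeeping with the quadratic structure.
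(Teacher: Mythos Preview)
Your proposal has a genuine gap: the mechanism you describe for obtaining the factor $2$ in front of the dissipation does not work. You correctly cite the De Giorgi variational-interpolation idea from \cite[Sec.~3.1]{ambrosio2005}, but what you then outline is not that technique. Testing minimality of $\eta_k$ against $\eta_k+\varepsilon\varphi$ simply recovers the Euler--Lagrange inequality \eqref{EL-ineq-k}; combining this with $\J_k(\eta_k)\le\J_k(\eta_{k-1})$ and the Euler identity $D_2R(\eta,b)\langle b\rangle=2R(\eta,b)$ does \emph{not} yield the discrete inequality you write. Concretely, the EL inequality with $\varphi=\eta_{k-1}-\eta_k$ gives $2\tau R(\eta_{k-1},b)\le -DE(\eta_k)\langle\eta_k-\eta_{k-1}\rangle+\tau\langle f_k,b\rangle$, and turning $-DE(\eta_k)\langle\eta_k-\eta_{k-1}\rangle$ into $E(\eta_{k-1})-E(\eta_k)$ would require convexity of $E$, which is not assumed. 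Moreover, the admissibility fix you propose---perturbing by $\delta\tilde t_{\eta_k}$---cannot rescue the argument: it works in the Minty step of \Cref{AuxExistence} only because there the base test function $\overline\eta_\tau-\eta$ is itself vanishing, so a vanishing $\delta_\tau$ can dominate it. Here $\eta_{k-1}-\eta_k$ has fixed size for fixed $\tau$, and in the corner scenario of \Cref{rmk:testSpaces} it can point in a direction lying outside $\interior\regtang{\eta_k}{x}-\interior\regtang{\eta_k}{y}$; no small $\delta$ pushes it inside. This obstruction is precisely what \Cref{rmk:energyEst} records.

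The paper's proof implements the actual De Giorgi interpolation: for each step $k$ one introduces a one-parameter family of minimizers $\eta_\theta\in\arg\min E_\theta(\cdot;\eta_{k-1})$ for $\theta\in(0,\tau]$, where $E_\theta$ is the Moreau--Yosida functional with time-scale $\theta$, and computes $\frac{d}{d\theta}E_\theta(\eta_\theta;\eta_{k-1})$ by comparing minimizers at nearby $\theta$. This derivative produces an \emph{extra} dissipation term involving the De Giorgi interpolant $\beta_\tau(t)\coloneqq(\eta_\theta-\eta_{k-1})/\theta$, and the whole argument uses only minimality comparisons---never the Euler--Lagrange inequality, and never an admissibility condition on a velocity-type test function. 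After passing to the limit $\tau\to0$ one must still identify the weak limit $\beta$ of $\beta_\tau$ with $\partial_t\eta$; this is done by a separate monotonicity argument (subtracting the limiting equation for $\partial_t\eta$ from that for $\beta$ and using that $D_2R(\eta,\cdot)$ is monotone). The factor $2$ thus arises from having \emph{two} dissipation integrals on the left---one with $\partial_t\eta_\tau$, one with $\beta_\tau$---both converging to $R(\eta,\partial_t\eta)$ by lower semicontinuity, not from any algebraic combination of EL and energy-decrease.
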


\begin{proof}
Since the estimate in \eqref{EE-QS} is a special case of \eqref{EE-TD} with $\rho =0$, it suffices to prove the validity of the latter inequality.

Fix $\tau$ and $k \in \{1, \dots, M\}$. We define the Moreau-Yosida approximation by considering the family of functionals
\begin{align*}
 E_\theta(\eta;\eta_{k - 1}) &\coloneqq E(\eta) + \theta R\left( \eta_{k - 1},  \tfrac{\eta- \eta_{k - 1}}{\theta}\right)
 + \frac{\theta \rho}{2h} \norm[L^2]{\tfrac{\eta-\eta_{k - 1}}{\theta}-\zeta_k}^2 - \theta\inner[L^2]{f_k}{\tfrac{\eta-\eta_{k - 1}}{\theta}}
\end{align*}
for $\theta \in (0,\tau]$ as well as their minimizers:
\begin{align*}
\eta_\theta & \in \operatorname*{arg\,min}_{\eta \in \mathcal{E}} E_\theta(\eta;\eta_{k - 1}).
\end{align*}
Here $\zeta_k$ is used to denote the time average of $\zeta$ over the interval $[(k - 1)\tau, k \tau)$ (see \eqref{time-avg-k}). Reasoning as in the proof of \Cref{AuxExistence}, the existence of $\eta_\theta$ follows by an application of the direct method in the calculus of variations.

What we now want to do, is to vary $\theta$ between $0$ and $\tau$. In particular, we want to consider the (at the moment formal) equation
\begin{align}\label{eq:fundamentalTheorem}
 E_\tau(\eta_\tau; \eta_{k - 1}) - \lim_{\theta \searrow 0} E_\theta(\eta_\theta;\eta_{k - 1}) = \int_0^\tau \frac{d}{d\theta} E_\theta(\eta_\theta;\eta_{k - 1}) \, d\theta.
\end{align}
The reason for this is that, for $\theta = \tau$, without loss of generality we can identify $\eta_\tau$ with $\eta_{k}$ and get the energy and part of the other terms we need, while on the other hand $E_\theta(\eta_\theta; \eta_{k - 1})$ can be estimated by $E(\eta_{k - 1})$ for $\theta \searrow 0$. Finally, the integral on the right hand side will give the missing terms.

Let us begin with the limit $\theta \searrow 0$. Comparing the minimizer $\eta_\theta$ with the admissible competitor $\eta_{k - 1}$, we obtain that $E_\theta(\eta_{k - 1}; \eta_{k - 1}) \geq E_\theta(\eta_\theta;\eta_{k - 1})$ and thus
\begin{align*}
 E(\eta_{k - 1}) + \frac{\theta \rho}{2h} \norm[L^2]{\zeta_k}^2 
 \geq E_\theta(\eta_\theta; \eta_{k - 1}).
\end{align*}
This in turn immediately implies that 
\begin{equation}
\label{limsuptheta}
\limsup_{\theta \searrow 0} E_\theta(\eta_\theta;\eta_{k - 1}) \leq E(\eta_{k - 1}).
\end{equation}
Expanding the right hand side further, we also have
\[
E(\eta_{k - 1}) + \frac{\theta \rho}{2h} \norm[L^2]{\zeta_k}^2 \geq E(\eta_\theta) + \theta R\left( \eta_{k - 1},  \tfrac{\eta_{\theta} - \eta_{k - 1}}{\theta}\right)
 + \frac{\theta \rho}{2h} \norm[L^2]{\tfrac{\eta_\theta-\eta_{k - 1}}{\theta}-\zeta_k}^2 - \theta\inner[L^2]{f_k}{\tfrac{\eta_\theta-\eta_{k - 1}}{\theta}}.
\]
Multiplying the previous inequality by $\theta$, separating the dissipation-like terms of lower order, using Korn's inequality (see \eqref{R3}) and the $2$-homogenity of $R$, we get that
\begin{align*}
K_R \norm[W^{1,2}]{\eta_\theta - \eta_{k - 1}}^2 & \leq R( \eta_k,  \eta_\theta- \eta_{k - 1}) + \frac{\rho}{2h} \norm[L^2]{\eta_\theta-\eta_{k - 1}}^2 \\
  &\leq \theta\left(E(\eta_{k - 1})-E(\eta_\theta)\right) +  \frac{\theta \rho}{h} \inner[L^2]{\eta_\theta-\eta_{k - 1}}{ \zeta_k } + \theta\inner[L^2]{f_k}{\eta_\theta-\eta_{k - 1}} - \frac{\theta^2\rho}{h} \norm[L^2]{\zeta_k}^2.
\end{align*}
 Now, the scalar products can be estimated using Young's inequality and, for $\theta < \tau$ small enough, their parts involving $\eta_\theta$ can be absorbed on the left hand side. Furthermore, recall that $E$ is bounded from below, that $E(\eta_k)$ is uniformly bounded (see \eqref{eq:quasiAprioriEst}), and notice that we can drop the last term because of its sign. This then yields the uniform estimate
\begin{align*} 
 c \norm[W^{1,2}(Q)]{\eta_\theta - \eta_{k - 1}}^2  \leq \theta\left(E(\eta_{k - 1})-E_{\min} + \frac{\rho}{2h}\norm[L^2]{f_k}^2+ \frac{\rho}{2h}\norm[L^2]{\zeta_k}^2\right).
\end{align*}
In particular, from this we infer that $\eta_\theta \to \eta_{k - 1}$ for $\theta \searrow 0$.

Next, we compute the derivative $\frac{d}{d\theta} E_\theta(\eta_\theta;\eta_{k - 1})$. For this, let $\theta_1,\theta_2 \in (0,\tau]$ be given. Then a direct calculation yields that
\begin{multline*}
E_{\theta_1}(\eta_{\theta_1};\eta_{k - 1}) - E_{\theta_2}(\eta_{\theta_2};\eta_{k - 1}) \leq  E_{\theta_1}(\eta_{\theta_2};\eta_{k - 1}) - E_{\theta_2}(\eta_{\theta_2};\eta_{k - 1}) \\
 % &= \left(\frac{1}{\theta_1} - \frac{1}{\theta_2}\right) \left(R(\eta_k,\eta_{\theta_2}-\eta_k) + \frac{\rho}{2h} \norm[L^2]{\eta_{\theta_2}-\eta_k}^2 \right) + (\theta_1-\theta_2) \frac{\rho}{2h}\norm[L^2]{\zeta_k }^2  \\
= \left(\theta_1-\theta_2\right) \left[ -\frac{1}{\theta_1\theta_2} \left(R(\eta_{k - 1},\eta_{\theta_2}-\eta_{k - 1}) + \frac{\rho}{2h} \norm[L^2]{\eta_{\theta_2}-\eta_{k - 1}}^2  \right)  +  \frac{\rho}{2h}\norm[L^2]{\zeta_k}^2 \right].
\end{multline*}
Similarly, we also get
\begin{multline*}
E_{\theta_1}(\eta_{\theta_1};\eta_{k - 1}) - E_{\theta_2}(\eta_{\theta_2};\eta_{k - 1}) \geq  E_{\theta_1}(\eta_{\theta_1};\eta_{k - 1}) - E_{\theta_2}(\eta_{\theta_1};\eta_{k - 1}) \\ 
= \left(\theta_1-\theta_2\right) \left[ -\frac{1}{\theta_1\theta_2} \left(R(\eta_{k - 1},\eta_{\theta_1}-\eta_{k - 1}) + \frac{\rho}{2h} \norm[L^2]{\eta_{\theta_1}-\eta_{k - 1}}^2  \right) + \frac{\rho}{2h}\norm[L^2]{\zeta_k}^2 \right].
\end{multline*}
Dividing by $\theta_1-\theta_2$ then gives us an upper and a lower bound on the Lipschitz constant of $\theta \mapsto E_{\theta}(\eta_{\theta}; \eta_{k - 1})$, which is uniform whenever $\theta_1,\theta_2 > \theta_0$ for a fixed $\theta_0>0$, thus proving that this map is locally Lipschitz. Furthermore, sending $\theta_1\to \theta_2$ we get that
\begin{equation}
\label{Etheta-der}
 \frac{d}{d\theta} E_\theta(\eta_\theta;\eta_{k - 1}) = -\frac{1}{\theta^2}R(\eta_{k - 1},\eta_{\theta}-\eta_{k - 1}) - \frac{1}{\theta^2} \frac{\rho}{ 2h} \norm[L^2]{\eta_\theta-\eta_{k - 1}}^2  +\frac{\rho}{2h}\norm[L^2]{\zeta_k }^2 
\end{equation}
holds for almost all $\theta \in (0,\tau]$.
 
Next, combining \eqref{eq:fundamentalTheorem}, \eqref{limsuptheta}, and \eqref{Etheta-der} yields
\begin{multline*}
% &\phantom{{}={}} 
E(\eta_{k}) + \tau R\left( \eta_{k - 1},  \tfrac{\eta_{k}- \eta_{k - 1}}{\tau}\right)
 + \frac{\tau \rho}{2h} \norm[L^2]{\tfrac{\eta_{k}-\eta_{k - 1}}{\tau}-\zeta_k}^2- \tau\inner[L^2]{f_k}{\tfrac{\eta_{k}-\eta_{k - 1}}{\tau}} - E(\eta_{k - 1})  \\
 \leq - \int_0^\tau R\left(\eta_{k - 1},\tfrac{\eta_{\theta}-\eta_{k - 1}}{\theta}\right) + \frac{\rho}{2h} \norm[L^2]{\tfrac{\eta_\theta-\eta_{k - 1}}{\theta}}^2 d\theta +\tau \frac{\rho}{2h}\norm[L^2]{\zeta_k}^2. 
\end{multline*}
After reordering, we end up with
\begin{multline*}
% &\phantom{{}={}} 
E(\eta_{k}) + \int_0^\tau R\left( \eta_{k - 1},  \tfrac{\eta_{k}- \eta_{k - 1}}{\tau}\right) + R\left(\eta_{k - 1},\tfrac{\eta_{\theta}-\eta_{k - 1}}{\theta}\right)d\theta \\+ \frac{\rho}{2h} \int_0^\tau \norm[L^2]{\tfrac{\eta_{k}-\eta_{k - 1}}{\tau}-\zeta_k}^2 + \norm[L^2]{\tfrac{\eta_\theta-\eta_{k - 1}}{\theta}}^2 d\theta
 \leq E(\eta_{k - 1}) + \tau \frac{\rho}{2h}\norm[L^2]{\zeta_k }^2 +  \tau\inner[L^2]{f_k}{\tfrac{\eta_{k}-\eta_{k - 1}}{\tau}}.
\end{multline*}
This telescopes into an estimate over all of $[0,s]$ (where without loss of generality we can assume $s$ to be a multiple of $\tau$), which, after dropping the term including $\zeta_k$ on the left hand side, can be rewritten as
\begin{multline} 
\label{eq:approxResult}
E(\overline \eta_{\tau}(s)) + \int_0^{s} R(\underline \eta_{\tau}(t),\partial_t \eta_{\tau}(t)) + R(\underline \eta_{\tau}(t),\beta_{\tau}(t))\,dt + \frac{\rho}{2h} \int_0^{s} \norm[L^2]{\beta_{\tau}(t)}^2\,dt \\
\leq E(\eta_0) + \frac{\rho}{2h} \int_0^{s} \norm[L^2]{\zeta(t)}^2\, dt+ \int_0^s \inner[L^2]{f(t)}{\partial_t \eta_{\tau}(t)}\,dt
\end{multline}
where $\overline \eta_{\tau}, \underline \eta_{\tau}$, and $\eta_{\tau}$ are defined as in \eqref{time-dep-interp} and $\beta_{\tau}$ is the so called De Giorgi-interpolation, defined by
\[
 \beta_{\tau}(t) \coloneqq \frac{\eta_{\theta}-\eta_{k - 1}}{\theta} \qquad \text{ where } \theta \in (0,\tau], t = \tau k +\theta \text{ and } \eta_\theta \text{ is defined as above.}
\]
Now we send $\tau \to 0$ again. Then, as shown in the proof of \Cref{AuxExistence}, eventually extracting a subsequence we have that the linear and affine approximations, as well as the time-derivative of the affine approximation converge to $\eta$ and $\partial_t \eta$, respectively.

Next, we notice that \eqref{eq:approxResult} yields a uniform bound on $\beta_{\tau}$ in $L^2((0, h); W^{1,2}(Q; \RR^n))$. Therefore,
eventually extracting a further subsequence (which we do not relabel), we can assume that $\beta_{\tau}$ converges weakly to a limit $\beta$. The next step will be identifying the limit $\beta$ with $\partial_t \eta$.

For this we note that $\eta_\theta$ fulfills an Euler-Lagrange inequality similar to that for $\eta_{k+1}$. Using the previous definition this inequality reads as
\[
DE(\overline \eta_{\tau})\langle \varphi \rangle + D_2R(\underline \eta_{\tau}, \beta_{\tau})\langle \varphi \rangle + \frac{ \rho}{h}\inner[L^2]{\beta_{\tau}-\zeta_{\tau} }{\varphi} \geq \rho\inner[L^2]{f}{\varphi}  
\]
for almost all $t \in [0,h]$ and admissible test functions $\varphi$. We now integrate the previous inequality and send $\tau \to 0$. The only non-trivial convergence here is that of the first term, which we have already dealt with before. Then finally we end up with
\begin{align*}
&\int_0^{s} DE(\eta) \langle\varphi\rangle + D_2R\left(\eta,\beta\right)\langle\varphi\rangle +   \frac{ \rho}{h}\inner[L^2]{\beta-\zeta }{\varphi} dt \geq \int_0^{s} \rho\inner[L^2]{f}{\varphi}  dt
\end{align*}
for all admissible test functions. In particular this includes all compactly supported test functions. Thus, for these test functions, subtracting from this the equation for the time-delayed solution \eqref{aux-par} we get
\begin{align*}
&\int_0^{s} D_2R\left(\eta,\beta\right)\langle\varphi\rangle -D_2R\left(\eta,\partial_t \eta \right)\langle\varphi\rangle + \frac{ \rho}{h}\inner[L^2]{\beta-\partial_t \eta}{\varphi} dt = 0
\end{align*}
which implies that $\beta = \partial_t \eta$, since $D_2R(\eta, \cdot)$ is the derivative of a convex function and thus a monotone operator.

Finally, with this at hand and in view of the lower-semicontinuity of the terms on the left hand side of \eqref{eq:approxResult}, we obtain the desired energy inequality.
\end{proof}

\begin{rmk}[Energy estimates in the presence of corners] \label{rmk:energyEst}
 The previous proof significantly differs from its ``regular boundary'' counterpart in \cite{Cesik2022}. In that context, we derived a nearly identical estimate by simply testing with $-\partial_t \eta$. This process relied on satisfying two distinct admissibility criteria. 
 
 The first requirement was having sufficient regularity of $\partial_t \eta$, given that $DE(\eta)$ is a distribution of order $-2$ and that the bounds obtained from the dissipation $R(\eta,\partial_t \eta)$ only give control on the first derivative of $\partial_t \eta$. We achieved this by introducing an extra regularization term in the energy. The same strategy could be applied in this scenario.
 
 The other, more crucial requirement was that of $\partial_t \eta$ pointing in an admissible direction. This way, we were able to test the variational inequality, yielding the correct sign for the energy inequality. An equivalent approach would have been to test the equation involving the contact force with opportunely defined test functions and showing in this way that the additional forcing term has the right sign. Indeed, since the force always points in the interior normal direction and movement can never cause an overlap, this was precisely the case in \cite{Cesik2022}.
 
 However, for a less smooth boundary this approach no longer works that seamlessly, as evidenced by the example given in \Cref{rmk:testSpaces}. There, we arrived at a contact force pointing in a direction not blocked by any other solid, i.e.\@ there can be evolutions of the solid where the two corners simply glide past each other in either direction. Consequently, testing the contact measure with $\partial_t \eta$ can potentially produce terms of either sign. This again is the same reason why we can only recover an inequality for $\Admr{\eta}{}$ and not for the larger set $\Adm{\eta}{}$, as the above example would contradict the latter. Of course, it is natural to conjecture that having such a passing solution would be incompatible with generating such a force in the approximation. However, formalizing this argument is non-trivial and as we have shown, this complication can be circumvented by a more abstract approach.
\end{rmk}

\subsection{Existence of the contact force}
Our goal now is to refine the inequality in \eqref{var-ineq} by recasting it as an equation involving a contact force. To achieve this, we introduce the following definition.

\begin{definition}[Solution with a contact force]
\label{sol-def-aux-measure}
Let $\eta_0 \in \E$ and $f \in L^2((0, h); L^2(Q; \RR^n))$ be given. We say that the pair
\[
\eta \in W^{1,2}((0, h); W^{1,2}(Q; \RR^n)) \cap L^\infty((0, h); \E ), \qquad \sigma \in L^2_{w^*}((0,h);M(\partial Q; \RR^n)),
\]
where $\sigma$ is a contact force for $\eta$ (see \Cref{cf-def}), is a \emph{solution with a contact force} to \eqref{aux-Pb} if $\eta(0) = \eta_0$ and 
\begin{equation}
\label{eqn:measure-eqn}
\int_0^h \left[DE(\eta(t)) + D_2R(\eta(t), \partial_t \eta(t))\right] \langle \varphi(t)\rangle\,dt = \int_0^h \langle \sigma(t),\varphi(t)\rangle \,dt + \int_0^h \langle f(t), \varphi(t) \rangle_{L^2}\,dt
\end{equation}
for all $\varphi \in C([0,h]; W^{2,p}_{\Gamma}(Q;\RR^n))$.
\end{definition}

The existence of solutions with a contact force for problem \eqref{aux-Pb} is established in the following theorem.
\begin{thm}
\label{existence-CF-quasi}
Under the assumptions of \Cref{AuxExistence}, there exists a solution with a contact force to \eqref{aux-Pb}. Moreover, the solution satisfies the energy inequality
\[
E(\eta(t)) + 2 \int_0^t R(\eta(s), \partial_t \eta(s))\, ds \le E(\eta(0)) + \int_0^t \langle f(s), \partial_t \eta(s)\rangle\, ds
\]
for $t \in [0, h]$ and the estimate
\begin{equation}\label{cforce-h-estimate}
\int_0^h \|\sigma(t)\|_{M(\partial Q;\RR^n)}^2 dt \leq C_0 \left(h + \|\partial_t\eta\|_{L^2((0,h);W^{1,2}(Q))}^2 +\|f\|_{L^2((0,h)\times Q)}^2\right),
\end{equation}
where $C_0$ is a constant that depends only on $E(\eta_0)$.
\end{thm}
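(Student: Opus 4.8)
The first two claims are essentially already available: the deformation $\eta$ produced by \Cref{AuxExistence} solves \eqref{var-ineq}, it satisfies the stated energy inequality by \Cref{thm:td-energy-ineq}, and the construction there delivers $\eta\in C([0,h];C^{1,\alpha}(\overline Q;\RR^n))$ (Aubin--Lions), which by the local-injectivity argument used in the proof of \Cref{compactness-closure-time} makes $C_\eta$ a closed subset of $[0,h]\times\overline Q$. So the real content is the construction of $\sigma$ and the estimate \eqref{cforce-h-estimate}. The plan is to introduce the \emph{defect functional} $\Lambda\colon C([0,h];W^{2,p}_\Gamma(Q;\RR^n))\to\RR$,
\[
\Lambda(\varphi)\coloneqq\int_0^h\big[DE(\eta(t))+D_2R(\eta(t),\partial_t\eta(t))\big]\langle\varphi(t)\rangle-\langle f(t),\varphi(t)\rangle_{L^2}\,dt ,
\]
which is bounded on $C([0,h];W^{2,p}_\Gamma)$ by \eqref{E5}, \eqref{eq:D2Rgrowthbounds} and $f\in L^2$, and which by \eqref{var-ineq} satisfies $\Lambda(\varphi)\ge0$ for all $\varphi\in C([0,h];\Admr{\eta}{})$, a convex cone.

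The heart of the matter is a coercivity-type trick using a uniformly interior vector field, the cone analogue of the corresponding step in \cite{Cesik2022}. Fix a smooth $\xi_\Gamma\ge0$ vanishing exactly on $\overline\Gamma$ and let $\tilde t_\eta$ be the smooth uniformly interior field of \Cref{smooth-unif-int}$(ii)$; since the contact set stays away from $\Gamma$ (by the hypotheses on $\eta_0|_\Gamma$, cf.\ \cite[Remark 2.3]{Cesik2022}), $\xi_\Gamma\tilde t_\eta\in C([0,h];\Admr{\eta}{})$. I would then show that for every $\varphi\in C([0,h];W^{2,p}_\Gamma)$, every time cutoff $\psi\in C([0,h];[0,1])$ equal to $1$ on the temporal support of $\varphi$, and every $\epsilon>0$,
\[
\pm\varphi+\big(C_1\,\|\varphi\|_{C^0([0,h]\times\partial Q)}\,\psi+\epsilon\big)\,\xi_\Gamma\tilde t_\eta\in C([0,h];\Admr{\eta}{}),
\]
with $C_1$ depending only on the opening angle of $\tilde t_\eta$ and on $\dist(C_\eta,\Gamma)$ (hence only on $E(\eta_0)$): at each contact point the value (or the difference of values) of $\varphi$ is dominated by a large enough positive multiple of $\tilde t_\eta$, which lies in $\interior\regtang{\eta}{t,x}$ because $L_{\vartheta,\tilde t_\eta}\subset\regtang{\eta}{t,x}$, and the self-contact condition in \eqref{SID} then follows from the convexity of the regular tangent cone. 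Plugging both signs into $\Lambda\ge0$, subtracting, and letting $\epsilon\to0$ yields $|\Lambda(\varphi)|\le C_1\|\varphi\|_{C^0([0,h]\times\partial Q)}\,\Lambda(\psi\xi_\Gamma\tilde t_\eta)$. With $\psi\equiv1$ this shows, first, that $\Lambda$ annihilates any $\varphi$ with vanishing boundary trace, so it factors through $\varphi\mapsto\varphi|_{[0,h]\times\partial Q}$, and second, that the induced functional on traces is $C^0$-bounded; after a routine mollification/extension from $\partial Q$ to $\overline Q$ it extends to a functional on $C([0,h]\times\partial Q;\RR^n)$, i.e.\ to a measure $\sigma\in M([0,h]\times\partial Q;\RR^n)$ satisfying \eqref{eqn:measure-eqn}.

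Next I would verify that $\sigma$ is a contact force for $\eta$. For the support: if $(t_0,x_0)\notin C_\eta$, take $\varphi\in C([0,h];W^{2,p}_\Gamma)$ supported in a neighbourhood of $(t_0,x_0)$ disjoint from $C_\eta$; since $\varphi$ vanishes at every contact point, $\pm\varphi+\epsilon\xi_\Gamma\tilde t_\eta\in C([0,h];\Admr{\eta}{})$ for all $\epsilon>0$, hence $\Lambda(\varphi)=0$ and $\sigma=0$ near $(t_0,x_0)$; thus $\supp\sigma\subset C_\eta$. For the normal direction I would invoke the characterization in \Cref{cforce-char}$(ii)$: given a $\regtang{\eta}{\cdot}$-valued test field $\varphi$ as in that statement, I may assume (since $\supp\sigma$ avoids $\Gamma$) that $\varphi$ vanishes near $\Gamma$, and by a Lusin/mollification reduction exactly as in the proof of \Cref{cforce-char} assume in addition that $\varphi$ is continuous in time and smooth in space; then $\varphi+\epsilon\xi_\Gamma\tilde t_\eta\in C([0,h];\Admr{\eta}{})$ for every $\epsilon>0$ (a point of the convex cone $\regtang{\eta}{t,x}$ plus an interior point is interior), so $\int\varphi\cdot d\sigma=\lim_{\epsilon\to0}\Lambda(\varphi+\epsilon\xi_\Gamma\tilde t_\eta)\ge0$, and \Cref{cforce-char}$(ii)$ gives the conclusion. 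The same $\pm$-argument applied to $\varphi\mapsto\varphi\circ\eta$, cut off away from $\Gamma$, yields the action--reaction principle at self-contact.

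For the estimate \eqref{cforce-h-estimate} I would rerun the bound of the second paragraph with $\psi$ concentrated near an arbitrary interval $J\subset[0,h]$ and estimate $\Lambda(\psi\xi_\Gamma\tilde t_\eta)$ using $\|DE(\eta(t))\|_{(W^{2,p})^*}\le C(E_0)$ (see \eqref{E5}), \eqref{eq:D2Rgrowthbounds}, and $\|\tilde t_\eta(t)\|_{C^{k_0}}\le C(E_0)$ (\Cref{smooth-unif-int}), obtaining $|\sigma|(J\times\partial Q)\le C(E_0)\int_J\big(1+\|\partial_t\eta(t)\|_{W^{1,2}}+\|f(t)\|_{L^2}\big)\,dt$ for every $J$, where $E_0\coloneqq\sup_tE(\eta(t))$ is controlled by $E(\eta_0)$ through the energy inequality. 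Consequently the time-marginal of $|\sigma|$ is absolutely continuous, $\sigma$ disintegrates as $\sigma=\int_0^h\sigma_t\,dt$ with $\|\sigma_t\|_{M(\partial Q)}\le C(E_0)\big(1+\|\partial_t\eta(t)\|_{W^{1,2}}+\|f(t)\|_{L^2}\big)$ for a.e.\ $t$—so $\sigma\in L^2_{w^*}((0,h);M(\partial Q;\RR^n))$ and has no concentrations in time—and squaring and integrating gives \eqref{cforce-h-estimate}. I expect the main obstacle to be the normal-direction part: passing from the one-sided bound $\Lambda\ge0$ on $\Admr{\eta}{}$—a cone defined through the \emph{interior} of the regular tangent cone and through $W^{2,p}$-regular, temporally continuous, $\Gamma$-vanishing perturbations—to the contact-force condition via \Cref{cforce-char}, which must be tested against all merely Borel-in-time, continuous-in-space $\regtang{\eta}{\cdot}$-valued fields. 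Bridging this gap relies on convexity and evolution-stability of the cones together with the mollification/extension machinery already set up for \Cref{cforce-char}; everything else is a cone-decorated repetition of the argument in \cite{Cesik2022}.
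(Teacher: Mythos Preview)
Your approach is sound in outline but takes a genuinely different route from the paper. The paper does \emph{not} work at the limit level: it constructs a contact force $\sigma_k$ for each discrete minimizer $\eta_k$ via a Hahn--Banach separation in $\RR\times C(\partial Q;\RR^n)$ (separating two convex sets built from the regular tangent cones, following \cite{Palmer2018}), assembles the piecewise-constant $\sigma_\tau$, obtains the $L^2$-in-time bound already at the discrete level by testing the discrete Euler--Lagrange \emph{equation} \eqref{sigma-k-equation} with $\xi_\Gamma\tilde t_{\eta_k}$, and only then passes to the limit using the compactness--closure \Cref{compactness-closure-time} and \Cref{cforce-l2intime}. Your defect-functional argument instead extracts $\sigma$ directly from the limiting variational inequality \eqref{var-ineq} via the interior-field domination trick. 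This is more intrinsic---it shows that \emph{any} solution of \eqref{var-ineq} carries a contact force, not only the one produced by the scheme---but it gives up the information that $\sigma$ is the weak$^*$ limit of the discrete $\sigma_\tau$, and it shifts the verification of support, normal direction and action--reaction from the ready-made closure theorem to a hands-on argument.

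One concrete point to repair: your claim that $C_1$ depends only on $\dist(C_\eta,\Gamma)$, and that this distance is controlled by $E(\eta_0)$, is not justified. Self-contact can occur with one preimage arbitrarily close to $\Gamma$ and the other far away (nothing in the hypotheses prevents a distant part of the solid from touching the clamped region), so $\xi_\Gamma$ may be small on $C_\eta$ and your domination constant blows up. The paper circumvents this by choosing a compact $K\subset\partial Q$ with $\xi_\Gamma\equiv 1$ on $K$ and $\eta|_{\partial Q\setminus K}$ injective, estimating $\|\sigma_k\|_{M(K)}$ via the interior-field test, and then invoking action--reaction to bound the contribution on $\partial Q\setminus K$ by that on $K$. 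In your order of argument action--reaction is derived \emph{after} the $C^0$-bound, so this creates a circularity; you would need to restructure (prove action--reaction first---your observation that $(\phi\circ\eta)(x)-(\phi\circ\eta)(y)=0$ at self-contact makes this cheap once the cutoff is handled correctly---and only then deduce the full total-variation estimate).
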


\begin{proof}
Here we continue to use the notation introduced in the proof of \Cref{AuxExistence}. In particular, we recall that $\eta_k$ (see  \eqref{etak-def}) is a minimizer for $\J_k$ (see \eqref{Jk-def}). We divide the proof into a several steps.
\newline
\emph{Step 1 (Lagrange multiplier is a measure):} In this first step we use the argument described in \cite[Theorem 3.1]{Palmer2018} to show that there is a contact force for $\eta_k$, namely $\sigma_k\in M(\partial Q;\RR^n)$, such that
\begin{equation}
\label{sigma-k-equation}
D\mathcal J_k(\eta_k)\langle \varphi \rangle - \langle \sigma_k,\varphi \rangle = 0
\end{equation}
for all $\varphi\in W^{2,p}_{\Gamma}(Q;\RR^n)$. We begin by letting
  \begin{align*}
M^-_{\eta_k} & \coloneqq \{(\ell,w) \in \RR \times C(\partial Q;\RR^n): \ell \leq 0 \text{ and } w(x) \in \interior \regtang{\eta_k}{x} \ \forall x \in \contact{\eta_k}\}, \\
M^+_{\eta_k} & \coloneqq \{ (\ell,w)\in\RR \times C(\partial Q;\RR^n): \exists \varphi \in \mathcal A_{\eta_k}^{w} \text{ s.t.\@ } D\mathcal{J}_k(\eta_k) \langle\varphi \rangle \leq \ell\},
\end{align*}
where for $w \in C(\partial Q;\RR^n)$
\begin{multline*}
\mathcal A_{\eta_k}^{w} \coloneqq \{\varphi \in W^{2,p}_{\Gamma}(Q;\RR^n) : \varphi(x) - \varphi(y) - w(x) + w(y) \in \interior \regtang{\eta_k}{x}-\interior \regtang{\eta_k}{y} \\ \text{ if } \eta_k(x) = \eta_k(y) \text{ with } x \neq y, \text{ and } \varphi(x)-w(x)\in\interior \regtang{\eta_k}{x} \text{ if } \eta_k(x)\in \partial \Omega \}.
\end{multline*}
As one can readily check, the sets $M^+_{\eta_k}$ and $M^-_{\eta_k}$ are convex as a consequence of the convexity of the regular tangent cones. Moreover, $M_{\eta_k}^{-}\neq \emptyset$ by \Cref{smooth-unif-int} $(i)$ and has nonempty interior in view of \Cref{regtang-char}.

Next, we claim that $M^+_{\eta_k}\cap \interior M^-_{\eta_k} = \emptyset$. Indeed, notice that if $(\ell,w) \in \interior M_{\eta_k}^-$ then
$\mathcal A_{\eta_k}^w \subset \Admr{\eta}{}$, and therefore (see \eqref{EL-ineq-k}) we have that
\[
D\mathcal{J}_k(\eta_k) \langle \varphi \rangle \ge 0 > \ell.
\]
This shows that $(\ell,w) \notin M_{\eta_k}^+$, thus proving the claim. Therefore, by the Hahn-Banach theorem there exists a separating hyperplane, that is, a pair $(0,0)\neq(\lambda_0,\sigma_0)\in \RR\times M(\partial Q;\RR^n)$ such that
\begin{align*}
\lambda_0 \ell+\left\langle\sigma_0, w\right\rangle & \geq 0, \qquad \text{ for all } (\ell, w) \in M^{-}_{\eta_k}, \\
\lambda_0 \ell+\left\langle\sigma_0, w\right\rangle & \leq 0, \qquad 
\text{ for all } (\ell, w) \in M_{\eta_k}^{+}.
\end{align*}
Notice that if $w \in C(\partial Q;\RR^n)$ is such that $C_{\eta_k} \cap \supp w = \emptyset$, then $(0,w) \in M_{\eta_k}^+\cap M_{\eta_k}^-$ and therefore $\langle \sigma_0, w\rangle = 0$. This shows $\supp \sigma_0 \subset C_{\eta_k}$. Thus, by \Cref{cforce-char} we have that $\sigma_0$ is a contact force for $\eta_k$.

Let $\xi_{\Gamma}$ be given as in \Cref{prop:TErecoveryLocal} and notice that $(2 D \mathcal J_k(\eta_k)\langle \tilde t_{\eta_k} \xi_{\Gamma} \rangle , \tilde t_{\eta_k}\xi_{\Gamma}) \in M^+_{\eta_k}$ by the definition of $M^{+}_{\eta_k}$.
Therefore
\begin{equation}
\lambda_0 2 D \mathcal J_k(\eta_k)\langle \tilde t_{\eta_k} \xi_{\Gamma} \rangle + \langle \sigma_0, \tilde t_{\eta_k} \xi_{\Gamma} \rangle \le 0.
\end{equation}
From this we see that $\lambda_0 < 0$. Indeed, since $\lambda_0 = 0$ implies $\sigma_0 \neq 0$ (by the Hahn-Banach theorem) and since $\langle \sigma_0,  \tilde t_{\eta_k}\xi_{\Gamma} \rangle > 0$ in view of the fact that $\tilde t_{\eta_k}$ is a uniformly interior field, then necessarily we must have that $\lambda_0 \neq 0$. On the other hand, if $\lambda_0 > 0$ then by taking $\ell > \max(0,2 D \mathcal J_k(\eta_k)\langle \tilde t_{\eta_k} \xi_{\Gamma}\rangle)$ we get that $(\ell, \tilde t_{\eta_k}\xi_{\Gamma}) \in M^+_{\eta_k}$. However, in this case we readily reach a contradiction by noticing that $\lambda_0 \ell + \langle \sigma_0,  \tilde t_{\eta_k}\xi_{\Gamma} \rangle > 0$.
 
Finally, set $\sigma_k \coloneqq -\sigma_0/\lambda_0$. Then $\sigma_k$ is a contact force for $\eta_k$ and a simple computation shows that equation \eqref{sigma-k-equation} is satisfied.
\newline
\emph{Step 2 (bound on the measure):}
For $\sigma_k$ as above, let $\sigma_{\tau}$ be defined via
\begin{equation}
\sigma_\tau(t) \coloneqq \sigma_k,\qquad t\in [\tau (k-1), \tau k).
\end{equation}
By the assumptions on $\eta_0|_\Gamma$ and the local injectivity of $\eta$, we can pick a compact subset $K\subset \partial Q$ such that $\eta(t)|_{\partial Q \setminus K}$ is always injective. Then by the action-reaction principle, estimating $\|\sigma_k \|_{M(K;\RR^n)}$ is enough to estimate $\|\sigma_k\|_{M(\partial Q;\RR^n)}$. Wlog.\ we can choose $\xi_\Gamma$ from before such that $\xi_\Gamma(x) = 1$ for all $x \in K$.
Let us use $\xi_\Gamma\tilde t_{\eta_k}$ with $\tilde t_{\eta_k}\in C^{2}(\overline Q;\RR^n)$ from \Cref{smooth-unif-int} $(i)$ as a test function.  This gives us (as $d\sigma_k = g_k\,d|\sigma_k|$, see \Cref{cf-def}), denoting $\alpha\coloneqq \sin \vartheta$ from \Cref{unif-int-def},
\begin{equation}
-D\mathcal J_k(\eta_k)\langle \xi_\Gamma \tilde t_{\eta_k}\rangle = \langle \sigma_k, \xi_\Gamma\tilde t_{\eta_k}\rangle  =  \int_{\partial Q} \xi_\Gamma \tilde{t}_{\eta_k}\cdot g_k d|\sigma_k| \geq \alpha \int_{K} |\tilde t _{\eta_k}| |g_k| d|\sigma_k| = \alpha \|\sigma_k\|_{M(K;\RR^n)}.
\end{equation}
Thus	
\begin{equation}
\|\sigma_k\|_{M(\partial Q;\RR^n)} \leq -\frac{2}{\alpha} \left( DE(\eta_k)\langle \tilde t _{\eta_k}\rangle +DR\left(\eta_{k-1}, \frac{\eta_k-\eta_{k-1}}{\tau} \right) \langle \tilde t_{\eta_k}\rangle - \langle f_k,\tilde t_{\eta_k}\rangle \right)
\end{equation}
Multiplying this by $\psi\in C([0,h];\RR^+)$ and integrating over $t \in (0,h)$
\begin{multline*}
\int_0^h \|\sigma_\tau(t)\|_{M(\partial Q;\RR^n)} \psi(t) dt \\ \leq -\frac{2}{\alpha} \int_0^h \left( DE(\overline \eta_\tau(t))\langle \tilde t_{\overline \eta_\tau}(t) \rangle +DR\left(\underline \eta_\tau(t),  \partial_t \eta_\tau (t) \right) \langle \tilde t_{\overline \eta_\tau}(t)\rangle - \langle \overline f_\tau(t),\tilde t_{\overline \eta_\tau}(t)\rangle  \right)\psi(t)\,dt.
\end{multline*}
Let us estimate all terms on the right hand side. First,
\begin{equation}
\left|\int_0^h DE(\overline \eta_\tau(t))\langle \tilde t_{\overline \eta_\tau}(t) \rangle \psi(t)\, dt \right|\leq \|DE(\overline \eta_\tau)\|_{L^\infty((W^{2,p})^*)} \| \tilde t_{\overline \eta_\tau}\|_{L^\infty(W^{2,p})} \sqrt h \|\psi\|_{L^2((0,h))}
\end{equation}
and by the energy inequality \Cref{thm:td-energy-ineq} and by \eqref{E5}, we have $\|DE(\overline \eta_\tau)\|_{L^\infty((W^{2,p})^*)}$ bounded by a constant $C_{E_0}$ depending only on $E_0$.
Next, using \eqref{eq:D2Rgrowthbounds} we get
\begin{align}
\left|\int_0^h DR\left(\underline \eta_\tau(t), \partial_t \eta_\tau (t) \right) \langle \tilde t_{\overline \eta_\tau}(t)\rangle \psi(t)\, dt \right| & \leq \int_0^h  C_{E_0}\|\partial_t \eta_\tau(t)\|_{W^{1,2}} \|\tilde t _{\overline \eta_\tau}(t)\|_{W^{1,2}} \psi(t) \,dt \\
& \leq C_{E_0} \|\partial_t\eta_\tau\|_{L^2(W^{1,2})}\|\tilde t_{\overline\eta_\tau}\|_{L^\infty(W^{1,2})} \| \psi\|_{L^2((0,h))}
\end{align}
and finally 
\begin{equation}
\int_0^h - \langle \overline f_\tau(t),\tilde t_{\overline \eta_\tau}(t)\rangle\psi(t)\, dt \leq \|f\|_{L^2(L^2)} \| \tilde t_{\overline \eta_\tau}\|_{L^\infty(L^2)} \| \psi\|_{L^2((0,h))}.
\end{equation}
So altogether, given that by \Cref{smooth-unif-int} $(i)$ $\tilde t_{\overline \eta_\tau}$ is bounded in all the norms above by a constant, that 
\begin{equation}
\int_0^h \|\sigma_\tau(t)\|_{M(\partial Q;\RR^n)} \psi(t) dt \leq  \frac{1}{\alpha} C_{E_0} (\sqrt h + \|\partial_t\eta_\tau\|_{L^2(W^{1,2})}+ \|f\|_{L^2(L^2)})\|\psi\|_{L^2((0,h))}
\end{equation}
Taking a supremum over $\psi$ such that $\|\psi\|_{L^2((0,h))}\leq 1$ yields

\begin{equation}\label{sigma-tau-estimate}
\|\sigma_\tau\|_{L^2((0,h);M(\partial Q;\RR^n))} \leq \frac{1}{\alpha} C_{E_0}( \sqrt h + \|\partial_t\eta_\tau\|_{L^2((0,h);W^{1,2}(Q))} + \|f\|_{L^2((0,h)\times Q)}).
\end{equation}
Hence, we have proved that $\sigma_\tau\in L^2((0,h);M(\partial Q;\RR^n))$ and moreover the bound is independent of $\tau$, since by \Cref{thm:td-energy-ineq} $\|\partial_t\eta_\tau\|_{L^2((0,h);W^{1,2}(Q))}$ is bounded independently of $\tau$.

\emph{Step 3 (passing to the limit):}
We have from Step 1 that 
\begin{equation*}
\int_0^h DE(\overline \eta_\tau) \langle \varphi \rangle + DR(\underline\eta_\tau ,\partial_t\eta_\tau) \langle \varphi\rangle  - \langle \sigma_\tau , \varphi \rangle \,dt = \int_0^h \langle f_\tau, \varphi \rangle \,dt ,\quad \varphi\in W^{2,p}_{\Gamma}(Q;\RR^n).
\end{equation*}
By the estimate \eqref{sigma-tau-estimate}, using the compactness of contact forces from \Cref{compactness-closure-time} we have that for a subsequence of $\tau \to 0$ (which we do not relabel) we have 
\begin{equation*}
\sigma_\tau \weakstarto \sigma \text{ in } M([0,h]\times \partial Q),
\end{equation*}
where $\sigma$ is a contact force for $\eta$. Moreover it satisfies the equation (passage to the limit in all of the other terms has already been established in the proof of \Cref{AuxExistence})
\begin{equation*}
\int_0^h DE(\eta) \langle \varphi \rangle + DR(\eta,\partial_t\eta) \langle \varphi \rangle - \langle \sigma , \varphi \rangle \,dt = \int_0^h \langle f, \varphi \rangle \,dt
\end{equation*}
by a density argument for all $\varphi\in C([0,h];W^{2,p}_{\Gamma}(Q;\RR^n))$. Moreover by \eqref{sigma-tau-estimate} and \Cref{cforce-l2intime} we have also $\sigma\in L^2_{w^*}((0,h);M(\partial Q;\RR^n))$ with the same estimate as \eqref{sigma-tau-estimate}, that is
\begin{equation*}
\|\sigma\|_{L^2_{w^*}((0,h);M(\partial Q;\RR^n))} \leq \frac{1}{\alpha} C_{E_0}\left(\sqrt h + \|\partial_t\eta\|_{L^2((0,h);W^{1,2}(Q))} + \|f\|_{L^2((0,h)\times Q)}\right),
\end{equation*}
finishing the proof.
\end{proof}

\section{Proof of the main result}
\label{sec:inertial}
In this section, we adapt the several steps in the proof of Theorem 2.4 in \cite[Section 5]{Cesik2022} to our more general situation. Thus, we will again only sketch the arguments that are identical in the case of regular boundaries and mainly focus on the differences. To make the comparison easier, we keep the same structure.

\begin{proof}[Proof of \Cref{main-thm-VI}] 
\emph{Step 1:} Recall that by using a different approach for the energy inequality, we were able to obtain the results of the previous section without relying on additional regularization terms. Therefore, in the current framework, there is no need to regularize the initial data. We can thus skip this step.
\newline
\emph{Step 2:} For fixed $h$, on each interval $[lh,(l+1)h]$, for $l\in \NN$, we now iteratively apply \Cref{AuxExistence} with
\begin{equation}\label{Rtilde-ftilde}
 \tilde{R}^{(h)}(\eta, b) \coloneqq R(\eta, b) + \frac{\rho}{2h}\|b\|^2_{L^2} \qquad \text{ and } \qquad \tilde{f}(t) \coloneqq f(t) + \frac{\rho}{h}\partial_t \eta(t - h)
\end{equation}
in place of $R$ and $f$. Here we implicitly set $\partial_t \eta(t) \coloneqq \eta^*$ for $t \leq 0$. 

Note that on each interval, the energy inequality for the previous interval gives us a control on $\partial_t \eta$ which guarantees that $\tilde{f}$ is well defined in the next interval. It is immediate to see that if $R$ is given as in \eqref{R3}, then $\tilde{R}^{(h)}$ satisfies \eqref{R3q}; in particular, this implies that all the assumptions of \Cref{AuxExistence} are satisfied for all $l$.

Next, we observe that the piecewise constant function defined on the entire interval $[0, T]$ obtained by gluing the solutions on the sub-intervals $[lh,(l+1)h]$, namely $\eta^{(h)}$, is a weak solution to
\begin{equation} 
\label{eq:tdEquation} 
\rho \frac{\partial_t \eta^{(h)}(t) - \partial_t \eta^{(h)}(t-h)}{h} + DE(\eta^{(h)}(t))+D_2 R(\eta^{(h)}(t), \partial_t\eta^{(h)}(t)) = f(t),
\end{equation}
for each $h>0$.
\newline
\emph{Step 3:} A closer look at the energy inequalities (see \Cref{thm:td-energy-ineq}) for each interval $[lh,(l+1)h]$ reveals that the terms for kinetic and potential energy on each side of the equation cancel when summed up. In turn, we find that
\begin{multline} 
\label{eq:Fulltd-energy-ineq}
  E(\eta^{(h)}(s)) + \fint_{s-h}^s \frac{\rho}{2} \norm[L^2]{\partial_t \eta^{(h)}(t)}^2 dt + \int_0^s 2 R(\eta^{(h)}(t),\partial_t \eta^{(h)}(t))\,dt \\ \leq E(\eta_0) + \frac{\rho}{2}\norm[L^2]{\eta^*}^2  + \int_0^s \inner[L^2]{f(t)}{\partial_t \eta^{(h)}(t)}\,dt
\end{multline}
holds for all $s \in [0,T]$.  From this, but also using the properties of $E$ and $R$, one easily derives uniform bounds (with respect to $h$) for
\begin{equation}
 \eta^{(h)} \in L^\infty((0,T);W^{2,p}(Q;\RR^n)) \qquad \text{ and } \qquad \partial_t \eta^{(h)} \in  L^2((0,T);W^{1,2}(Q;\RR^n)).
\end{equation}
In particular, this implies the existence of a limit deformation $\eta$ and a subsequence such that
\begin{equation}
\label{etah-to-eta-1}
\arraycolsep=1.4pt\def\arraystretch{1.6}
\begin{array}{rll}
\eta^{(h)} & \overset{\ast}{\rightharpoonup} \eta & \text{ in } L^{\infty}((0, T); W^{2, p}(Q; \RR^n)), \\
\eta^{(h)} & \rightharpoonup \eta & \text{ in } W^{1,2}((0, T); W^{1,2}(Q; \RR^n)).
\end{array}
\end{equation}
\emph{Step 4:} This step is dedicated to improving the convergences in \eqref{etah-to-eta-1}. This is necessary in order to pass to the limit as $h \to 0$ in the term that contains $DE(\eta^{(h)})$ in the weak formulation for \eqref{eq:tdEquation}. To this end, we first consider the auxiliary function
\begin{equation*}
 b^{(h)}(t) \coloneqq \fint^{t+h}_{t} \partial_t \eta^{(h)}(s)\,ds. 
\end{equation*}
From the bounds on $\partial_t \eta^{(h)}$ we quickly derive that $b^{(h)} \in L^2((0,T);W^{1,2}(Q;\RR^n))$. Additionally, noting that using \eqref{eq:tdEquation} we have that
\begin{equation}\label{dtb-negativespace}
 \partial_t b^{(h)}(\cdot) = \frac{\partial_t \eta^{(h)}(\cdot+h)-\partial_t \eta^{(h)}(\cdot)}{h} \in \left(L^2((0,T); W^{2,p}_0(Q;\RR^n)) \right)^*
\end{equation}
admits a uniform bound, we can use the Aubin-Lions theorem to prove that $b^{(h)}$ converges strongly (eventually extracting a further subsequence) in $L^2((0,T) \times Q;\RR^n)$ and we can further identify its limit with $\partial_t \eta$.

Now note that using our definitions for $\tilde{R}^{(h)}$ and $\tilde{f}$, the solution $\eta^{(h)}$ satisfies
 \begin{multline} \label{weaketah7}
\int_0^T DE(\eta^{(h)}(t))\langle  \varphi(t) \rangle + D_2R(\eta^{(h)}(t), \partial_t \eta^{(h)}(t))\langle  \varphi(t)\rangle\,dt - \rho \langle \eta^*, \varphi(0)\rangle_{L^2} \\ + \int_0^T \rho \left\langle \tfrac{\partial_t \eta^{(h)}(t) -\partial_t \eta^{(h)}(t-h)}{h}, \varphi(t) \right\rangle_{L^2} \,dt = \int_{[0,T]\times \partial Q} \varphi(t,x)\cdot d \sigma(t,x) + \int_0^T \langle f(t), \varphi(t) \rangle_{L^2}\,dt
 \end{multline}
for all $\varphi \in C([0, T]; W^{2, p}_{\Gamma}(Q; \RR^n))$.
 
 Now let $\tilde t_{\eta^{(h)}}$ be given as in \Cref{smooth-unif-int}, $\xi_{\Gamma} \in C^\infty(Q; [0,1])$ be a cutoff that satisfies $\xi_{\Gamma}(x) = 0$ for $x \in \Gamma$ and $\xi_{\Gamma}(x) > 0$ for $x \in \overline{Q} \setminus \overline{\Gamma}$, and let $\psi \in L^\infty((0,T))$. We then use $\xi_{\Gamma} \tilde t_{\eta^{(h)}} \psi$ as a test function in \eqref{weaketah7}. The rest of this step then proceeds precisely as in \cite{Cesik2022} and we will thus only highlight some of the details.

Note that $\tilde t_{\eta^{(h)}} \in L^\infty((0,T); C^{2}(\overline Q;\RR^n))$. From this we obtain 
\begin{multline}
\int_0^T \|\sigma^{(h)}(t)\|_{M(\partial Q;\RR^n)}\psi(t) dt \leq \frac{1}{\alpha}\int_0^T\biggr\{\left[DE(\eta^{(h)}(t)) + D_2R(\eta^{(h)}(t), \partial_t \eta^{(h)}(t))\right] \langle \xi_{\Gamma} \tilde t_{\eta^{(h)}}(t)\rangle \\ + \rho \left \langle \tfrac{\partial_t \eta^{(h)}(t)-\partial_t\eta^{(h)}(t-h)}{h}, \xi_{\Gamma} \tilde t_{\eta^{(h)}}(t)\right \rangle_{L^2} - \langle f(t), \xi_{\Gamma} \tilde t_{\eta^{(h)}}(t) \rangle_{L^2}\biggr\}\psi(t)\,dt. \label{toestimate}
\end{multline}
Taking $\psi = 1$, we can then proceed exactly as in the corresponding step of Theorem 2.4 in \cite{Cesik2022} and estimate all the terms on the right-hand side of \eqref{toestimate}. We recall here that the difference quotient used to approximate the inertial term can be rewritten as
\begin{multline} \label{eq:inertialForceEst}
  \int_0^T \rho \left \langle \tfrac{\partial_t \eta^{(h)}(t)-\partial_t\eta^{(h)}(t-h)}{h}, \xi_{\Gamma} \tilde t_{\eta^{(h)}}(t)\right \rangle_{L^2}dt = 
 \int_0^{T-h} \rho \left \langle \partial_t \eta^{(h)}(t), \xi_{\Gamma} \tfrac{\tilde t_{\eta^{(h)}}(t)-\tilde t_{\eta^{(h)}}(t+h)}{h}\right \rangle_{L^2}dt \\ + \int_{T - h}^T \frac{\rho}{h} \langle \partial_t \eta^{(h)}(t), \xi_{\Gamma} \tilde t_{\eta^{(h)}}(t) \rangle_{L^2}\,dt - \int_0^h \frac{\rho}{h} \langle \partial_t \eta^{(h)}(t - h), \xi_{\Gamma} \tilde t_{\eta^{(h)}}(t) \rangle_{L^2}\,dt,
\end{multline}
and that the first term on the right-hand is controlled by the bounds on $\partial_t \tilde{t}_{\eta^{(h)}}$ obtained in \Cref{smooth-unif-int}. Together with the other estimates, this implies that $\sigma^{(h)} \in L^1_{w^*}((0,T); M(\partial Q; \RR^n))$. Consequently, \Cref{compactness-closure-time} guarantees the existence of a subsequence (which we do not relabel) such that $\sigma^{(h)}\weakstarto \sigma$ in $M([0,T] \times \partial Q;\RR^n)$, where $\sigma \in M([0,T] \times \partial Q;\RR^n)$ is a contact force for $\eta$ and satisfies the action-reaction principle.

Next we consider the case where $\psi$ is the characteristic function of a small interval $[t_0,t_0+\delta]$. Observe that $\tilde{t}_{\eta^{(h)}}$ can be constructed in such a way that it is piecewise constant in time and that with this choice, reasoning as in \eqref{eq:inertialForceEst}, we have that 
\begin{multline*}
 \int_{t_0}^{t_0+\delta} \rho \left \langle \tfrac{\partial_t \eta^{(h)}(t)-\partial_t\eta^{(h)}(t-h)}{h}, \xi_{\Gamma} \tilde t_{\eta^{(h)}}(t)\right \rangle_{L^2} dt \\
= \fint_{t_0+\delta-h}^{t_0+\delta} \rho \left \langle \partial_t \eta^{(h)}(t), \xi_{\Gamma} \tilde t_{\eta^{(h)}}(t)\right \rangle_{L^2} dt 
- \fint_{t_0-h}^{t_0} \rho \left \langle \partial_t \eta^{(h)}(t), \xi_{\Gamma} \tilde t_{\eta^{(h)}}(t)\right \rangle_{L^2} dt
\end{multline*}
which can easily be bounded uniformly by the energy-inequality and in fact arbitrarily small, by limiting the support of $\tilde{t}_{\eta^{(h)}}$ to a smaller neighborhood of $\partial Q$. Together with the fact that the estimates of the other terms depend on $\norm[L^2]{\psi}$ and thus vanish when $\delta \to 0$, this implies the absence of any concentrations in time. Again, we refer the reader to \cite{Cesik2022} for more details, as the proof of these estimates is unchanged.
% In view of \eqref{dtb-negativespace} and \Cref{smooth-unif-int}, we can now estimate the ``time-delayed term'' as
% \begin{multline*}
% \int_0^T \rho\left\langle \tfrac{\partial_t\eta^{(h)}(t)-\partial_t\eta^{(h)}(t-h)}{h}, \tilde t_{\eta^{(h)}} (t)\right\rangle_{L^2} \psi(t) \,dt \\ \leq \rho \norm[L^2 (W^{-2,p'})]{\tfrac{\partial_t\eta^{(h)}(\cdot)-\partial_t\eta^{(h)}(\cdot-h)}{h}} \norm[L^\infty (W^{2,p})]{\tilde t_{\eta^{(h)}}}\norm[L^2]{\psi} \leq C \|\psi\|_{L^2}.
% % \end{multline*}
% Notice that all the remaining terms can be estimated as in the proof of \Cref{existence-CF-quasi}. Therefore, we have shown that $\{\sigma^{(h)}\}_h$ is bounded in $L^2((0,T);M(\partial Q;\RR^n))$ and hence also in $M([0,T] \times \partial Q;\RR^n)$.  By \Cref{cforce-l2intime}, we also obtain $\sigma \in L^2_{w^*}((0,T);M(\partial Q;\RR^n))$ \ant{needs to be replaced by the $L^1$ + no concentrations estimate}, and the weak$^{*}$ convergence of measures implies that we can pass to the limit with the term that contains $\sigma^{(h)}$.
% 
\newline
\emph{Step 5:} With this in hand, using \eqref{E6}, we can conclude that 
\begin{equation*}
 0 \leq \liminf_{h\to 0} \int_0^T [DE(\eta^{(h)}) - DE(\eta)]\langle (\eta^{(h)}-\eta) \psi \rangle\,dt
\end{equation*}
for all $\psi \in C^{\infty}_{\Gamma}(Q; [0, 1])$. Again, as in \Cref{AuxExistence}, we do not need to deal with the regularization, making this argument more straightforward when compared to our proof in \cite{Cesik2022}. The only additional term we get after integrating by parts is
\begin{align*}
 - \liminf_{h\to 0} \int_0^T \inner[L^2]{\tfrac{\partial_t \eta^{(h)}(\cdot+h)-\partial_t \eta^{(h)}(\cdot)}{h}}{\psi (\eta^{(h)}-\eta) } dt = \liminf_{h \to 0} \int_0^T \inner[L^2]{b^{(h)}}{\psi \partial_t (\eta^{(h)}-\eta) }\,dt,
\end{align*}
which converges to zero. Moreover, we have that $\partial_t \eta^{(h)} \rightharpoonup \partial_t \eta$ and $b^{(h)} \to \partial_t \eta$, respectively, in $L^2((0,T) \times Q;\RR^n)$. Combining these facts with \eqref{E6} we then conclude that $\eta^{(h)}(t) \to \eta(t)$ in $W^{2, p}(K; \RR^n)$ for all $K$ compactly contained in $\overline{Q}$ with $\dist(K, \Gamma) > 0$ and for almost all $t \in [0, T]$.
\newline
\emph{Step 6:} The previously shown convergences now allow us to pass to the limit with all terms in the equation, including the one involving the contact force. This readily implies the existence of a solution to the full problem with a contact force and therefore completes the proof.
\end{proof}

\bibliographystyle{alpha}
\bibliography{biblio}
\end{document}